\documentclass[a4paper,11pt]{amsart}

\usepackage{amssymb,amsmath, pinlabel, mathabx, amscd, tikz, xcolor}

  \usepackage{enumitem}
 \usepackage{lineno} 
  \usepackage[normalem]{ulem} 
 

 \usepackage[pagebackref]{hyperref}
  \hypersetup{
colorlinks=true,
linkcolor=cyan,
citecolor=cyan
}

 
\usepackage{mathtools}
\usepackage[pagebackref]{hyperref}

\input xy
\xyoption{all}

\usepackage{color}

\headheight 0.5cm     
\evensidemargin 0cm         
\oddsidemargin 0.25cm   
\textwidth 16cm

\newtheorem{theorem}{Theorem}[section]
\newtheorem{lemma}[theorem]{Lemma}
\newtheorem{proposition}[theorem]{Proposition}

\theoremstyle{definition}
\newtheorem{definition}[theorem]{Definition}

\newtheorem{example}[theorem]{Example}

\setcounter{equation}{0}

\newcommand{\gras}[1]{{\mathbb #1}} 

\newcommand{\N}{\gras{N}}
 
\newcommand{\Q}{\gras{Q}} 
\newcommand{\R}{\gras{R}} 
\newcommand{\C}{\gras{C}}

\newcommand{\cB}{\mathcal{B}}

\newcommand{\cE}{\mathcal{E}}
\newcommand{\cF}{\mathcal{F}}
\newcommand{\cG}{\mathcal{G}}
\newcommand{\cH}{\mathcal{H}}

\newcommand{\cO}{\mathcal{O}}
\newcommand{\cP}{\mathcal{P}}

\newcommand{\cV}{\mathcal{V}}

\newcommand{\m}{\frak{m}}

\def\elem(#1,#2){  \{ \frac{#1}  {\overline {\ #2\ }} \} }

\title[Ultrametrics and surface singularities]{Ultrametrics and surface singularities}
\author{Patrick Popescu-Pampu}
   \address{Univ. Lille, CNRS, UMR 8524 - Laboratoire Paul Painlev\'e, F-59000 Lille, 
         France.}
   \email{patrick.popescu-pampu@univ-lille.fr}
\date{13 July 2020}

\subjclass[2010]{14B05 (primary), 14J17, 32S25}
\keywords{Arborescent singularity,  Birational geometry, Block, 
  Cut-vertex, Eggers-Wall tree, Intersection number, Newton-Puiseux series, Normal surface singularity, 
  Resolution of singularities, Tree, Ultrametric, Valuation}

    \makeindex

\begin{document}

{\bf To appear in the volume \emph{Introduction to Lipschitz geometry of singularities}, \\
   edited by W. Neumann and A. Pichon, Springer, 2020 or 2021.}
   \bigskip

\begin{abstract}
     The present lecture notes give an introduction to works of Garc\'{\i}a Barroso, 
     Gonz\'alez P\'erez, Ruggiero and the author. The starting point 
     of those works is a theorem of P\l oski, stating that one defines an ultrametric 
     on the set of branches drawn on a smooth surface singularity by associating to any pair 
     of distinct branches the quotient of the product of their multiplicities by their 
     intersection number. We show how to construct ultrametrics 
     on certain sets of branches drawn on any normal surface singularity from their 
     mutual intersection numbers and how to interpret the associated rooted trees  
     in terms of the dual graphs of adapted embedded resolutions.
     The text begins by recalling basic properties 
     of intersection numbers and multiplicities on smooth surface singularities and 
     the relation between ultrametrics on finite sets and rooted trees. 
     On arbitrary normal surface singularities one has to use 
     Mumford's definition of intersection numbers of curve singularities drawn on them, 
     which is also recalled. 
\end{abstract}

\maketitle

\tableofcontents

\section*{Introduction}
\label{sec:intro}

This paper is an expansion of my notes 
prepared for the course with the same title given  
at the \emph{International school on singularities and Lipschitz geometry},  
which took place in Cuernavaca (Mexico) from June 11th to 22nd 2018. 
\medskip

If $S$ denotes a \emph{normal surface singularity}, that is, a germ of normal complex 
analytic surface, a \emph{branch} on it is an irreducible germ of analytic curve contained in $S$. 
In his 1985 paper \cite{P 85}, Arkadiusz P\l oski proved that if one associates 
to every pair of distinct branches on 
the singularity $S=(\C^2, 0)$ the quotient $\dfrac{A \cdot B}{m(A) \cdot m(B)}$ 
of their intersection number by 
the product of their multiplicities, then for every triple of pairwise distinct branches, two of 
those quotients are equal and the third one is not smaller than them. An equivalent formulation is
that the inverses $\dfrac{m(A) \cdot m(B)}{A \cdot B}$ of the previous quotients 
define an \emph{ultrametric} on the set of branches on $(\C^2, 0)$. 

Using the facts  
that the multiplicity of a branch is equal to its intersection number with a smooth branch $L$ 
transversal to it, and that a given function is an ultrametric on a set if and only if 
it is so in restriction to all its finite subsets, one deduces that P\l oski's theorem is 
a consequence of:

\medskip
\begin{quote}
      {\bf Theorem A.} {\em Let $L$ be a smooth branch on the smooth surface singularity 
      $S$ and let $\cF$ 
      be a finite set of branches on $S$, transversal to $L$. Then the function 
      $u_L : \cF \times \cF \to [0, \infty)$ defined by 
      $u_L(A, B) := \dfrac{(L \cdot A) \cdot (L \cdot B)}{A \cdot B}$ if $A \neq B$ 
      and $u_L(A, A) := 0$ is an ultrametric on $\cF$. }
\end{quote}
\medskip

 This may be seen as a property of the pair $(S, L)$ and one may ask whether it 
 extends to other pairs consisting of a normal surface singularity and a branch on it. It turns out 
 that this property characterizes the so-called \emph{arborescent singularities}, that is, 
 the normal surface singularities such that the dual graph of every good resolution is a tree. 
 Namely, one has the following 
 theorem, which combines \cite[Thm. 85]{GBGPPP 18} and \cite[Thm. 1.46]{GBGPPPR 19}:
 
 \medskip
 \begin{quote}
      {\bf Theorem B.} {\em Let $L$ be a branch on the normal surface singularity $S$. Then the function 
      $u_L$ defined as before 
      is an ultrametric on any finite set $\cF$ of branches on $S$ distinct from $L$ 
      if and only if $S$ is an arborescent singularity. }
\end{quote}
\medskip

It is possible to think topologically about ultrametrics on finite sets in terms of certain 
types of decorated rooted trees.  In particular, any such ultrametric determines 
a rooted tree. One may try to describe this tree directly from the pair $(S,\cF \cup \{L\})$,  
when $S$ is arborescent and the ultrametric 
is the function $u_L$ associated to a branch $L$ on it. 
In order to formulate such a description, we need the notion of \emph{convex hull} of 
a finite set of vertices of a tree: it is the union of the paths joining those vertices pairwise. 

The following result was obtained in \cite[Thm. 87]{GBGPPP 18}:

\medskip
 \begin{quote}
      {\bf Theorem C.} {\em Let $L$ be a branch on the arborescent singularity $S$ and let 
      $\cF$ be a finite set of branches on $S$ distinct from $L$. Then the 
      rooted tree determined by the ultrametric $u_L$ on $\cF$ is isomorphic 
      to the convex hull of the strict transform of $\cF \cup \{L\}$ in the dual 
      graph of its preimage by an embedded resolution of it, rooted at the vertex 
      representing the strict transform of $L$. }
\end{quote}
\medskip

Even when the singularity $S$ is not arborescent, the function $u_L$ becomes an ultrametric 
in restriction to suitable sets $\cF$ 
of branches on $S$. Those sets are defined only in terms of convex hulls 
taken in the so-called \emph{brick-vertex tree} of the dual graph of an embedded resolution 
of $\cF \cup \{L\}$, and do not depend 
on any numerical parameter of the exceptional divisor of the resolution, 
be it a genus or a self-intersection number. The brick-vertex tree 
of a connected graph is obtained canonically by replacing each \emph{brick} 
-- a maximal inseparable subgraph which is not an edge -- by a star, whose central vertex 
is called a \emph{brick-vertex}. One has the following generalization of Theorem C 
(see \cite[Thm. 1.42]{GBGPPPR 19}):

\medskip
 \begin{quote}
      {\bf Theorem D.} {\em Let $L$ be a branch on the normal surface singularity $S$ and let 
      $\cF$ be a finite set of branches on $S$ distinct from $L$.  Consider an  
      embedded resolution of $\cF \cup \{L\}$. Assume that the convex hull of its 
      strict transform in the brick-vertex tree of the dual graph of its preimage does not contain 
      brick-vertices of valency at least $4$ in the convex hull. 
      Then the function $u_L$ is an ultrametric in restriction to $\cF$ 
      and the associated rooted tree is isomorphic 
      to the previous convex hull, rooted at the vertex 
      representing the strict transform of $L$. }
\end{quote}
\medskip

If $S$ is not arborescent, 
there may exist other sets of branches on which $u_L$ restricts to an ultrametric. 
Unlike the sets described in the previous theorem, in general they do not depend only on the topology 
of the dual graph of their preimage on some embedded resolution, but also  
on the self-intersection numbers of the components of the exceptional divisor 
(see \cite[Ex. 1.44]{GBGPPPR 19}). 

The aim of the present notes is to introduce the reader to the previous results. 
Note that in the article \cite[Part 2]{GBGPPPR 19}, these results were extended to the space 
of real-valued semivaluations of the local ring of $S$. 

Let us describe briefly the structure of the paper. 
In Section \ref{sec:intmult} are recalled basic facts about \emph{multiplicities} and 
\emph{intersection numbers} of plane curve singularities. In Section \ref{sec:statmt} 
are stated two equivalent formulations of P\l oski's theorem. In Section 
\ref{sec:ultramtrees} is explained the relation between ultrametrics and rooted trees 
mentioned above, an intermediate concept being that of \emph{hierarchy} on a finite set. 
Using this relation,  Section \ref{sec:proofthm} presents a proof of Theorem A. 
This proof uses the so-called \emph{Eggers-Wall tree} of a plane curve singularity 
relative to a smooth reference branch $L$, 
constructed using associated Newton-Puiseux series. 
Section \ref{sec:ultramchar} explains the notions used in the formulation of 
Theorem B, that is, those of \emph{good resolution}, \emph{embedded resolution}, 
associated \emph{dual graph} and 
\emph{arborescent singularity}. In Section \ref{sec:brickvertex} are described the 
related notions of \emph{cut-vertex} and 
\emph{brick-vertex tree} of a finite connected graph. Section \ref{sec:ultrambvtree} 
explains and illustrates the statement of Theorem D. 
In Section \ref{sec:mumfint} is explained Mumford's intersection theory of 
divisors on normal surface singularities, after a proof of a fundamental property of such 
singularities, stating that the intersection form of any of their resolutions is negative 
definite. In Section \ref{sec:refineq} the ultrametric inequality concerning the restriction 
of $u_L$ to a triple of branches is reexpressed in terms of the notion of 
\emph{angular distance} on the dual graph of an adapted resolution. 
A crucial property of this distance is stated, which relates it to the cut-vertices of the 
dual graph. In Section \ref{sec:thmgraph} is sketched the proof of a theorem of 
pure graph theory, relating distances satisfying the previous crucial property and 
the brick-vertex tree of the graph. This theorem implies Theorem D.

\medskip
{\bf Acknowledgements.} 
This work was partially supported by the French grants ANR-17-CE40-0023-02 LISA 
and Labex CEMPI (ANR-11-LABX-0007-01). 
I am grateful to Alexandre Fernandes, 
   Adam Parusinski, Anne Pichon, Maria Ruas, Jos\'e Seade and Bernard Teissier, 
   who formed the scientific committee of the 
   \emph{International school on singularities and Lipschitz geometry}, for having invited me to give a 
   course on the relations between ultrametrics and surface singularities. 
   I am very grateful to my co-authors Evelia Garc\'{\i}a Barroso, Pedro Gonz\'alez P\'erez 
   and Matteo Ruggiero for the collaboration leading to our works \cite{GBGPPP 18}, 
   \cite{GBGPPPR 19} presented in this paper and for their remarks on a previous version 
   of it.

\section{Multiplicity and intersection numbers for plane curve singularities}
\label{sec:intmult}

In this section we recall the notions of \emph{multiplicity} of a \emph{plane curve singularity} 
and \emph{intersection number} of two such singularities. One may find more details in   
\cite[Sect. 5.1]{JP 00} or \cite[Chap. 8]{F 01}. 
\medskip

Let $(S,s)$ be a {\bf smooth surface singularity}, \index{singularity!smooth surface} that is, 
a germ of \emph{smooth} complex analytic surface. Denote by  $\boxed{\cO_{S,s}}$  
its local $\C$-algebra and by $\boxed{\m_{S,s}}$ its maximal ideal, containing 
the germs at $s$ of holomorphic functions vanishing at $s$. 

A {\bf local coordinate system} \index{coordinate system!local} 
on $S$ at $s$ is a pair $(x,y) \in \m_{S,s} \times  \m_{S,s}$ 
establishing an isomorphism between a neighborhood of $s$ in $S$ and a 
neighborhood of the origin in $\C^2$. Algebraically speaking, this is equivalent to 
the fact that the pair $(x,y)$ generates the maximal ideal $\m_{S,s}$, or that it 
realizes an isomorphism $\cO_{S,s} \simeq \C \{ x, y \}$. This isomorphism allows 
to see each germ $f \in \cO_{S,s}$ as a convergent power series in the variables 
$x$ and $y$. 

A {\bf curve singularity on $(S,s)$} \index{singularity!curve} 
is a germ $(C,s) \hookrightarrow (S,s)$ of not necessarily reduced 
curve on $S$, passing through $s$. As the germ $(S,s)$ is isomorphic to the germ of the affine 
plane $\C^2$ at any of its points, one says also that $(C,s)$ is a {\bf plane curve singularity}. 
A {\bf defining function} of $(C,s)$ is a function 
$f \in \m_{S,s}$ such that $\cO_{C,s} = \cO_{S,s}/(f)$, where $(f)$ denotes the principal 
ideal of $\cO_{S,s}$ generated by $f$. We write then $C = \boxed{Z(f)}$. 

The curve singularity $(C,s)$ may also be seen as an effective 
principal divisor on $(S,s)$. This allows to write $C = \sum_{i \in I} p_i C_i$, where  
$p_i \in \N^*$ for all $i \in I$ and the curve singularities $C_i$ are pairwise distinct 
and irreducible. We say in this case that the $C_i$'s are the {\bf branches of $C$}. A {\bf branch 
\index{branch} on $(S,s)$} is an irreducible curve singularity on $(S,s)$. 

Next definition introduces the simplest invariant of a plane curve singularity: 

\begin{definition}  \label{def:mult}
     Assume that $f \in \cO_{S,s}$. Its {\bf multiplicity} \index{multiplicity!of a curve singularity} 
     is the vanishing order 
     of $f$ at $s$:
        \[  \boxed{m_s(f)} := \sup \{ n \in \N, \: f \in \m_{S,s}^n \}   \in \N \cup \{\infty\}. \]
      If $(C,s)$ is the curve singularity defined by $f$, we say also that $\boxed{m_s(C)} := m_s(f)$ is 
      its {\bf multiplicity} at $s$. 
\end{definition}

It is a simple exercise to check that the multiplicity of a curve singularity is  
independent of the function defining it. If one chooses local coordinates 
$(x,y)$ on $(S,s)$, then $m_s(f)$ is the smallest degree of the monomials appearing in 
the expression of $f$ as a convergent power series in the variables $x$ and $y$. 
One has $m_s(f) = \infty$ if and only if $f = 0$ and  $m_s(f)=1$ if and 
only if $f$ defines a \emph{smooth} branch on $(S,s)$.  

The following definition describes a measure of the way in which two 
curve singularities intersect:

\begin{definition}  \label{def:intnumb}
    Let $C, D \hookrightarrow (S,s)$ be two plane curve singularities defined by 
    $f,g \in \m_{S,s}$. Then their {\bf intersection number} \index{intersection number} is defined by:
         \[  \boxed{C \cdot D} := \dim_{\C} \frac{\cO_{S,s}}{(f,g)}    \in \N \cup \{ \infty \}, \]
    where $(f,g)$ denotes the ideal of $\cO_{S,s}$ generated by $f$ and $g$. 
\end{definition}

Note that $C \cdot D < + \infty$ if and only if $C$ and $D$ do not share common branches, 
which is also equivalent to the existence of $n \in \N^*$ such that one has the following 
inclusion of ideals: $(f,g) \supseteq \m_{S,s}^n$. Nevertheless, unlike the multiplicity, 
the intersection number $C \cdot D$ is not always equal to the smallest exponent $n$ having this 
property. For instance,  if one takes $f := x^3$ and $g := y^2$, then 
$C \cdot D = 6$ but $(f,g) \supseteq (x,y)^5$. We leave 
the verification of the previous facts as an exercise.

The following proposition, which may be proved using Proposition 
\ref{prop:intpar} below, relates multiplicities and intersection numbers: 

\begin{proposition}   \label{prop:ineqint}
     If $(C,s) \hookrightarrow (S,s)$ is a plane curve singularity, then 
     $C \cdot L \geq m_s(C)$ for any smooth branch $L$ through $s$, with equality 
     if and only if $L$ is transversal to $C$. More generally, if $D$ is a second 
     curve singularity on $(S,s)$, then $C \cdot D \geq m_s(C) \cdot m_s(D)$, 
     with equality if and only if $C$ and $D$ are transversal. 
\end{proposition}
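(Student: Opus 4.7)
I would prove the more general inequality $C \cdot D \geq m_s(C) \cdot m_s(D)$ with its equality criterion, since the statement about a smooth branch $L$ is the specialization $D = L$, $m_s(L) = 1$. The approach has two parts: reduce to intersecting a single branch with an arbitrary curve, then perform an explicit valuation estimate using the parametrization formula of Proposition \ref{prop:intpar}.

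\textbf{Reduction to branches.} Decompose $C = \sum p_i C_i$ and $D = \sum q_j D_j$ into branches. A standard short exact sequence argument (multiplication by one factor on the quotients) shows that the intersection number is bilinear in effective divisors, giving $C \cdot D = \sum p_i q_j (C_i \cdot D_j)$. The multiplicity, being the vanishing order of a defining function, is additive under products of defining equations, so $m_s(C)\, m_s(D) = \sum p_i q_j \, m_s(C_i)\, m_s(D_j)$. Finally, $C$ and $D$ are transversal precisely when every branch of $C$ is transversal to every branch of $D$. Hence it is enough to prove the statement when $C = B$ is a single branch.

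\textbf{The valuation estimate for a branch.} Let $(\phi(t), \psi(t))$ be a primitive parametrization of $B$ and $g \in \m_{S,s}$ a defining function of $D$. Proposition \ref{prop:intpar} will give $B \cdot D = \nu_t(g(\phi(t), \psi(t)))$, while $m_s(B) = \min(\nu_t \phi, \nu_t \psi) =: a$. Expanding $g = g_n + g_{n+1} + \cdots$ into homogeneous parts in local coordinates with $n = m_s(D)$, every monomial $x^i y^j$ of total degree $k \geq n$ contributes $\phi^i \psi^j$ of $t$-valuation at least $a(i+j) = ak \geq an$. Therefore $\nu_t(g(\phi, \psi)) \geq an = m_s(B) \cdot m_s(D)$, which is the desired inequality.

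\textbf{Equality and the main obstacle.} Equality holds iff the coefficient of $t^{an}$ in $g_n(\phi, \psi)$ is nonzero, since higher-degree components $g_k$ with $k > n$ only produce terms of strictly higher valuation. The main obstacle is to identify this leading coefficient cleanly as the value of the homogeneous form $g_n$ on the tangent vector of $B$. This requires splitting into three cases according to whether $\nu_t \phi$ is smaller than, equal to, or larger than $\nu_t \psi$: in the unequal cases only a single monomial of $g_n$ contributes at the top order (a pure power of $\phi$ or $\psi$), while in the equal case all monomials of $g_n$ contribute, combining into the evaluation of $g_n$ on the initial coefficient vector of $(\phi, \psi)$. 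In every case the leading coefficient is $g_n$ evaluated on a representative of the tangent direction of $B$, and this value vanishes exactly when that direction is a linear factor of $g_n$, i.e.\ a tangent line of $D$. This is precisely the failure of transversality of $B$ and $D$, completing the characterization.
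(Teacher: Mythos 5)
Your proof is correct, and it follows exactly the route the paper indicates: the paper itself gives no proof of this proposition, saying only that it ``may be proved using Proposition \ref{prop:intpar}'', which is precisely what you do after the standard bilinearity/additivity reduction to a single branch $B$. The only ingredient you use without justification is the identity $m_s(B)=\min(\mathrm{ord}_t\phi,\mathrm{ord}_t\psi)$ for a normal parametrization; this is a standard, non-circular fact (e.g.\ via Weierstrass preparation and Newton--Puiseux, which give a parametrization $t\mapsto(t^m,\eta(t))$ with $\mathrm{ord}_t\eta\geq m=m_s(B)$ in suitable coordinates), so it is reasonable to cite it, but a fully self-contained write-up should say a word about it.
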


Let us explain the notion of {\em transversality} used in the previous proposition, as it is 
more general than the standard notion of transversality, which applies only to smooth   
submanifolds of a given manifold. If $C$ is a branch on $(S,s)$ and one chooses a
local coordinate system $(x,y)$ on $(S,s)$, as well as a defining function $f$ of $C$, 
it may be shown that the lowest degree part of $f$ is a power of a complex linear form 
in $x$ and $y$. This linear form defines a line in the tangent plane $T_s S$ of $S$ 
at $s$, which is by definition the {\bf tangent line} \index{tangent line} of $C$ at $s$. One may show that 
it is independent of the choices of local coordinates and defining function of $C$. 
If $C$ is now an arbitrary curve singularity, then its {\bf tangent cone} 
\index{tangent cone!of a plane curve singularity} 
is the union of 
the tangent lines of its branches. Given two plane curve singularities on the same 
smooth surface singularity $S$, one says that 
they are {\bf transversal} \index{transversal!plane curve singularities} 
if each line of the tangent cone of one of them is transversal 
(in the classical sense) to each line of the tangent cone of the other one.

Let us pass now to the question of \emph{computation} of intersection numbers. A basic method 
consists in breaking the symmetry between the two curve singularities, by 
working with a defining function of one of them and by \emph{parametrizing} the other one. 
One has to be cautious and choose a \emph{normal parametrization}, in the 
following sense: 

\begin{definition}  \label{def:normpar}
      A {\bf normal parametrization} \index{normal parametrization} 
      of the branch $(C,s)$ is a germ of holomorphic morphism 
      $\nu : (\C, 0) \to (C, s)$ which is a normalization morphism, that is, it has topological 
      degree one. 
\end{definition}

For instance, if the branch $(C,0)$ on $(\C^2, 0)$ is defined by the function $y^2 - x^3$, then 
$t \to (t^2, t^3)$ is a normal parametrization of $C$, but $u \to (u^4, u^6)$ is not. 
A normal parametrization of a branch $(C,s)$ may be also characterized by asking it 
to establish a homeomorphism between suitable representatives of the germs 
$(\C, 0)$ and $(C, s)$. 

Normalization morphisms may be defined more generally for reduced germs $(X,x)$ 
of arbitrary dimension (see \cite[Sect. 4.4]{JP 00}), 
by considering the multi-germ whose multi-local ring is the integral closure 
of the local ring $\cO_{X,x}$ in its total ring of fractions. Except for curve singularities, 
the source of a normalization morphism is not smooth in general.

The following proposition is classical and states the announced expression of intersection 
numbers in terms of a parametrization of one germ and a defining function of the 
second one (see \cite[Prop. II.9.1]{BHPV 04} or \cite[Lemma 5.1.5]{JP 00}):

\begin{proposition}  \label{prop:intpar}
      Let $C$ be a branch on the smooth surface singularity 
      $(S,s)$ and $D$ be a second curve singularity, not necessarily reduced. 
      Let $\nu : (\C, 0) \to (C, s)$  be a normal parametrization 
      of $C$ and let $g \in \m_{S,s}$ be a defining function of $D$. Then:
          \[  C \cdot D = \mathrm{ord}_t \left( g \circ \nu(t)  \right), \]
       where $\boxed{\mathrm{ord}_t}$ denotes the order of a power series 
       in the variable $t$.
\end{proposition}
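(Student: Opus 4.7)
The plan is to identify both sides of the claimed equality with lengths of Artinian modules and then compare them through the normalization morphism $\nu$.

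First I would dispose of the degenerate case where $C$ is a branch of $D$: then $f \mid g$, so $C \cdot D = \infty$, and $g \circ \nu \equiv 0$, so its order is also $\infty$. Thus I can assume $g \circ \nu \not\equiv 0$, equivalently $g$ is a non-zero-divisor in $\cO_{C,s} := \cO_{S,s}/(f)$. Then the right-hand side of the claimed formula is immediate: if $h(t) := g \circ \nu(t) = c\, t^n + \cdots \in \C\{t\}$ with $c \neq 0$, then $\C\{t\}/(h) \cong \C\{t\}/(t^n)$, which has $\C$-dimension $n = \mathrm{ord}_t(h)$. On the left, the third isomorphism theorem gives $\cO_{S,s}/(f,g) \cong \cO_{C,s}/(\bar g)$, where $\bar g$ denotes the class of $g$ in $\cO_{C,s}$. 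So the whole statement reduces to proving the equality $\dim_\C \cO_{C,s}/(\bar g) = \dim_\C \C\{t\}/(h)$.

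The bridge between these two quotients is the comorphism $\nu^* : \cO_{C,s} \hookrightarrow \C\{t\}$ induced by the normal parametrization, under which $\bar g$ maps to $h$. Because $\nu$ is a normalization, $\C\{t\}$ is the integral closure of $\cO_{C,s}$ in its field of fractions, and it is a finite $\cO_{C,s}$-module; moreover the conductor ideal is $\m_{C,s}$-primary, so the cokernel $Q := \C\{t\}/\cO_{C,s}$ has finite length as an $\cO_{C,s}$-module. I would then apply the snake lemma to the endomorphism \enquote{multiplication by $\bar g$} of the short exact sequence $0 \to \cO_{C,s} \to \C\{t\} \to Q \to 0$: since $\bar g$ is a non-zero-divisor in both $\cO_{C,s}$ and $\C\{t\}$, the resulting six-term sequence collapses to
\[
0 \to \ker(\bar g \mid Q) \to \cO_{C,s}/(\bar g) \to \C\{t\}/(h) \to Q/\bar g Q \to 0.
\]

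Here comes the main step of the argument. For any finite-length module $Q$ over a Noetherian local ring and any endomorphism $\phi$ of $Q$, the additivity of length in the exact sequences $0 \to \ker\phi \to Q \to \mathrm{im}\,\phi \to 0$ and $0 \to \mathrm{im}\,\phi \to Q \to \mathrm{coker}\,\phi \to 0$ yields $\ell(\ker\phi) = \ell(\mathrm{coker}\,\phi)$. Applied to $\phi = (\bar g\cdot) : Q \to Q$ this gives $\dim_\C \ker(\bar g \mid Q) = \dim_\C Q/\bar g Q$, and the alternating-sum of $\C$-dimensions in the four-term exact sequence above then forces $\dim_\C \cO_{C,s}/(\bar g) = \dim_\C \C\{t\}/(h)$, finishing the proof. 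The one subtlety worth checking carefully is the finite-length statement for $Q$, which ultimately rests on the fact that a branch has a non-trivial conductor, i.e.\ that some power of $\m_{C,s}$ lies in the image of $\cO_{C,s}$ inside $\C\{t\}$; this is the only nontrivial input from singularity theory and is where the hypothesis that $\nu$ is a \emph{normal} (rather than arbitrary) parametrization is essential.
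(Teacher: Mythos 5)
Your proof is correct and follows essentially the same route as the paper's: reduce to comparing $\dim_\C \cO_{C,s}/(\bar g)$ with $\dim_\C \C\{t\}/(h)$, apply the snake lemma to the square built from the inclusion $\cO_{C,s}\hookrightarrow\C\{t\}$ and multiplication by $\bar g$, and conclude by additivity of length over exact sequences (you run the snake in the transposed direction, so your four-term sequence is the companion of the paper's $0\to K_1\to \C\{t\}/\cO_{C,s}\to \C\{t\}/\cO_{C,s}\to K_2\to 0$, but the information extracted is identical). The one point where you are more careful than the paper is in justifying, via the conductor, that $Q=\C\{t\}/\cO_{C,s}$ has finite length --- a finiteness that the paper's second alternating-sum step uses tacitly.
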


\begin{proof} 
   This proof is adapted from that of \cite[Lemma 5.1.5]{JP 00}. 
   
   The order of the zero power series is equal to $\infty$ by definition, therefore 
    the statement is true when $C$ is a branch of $D$. 
    
    Let us assume from now on that {\em $C$ is not a branch of $D$}. 
   
   Consider a defining function $f \in  \m_{S,s}$ of $C$. By Definition \ref{def:intnumb}: 
     \begin{equation} \label{eq:firstcomp}
          C \cdot D = \dim_{\C} \frac{\cO_{S,s}}{(f,g)} = \dim_{\C} \frac{\cO_{S,s}/(f)}{(g_C)} = 
          \dim_{\C} \frac{\cO_{C,s}}{(g_C)}, 
      \end{equation}
     where we have denoted by $g_C \in \cO_{C,s}$ the restriction of $g$ to the branch $C$. 
     
     Algebraically, the normal parametrization $\nu : (\C, 0) \to (C, s)$ corresponds to a 
     morphism of local $\C$-algebras $\cO_{C,s} \hookrightarrow \C\{ t\}$, isomorphic to 
     the inclusion morphism of $\cO_{C,s}$ into its integral closure taken inside its quotient field. 
     In order to distinguish them, denote from now on by $g_C \:  \cO_{C,s}$ the principal ideal 
     generated by $g_C$ inside $\cO_{C,s}$ and by $g_C \:  \C\{t\}$ its analog inside $\C\{t\}$.      
     One has the following equality inside the local $\C$-algebra $\C\{t\}$:
        \[  g \circ \nu(t)  = g_C.  \]
      As a consequence: 
         \[  g_C \:   \C\{t\} =  t^{\mathrm{ord}_t \left( g \circ \nu(t)  \right)} \C\{t\}.  \]
     Therefore: 
        \begin{equation} \label{eq:secomp}
           \mathrm{ord}_t \left( g \circ \nu(t)  \right) 
                 = \dim_{\C} \frac{\C\{t\}}{g_C \:  \C\{t\}}.
         \end{equation}
      By comparing equations (\ref{eq:firstcomp}) and (\ref{eq:secomp}), 
      we see that the desired equality is equivalent to:
          \begin{equation} \label{eq:thirdcomp}
           \dim_{\C} \frac{\cO_{C,s}}{g_C \:  \cO_{C,s}}
                 = \dim_{\C} \frac{\C\{t\}}{g_C \:  \C\{t\}}.
         \end{equation}
       The two quotients appearing in (\ref{eq:thirdcomp}) are the cokernels of the two 
       injective multiplication maps $\cO_{C,s} \overset{\cdot g_C}{\longrightarrow} \cO_{C,s}$ 
       and $\C\{t\} \overset{\cdot g_C}{\longrightarrow} \C\{t\}$.  The associated short exact 
       sequences may be completed into a commutative diagram in which the first two vertical 
       maps are the inclusion map $\cO_{C,s} \hookrightarrow \C\{ t\}$:
       \begin{center}
            \begin{tikzpicture}[node distance=2cm, auto]  
                    \node (O1) {$0$};
                    \node   (A) [right of=O1] {$\cO_{C,s}$}; 
                    \draw[->] (O1) to node {\:} (A);
                    \node (B) [right of=A] {$\cO_{C,s}$}; 
                    \draw[->] (A) to node {\:} (B);
                    \node (C) [right of=B] {$ \dfrac{\cO_{C,s}}{g_C \:  \cO_{C,s}}$}; 
                      \draw[->] (B) to node {\:} (C);
                     \node (O2) [right of=C] {$0$};
                     \draw[->] (C) to node {\:} (O2);
                      
                     \node  (O3) [below of=O1] {$0$};
                      \node   (A') [right of=O3] {$\C\{t\}$}; 
                    \draw[->] (O3) to node {\:} (A');
                    \node (B') [right of=A'] {$\C\{t\}$}; 
                    \draw[->] (A') to node {\:} (B');
                    \node (C') [right of=B'] {$ \dfrac{\C\{t\}}{g_C \:  \C\{t\}}$}; 
                      \draw[->] (B') to node {\:} (C');
                     \node (O4) [right of=C'] {$0$};
                     \draw[->] (C') to node {\:} (O4);
                     
                     \draw[->] (A) to node {\:} (A');
                     \draw[->] (B) to node {\:} (B');
                     \draw[->] (C) to node {\:} (C');
              \end{tikzpicture}   
     \end{center} 
   The last vertical map is not necessarily an isomorphism. We want to show that its source 
   and its target have the same dimension. Let us complete it into an exact sequence by 
   considering its kernel $K_1$ and cokernel $K_2$:
       \[ 0 \longrightarrow K_1  \longrightarrow   \dfrac{\cO_{C,s}}{g_C \:  \cO_{C,s}} 
             \longrightarrow   \dfrac{\C\{t\}}{g_C \:  \C\{t\}} \longrightarrow  K_2    \longrightarrow 0.  \]  
      For every finite exact sequence of finite-dimensional vector spaces, the alternating sum 
      of dimensions vanishes. Therefore:
         \[ \dim_{\C} K_1 -   \dim_{\C} \frac{\cO_{C,s}}{g_C \:  \cO_{C,s}}
                 +  \dim_{\C} \frac{\C\{t\}}{g_C \:  \C\{t\}} - \dim_{\C} K_2 =0. \] 
       This shows that the desired equality (\ref{eq:thirdcomp}) would result from 
       the equality $\dim_{\C} K_1 =  \dim_{\C} K_2$. This last equality is a consequence 
       of the so-called ``snake lemma'' (see for instance \cite[Prop. 2.10]{AM 69}), 
       applied to the previous commutative diagram. Indeed, by this lemma, one has an  
       exact sequence:
           \[ 0 \longrightarrow K_1  \longrightarrow   \dfrac{\C\{t\} }{\cO_{C,s}} 
             \longrightarrow  \dfrac{\C\{t\} }{\cO_{C,s}}  \longrightarrow  K_2    \longrightarrow 0.   \]
        Reapplying the previous argument about alternating sums of dimensions, one gets 
        the needed equality $\dim_{\C} K_1 =  \dim_{\C} K_2$.
\end{proof}

Note that the previous proof shows in fact that for any abstract branch $(C,s)$, 
not necessarily planar, one has the equality:
   \begin{equation} \label{eq:genhyp} 
       \dim_{\C} \frac{\cO_{C,s}}{(g)}  = \mathrm{ord}_t \left( g \circ \nu(t)  \right),
   \end{equation}
for any $g \in \cO_{C,s}$ and for any normal parametrization $\nu : (\C, 0) \to (C,s)$ 
of $(C,s)$. If the branch $(C,s)$ is contained in an ambient germ 
$(X,s)$ and $H$ is an effective principal divisor on $(X,s)$ which does not contain 
the branch, then equality (\ref{eq:genhyp}) shows that 
the intersection number of $C$ and $H$ at $s$ may be computed  
as the order of the series obtained by composing a defining function of $(H,s)$ and a 
normal parametrization of $(C,s)$.

\begin{example}  \label{ex:basic}
    Consider the branches:
        $$ \left\{ \begin{array}{l}
                         A := Z(y^2 - x^3), \\
                         B := Z(y^3 - x^5) , \\
                         C := Z(y^6 - x^5)
                       \end{array} \right.$$
     on the smooth surface singularity $(\C^2, 0)$. 
     Denoting by $m_0$ the multiplicity function at the origin of $\C^2$, we have: 
            $$m_0(A) =2, \: m_0(B) = 3, \:  m_0(C)= 5,$$ 
    as results from Definition \ref{def:mult}. 
     Using Proposition \ref{prop:intpar} and the fact that whenever $m$ and $n$ are coprime 
     positive integers, 
     $t \to (t^n , t^m)$ is a normal parametrization of $Z(y^n - x^m)$, one gets the following 
     values for the intersection numbers of the branches $A, B, C$: 
         $$ B \cdot C = 15, \:  C \cdot A = 10, \:  A \cdot B = 9.$$
     Therefore: 
          $$ \left\{ \begin{array}{l}
                         \dfrac{B \cdot C}{m_0(B) \cdot m_0(C)} = 1, \\
                            \\
                        \dfrac{C \cdot A}{m_0(C) \cdot m_0(A)} = 1, \\
                            \\
                         \dfrac{A \cdot B}{m_0(A) \cdot m_0(B)} =  \dfrac{3}{2}.
                       \end{array} \right.$$
     One notices that two of the previous quotients are equal and the third one is 
     greater than them. P\l oski discovered that this is a general phenomenon for 
     plane branches, as explained in the next section. 
\end{example}

\section{The statement of P\l oski's theorem}
\label{sec:statmt}

In this section we state a theorem of P\l oski of 1985 and a reformulation of it in terms 
of the notion of \emph{ultrametric}. 
\medskip

Denote simply by $\boxed{S}$ the germ of \emph{smooth} surface $(S,s)$ and 
by $\boxed{m(A)}$ the multiplicity of a branch $(A,s) \hookrightarrow (S,s)$. 

In his 1985 paper \cite{P 85}, P\l oski \index{P\l oski's theorem} proved the following theorem:

\begin{theorem}  \label{thm:ploski}
      If $A, B, C$ are three pairwise distinct branches on a smooth surface singularity $S$, 
      then one has the following relations, up to a permutation of the three fractions: 
        $$\dfrac{A \cdot B}{m(A) \cdot m(B)}  \geq  \dfrac{B \cdot C}{m(B) \cdot m(C)} = 
                \dfrac{C \cdot A}{m(C) \cdot m(A)}. $$
\end{theorem}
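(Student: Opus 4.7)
My plan is to reduce Płoski's theorem to Theorem A via a generic transverse smooth branch, and then to prove Theorem A using Newton--Puiseux expansions organized by the Eggers--Wall tree.

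First I would pick a smooth branch $L$ through $s$ that is transverse to each of $A$, $B$, $C$ (this is possible because $A\cup B\cup C$ has only finitely many tangent lines at $s$, while $T_s S$ contains infinitely many). By Proposition~\ref{prop:ineqint}, transversality gives $m(A) = L\cdot A$, $m(B) = L\cdot B$ and $m(C) = L\cdot C$, so the three ratios appearing in Theorem~\ref{thm:ploski} rewrite as
$$
\frac{A\cdot B}{(L\cdot A)(L\cdot B)},\qquad
\frac{B\cdot C}{(L\cdot B)(L\cdot C)},\qquad
\frac{C\cdot A}{(L\cdot C)(L\cdot A)}.
$$
Their reciprocals are exactly the values $u_L(\cdot,\cdot)$ from Theorem A. Thus it is enough to show that $u_L$ is an ultrametric on the three-point set $\{A,B,C\}$, since the ultrametric inequality is equivalent to the ``isoceles triangle'' property: of any three pairwise values, two are equal and not less than the third. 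Inverting the ultrametric inequality then gives the statement of Płoski's theorem.

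To prove the ultrametric property for $u_L$ on three branches, I would choose local coordinates $(x,y)$ on $(S,s)$ with $L = Z(x)$. Transversality of $L$ to each branch means that, after Weierstrass preparation, each of $A,B,C$ is defined by a polynomial in $y$ whose degree equals the multiplicity of the branch. Writing its roots as Newton--Puiseux series $\alpha(x^{1/m(A)})$, $\beta(x^{1/m(B)})$, $\gamma(x^{1/m(C)})$, the intersection number of two of these branches, say $A\cdot B$, can be computed via Proposition~\ref{prop:intpar} by substituting a normal parametrization of $B$ into a defining function of $A$. Grouping the resulting expression by conjugate roots yields the classical ``contact formula'' expressing $A\cdot B$ as a sum over pairs of roots of the $x$-adic orders of their differences.

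The main obstacle is that this contact formula does not, by itself, turn the ultrametric inequality into a one-line arithmetic identity: branches of different multiplicities have Puiseux roots living in Puiseux fields with different ramification indices, so the values $u_L(A,B)$, $u_L(B,C)$, $u_L(C,A)$ are rational numbers whose comparison is not directly visible from the Puiseux expansions. The conceptual tool to handle this uniformly is the \emph{Eggers--Wall tree} $\Theta_L(A\cup B\cup C)$: a rooted metric tree whose leaves correspond to the branches $A,B,C$, whose root corresponds to $L$, and whose edges carry a distinguished exponent function. I would show that, under the identification given by this tree, the quantity $u_L(A,B)$ equals the height at which the paths from the leaves of $A$ and $B$ to the root first meet (equivalently, the distance to the root of the branching point of the two leaves).

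Once this is established, the ultrametric inequality becomes a purely tree-theoretic statement: for three leaves of a rooted tree, the three pairwise meeting points form a ``Y'' configuration, so two of them coincide and the third lies strictly below (farther from the root). Translating this back yields that two of the three numbers $u_L(A,B), u_L(B,C), u_L(C,A)$ are equal and the third is no larger, which is exactly the ultrametric inequality. Inverting and reinstating the multiplicities gives Theorem~\ref{thm:ploski}.
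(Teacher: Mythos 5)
Your proposal follows essentially the same route as the paper: reduce P\l oski's theorem to the ultrametricity of $u_L$ for a smooth branch $L$ transversal to $A$, $B$, $C$ (via Proposition \ref{prop:ineqint}), then establish that ultrametricity through the Eggers--Wall tree, exactly as in Theorems \ref{thm:ulultram}, \ref{thm:intmin} and \ref{thm:idtrees}. The only slip is in your identification of $u_L(A,B)$ with the ``distance to the root of the branching point'': by Theorem \ref{thm:intmin} that distance $\int_L^{A\wedge_L B} de_L/i_L$ equals $1/u_L(A,B)$, so $u_L$ is a strictly \emph{decreasing} function of the meeting point $A\wedge_L B$ --- which is exactly what makes the deepest of the three meeting points give the smallest of the three values, consistent with the conclusion you draw.
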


Denote by $\boxed{\mathcal{B}(S)}$ the infinite set of branches on $S$. 
By inverting the fractions appearing in the statement of Theorem \ref{thm:ploski}, 
it may be reformulated in the following equivalent way:

\begin{theorem}  \label{thm:ploskibis}
       Let $S$ be a smooth surface singularity. 
       Then the map $\mathcal{B}(S) \times \mathcal{B}(S) \to [0, \infty)$ defined by 
         $$ (A, B) \to \left\{ \begin{array}{ll}
                                           \dfrac{m(A) \cdot m(B)}{A \cdot B} & \mbox{ if } A \neq B, \\
                                           0 & \mbox{ otherwise}
                                        \end{array} \right. $$
      is an ultrametric. 
\end{theorem}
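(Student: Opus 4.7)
The plan is to derive Theorem \ref{thm:ploskibis} as a direct reformulation of Theorem \ref{thm:ploski}. I first dispatch the elementary axioms of an ultrametric. Symmetry is immediate since $A \cdot B = B \cdot A$. Nonnegativity is clear because multiplicities and intersection numbers of two distinct branches are positive integers. For the separation axiom, if $A \neq B$ are two distinct branches on $S$ they share no common branch (each being irreducible and pairwise distinct), so $A \cdot B$ is a finite positive integer and hence $m(A) m(B)/(A \cdot B) > 0$, while by definition the value on the diagonal is zero.

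The substantive part is the strong triangle inequality for three pairwise distinct branches $A, B, C$. I would reduce this to the well-known \emph{isosceles triangle} characterization of ultrametrics: a symmetric, nonnegative, point-separating function $d$ is an ultrametric if and only if, for every triple of points, among the three values $d(A,B), d(B,C), d(C,A)$ the maximum is attained at least twice. The easy direction is the one we need: if the two largest coincide, then for any choice of pair the value on that pair is bounded above by the maximum of the values on the other two pairs, which is the ultrametric inequality.

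It remains to translate the isosceles condition for the candidate ultrametric into the statement of Theorem \ref{thm:ploski}. Setting $P(X,Y) := (X \cdot Y)/(m(X) m(Y))$, the candidate ultrametric equals $1/P$ on pairs of distinct branches, so the \emph{largest} value of $1/P$ among the three pairs corresponds to the \emph{smallest} value of $P$. Hence the isosceles condition for $1/P$ reads: among $P(A,B), P(B,C), P(C,A)$, the two smallest values coincide, i.e.\ two of the three ratios are equal and the remaining one is at least as large. This is precisely the assertion of Theorem \ref{thm:ploski}.

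The real difficulty is thus not in Theorem \ref{thm:ploskibis} itself but in Płoski's underlying inequality, whose proof is deferred to Section \ref{sec:proofthm} and relies on comparing pairwise contact orders of Newton–Puiseux parametrizations via the Eggers–Wall tree; the reformulation step above merely repackages that analytic content as a metric statement.
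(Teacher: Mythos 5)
Your proposal is correct and follows essentially the same route as the paper, which presents Theorem \ref{thm:ploskibis} as the reformulation of Theorem \ref{thm:ploski} obtained by inverting the fractions, with the substantive inequality deferred to the Eggers--Wall tree argument of Section \ref{sec:proofthm}. Your only addition is to spell out the elementary axioms and the isosceles-triangle equivalence (Proposition \ref{prop:equivultra}) that the paper leaves implicit, and these details are handled correctly.
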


What does it mean that a function is an ultrametric? We explain this in the next section and 
we show how to think topologically about ultrametrics on finite sets in terms of 
certain kinds of decorated rooted trees. This way of thinking is used then in Section \ref{sec:proofthm} 
in order to prove the reformulation \ref{thm:ploskibis} of P\l oski's theorem.

\section{Ultrametrics and rooted trees}
\label{sec:ultramtrees}

In this section we define the notion of \emph{ultrametric} and we explain how to think 
about an ultrametric on a finite set in topological terms, as a special kind of 
rooted and decorated tree. This passes through understanding that the closed 
balls of an ultrametric form a \emph{hierarchy} and that finite hierarchies 
are equivalent to special types of decorated rooted trees. For more details, one may consult 
\cite[Sect. 3.1]{GBGPPP 18}. 
\medskip

\begin{definition}  \label{def:ultram}  \index{distance!ultrametric}
     Let $(M,d)$ be a metric space. It is called {\bf ultrametric} \index{ultrametric} \index{ultrametric!space} 
     if one has the following strong 
     form of the triangle inequality:
          $$d(A,B) \leq \max \{ d(A,C), d(B, C) \}, \mbox{ for all } A, B, C \in M. $$
      In this case, one says also that $d$ is {\bf an ultrametric} on the set $M$. 
\end{definition}

In any metric space $(M, d)$, a {\bf closed ball} is a subset of $M$ of the form:
    \[  \boxed{\cB(A, r)} := \{P \in M, \:  d(P, A) \leq r\}\]
 where the {\bf center} $A \in M$ and the {\bf radius} $r \in [0, \infty)$ are given. 
 As we will see shortly, given a closed ball, neither its center nor its radius 
 are in general well-defined, contrary to an intuition educated only by Euclidean geometry. 

One has the following characterizations of ultrametrics: 

\begin{proposition}   \label{prop:equivultra}
     Let $(M,d)$ be a metric space. Then the following properties are equivalent:
        \begin{enumerate}
             \item  $(M,d)$ is ultrametric.
             \item  The triangles are all isosceles with two equal sides not less than the third side. 
             \item All the points of a closed ball are centers of it. 
             \item  Two closed balls are either disjoint, or one is included in the other. 
        \end{enumerate}
\end{proposition}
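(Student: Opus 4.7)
The strategy is to establish the cycle of implications $(1) \Rightarrow (2) \Rightarrow (3) \Rightarrow (4) \Rightarrow (1)$. Each link is short; the only care required is in tracking which of the three pairwise distances in a given triple is the largest.

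The equivalence $(1) \Leftrightarrow (2)$ is essentially a rephrasing. For $(1) \Rightarrow (2)$, I would fix three points and assume without loss of generality that $d(A,B)$ is the largest of the three pairwise distances. The ultrametric inequality forces $d(A,B) \leq \max\{d(A,C), d(B,C)\}$, hence $d(A,B)$ must actually equal one of those two; the third distance is then automatically $\leq$ the two equal ones, exactly as (2) demands. The converse is immediate: property (2) says that the maximum distance in every triple is attained at least twice, which is just a restatement of the ultrametric inequality.

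For $(1) \Rightarrow (3)$, let $\cB(A,r)$ be a closed ball and take $A' \in \cB(A,r)$. For every $P \in \cB(A,r)$, the ultrametric inequality gives $d(P,A') \leq \max\{d(P,A), d(A,A')\} \leq r$, whence $\cB(A,r) \subseteq \cB(A',r)$; swapping $A$ and $A'$ (noting $A \in \cB(A',r)$ by symmetry of $d$) yields equality. For $(3) \Rightarrow (4)$, if two closed balls share a point $P$, then by (3) each of them may be re-centered at $P$, and two closed balls with a common center are tautologically comparable by inclusion. Finally, for $(4) \Rightarrow (1)$, given $A, B, C$, I would set $r := \max\{d(A,C), d(B,C)\}$; then $C \in \cB(A,r) \cap \cB(B,r)$, so by (4) one of these balls is contained in the other, which in either case forces $d(A,B) \leq r$.

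I do not foresee any serious obstacle: every step is a one-line manipulation of the triangle inequality. The conceptual pivot is the observation underlying $(1) \Rightarrow (3)$, namely that the ultrametric inequality promotes any point of a closed ball to a center; this is the key feature distinguishing ultrametric from Euclidean geometry, and all the other equivalences flow easily from it.
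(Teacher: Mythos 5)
Your proof is correct and complete: each implication in the chain $(1)\Leftrightarrow(2)$, $(1)\Rightarrow(3)\Rightarrow(4)\Rightarrow(1)$ is carried out accurately, including the only delicate points (re-centering a ball at an arbitrary one of its points, and the choice $r=\max\{d(A,C),d(B,C)\}$ in the last step). The paper itself gives no argument here -- it explicitly leaves all four equivalences as an exercise to the reader -- so your write-up simply supplies the standard proof that was omitted.
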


\begin{proof}
    All the equivalences are elementary but instructive to check. We leave their proofs as 
    exercises. 
\end{proof}

\begin{example}  \label{ex:fourpointum}
     Consider a set $M = \{A, B, C, D\}$ and a distance function $d$ on it such that:
        $d(B, C) =1, d(A,B) = d(A,C) = 2, d(A,D) = d(B, D) = d(C, D) = 5$. Note that one may 
        embed $(M, d)$ isometrically into a $3$-dimensional Euclidean space by choosing 
        an isosceles triangle $ABC$ with the given edge lengths, and by choosing then 
        the point $D$ on the perpendicular to the plane of the triangle passing through 
        its circumcenter. Let us look for the closed balls of this finite metric space. 
        For radii less than $1$, they are singletons. For radii in the interval 
        $[1, 2)$, we get the sets $\{B,C\}, \{A\}, \{D\}$. Note that both $B$ and $C$ are centers 
        of the ball $\{B,C\}$, that is, $\cB(B, r) = \cB(C, r) = \{B,C\}$ for every $r \in [1,2)$. 
        Once the radius belongs to the interval $[2, 5)$, the balls are $\{A, B, C\}$ and $\{D\}$. 
        Finally, for every radius $r \in [5, \infty)$, there is only one closed ball, the whole set. 
        Figure \ref{fig:ballsfour} depicts the set $\{A, B, C, D\}$ as well as the mutual 
        distances and the associated set of closed balls.
\end{example}

\begin{figure}[h!]
    \begin{center}
\begin{tikzpicture}[scale=0.4]

    \node[draw,circle, inner sep=2pt,color=black, fill=black] at (6.5,-1){};
     \node[draw,circle, inner sep=2pt,color=black, fill=black] at (9.5,-1){};
     \node[draw,circle, inner sep=2pt,color=black, fill=black] at (8, 3){};
     \node[draw,circle, inner sep=2pt,color=black, fill=black] at (15, 1){};
    
     \node[draw,circle, inner sep=5pt,color=black] at (6.5,-1){};
      \node[draw,circle, inner sep=5pt,color=black] at (9.5,-1){};
      \node[draw,circle, inner sep=5pt,color=black] at (8, 3){};
       \node[draw,circle, inner sep=5pt,color=black] at (15, 1){};
      
    \node[draw,circle, inner sep=25pt,color=black] at (8,-1){};
     \node[draw,circle, inner sep=45pt,color=black] at (8,0){};
    \node[draw,circle, inner sep=65pt,color=black] at (10,0){};
    
    \draw [-, color=black, dashed](6.5,-1) -- (9.5,-1) -- (15,1) -- (8,3);
     \draw [-, color=black, dashed](15, 1) -- (6.5,-1) -- (8,3) -- (9.5, -1);
     
     \node [left] at (7.5,3) {$A$};
      \node [below] at (6.5, -1.5) {$B$};
       \node [below] at (9.5, -1.5) {$C$};
       \node [below] at (15, 0.5) {$D$};
       
        \node [below] at (8,-1) {$1$};
      \node [left] at (7.2, 0.8) {$2$};
       \node [right] at (8.8, 0.8) {$2$};
       \node [below] at (12.5, 0.2) {$5$};
        \node [below] at (11.7, 1.4) {$5$};
       \node [below] at (12.3, 2.9) {$5$};    
    
    \end{tikzpicture}
\end{center}
\caption{The balls of an ultrametric space with four points} 
 \label{fig:ballsfour}
   \end{figure}
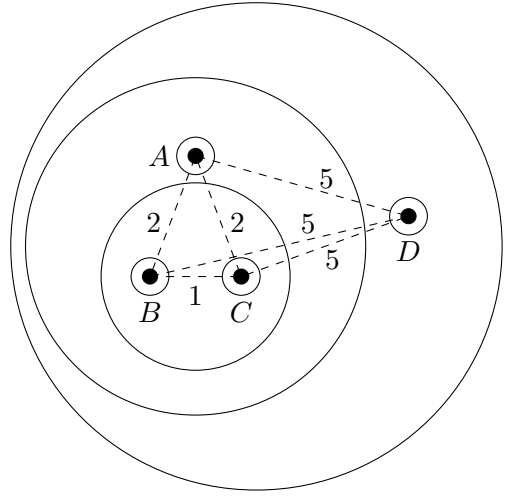

Example \ref{ex:fourpointum}  illustrates the fact that neither the center nor the radius of a closed ball 
of a finite ultrametric space is well-defined, once the ball has more than one element. 
Instead, every closed ball has a well-defined \emph{diameter}:

\begin{definition}  \label{def:diam}   \index{diameter!of a closed ball} 
  The {\bf diameter} of a closed ball in a finite metric space is the maximal distance between pairs of 
  not necessarily distinct points of it.  
\end{definition}

The last characterization of ultrametrics 
in Proposition \ref{prop:equivultra}  shows that the set $\boxed{\mathrm{Balls}(M,d)}$  
of closed balls of an 
ultrametric space $(M,d)$ is a \emph{hierarchy} on $M$, in the following sense:

\begin{definition}  \label{def:hier}  \index{hierarchy!on a set} 
     A {\bf hierarchy} on a set $M$ is a subset $\cH$ of its power set $\cP(M)$, 
     satisfying the following properties:
        \begin{itemize}
            \item   $\emptyset \notin \cH$. 
            \item  The singletons belong to $\cH$. 
            \item  $M$ belongs to $\cH$. 
            \item Two elements of $\cH$ are either disjoint, or one is included into the other. 
        \end{itemize}
\end{definition}

If $\cH$ is a hierarchy on a set $M$, it may be endowed with the inclusion partial order. 
We will consider instead its reverse partial order $\boxed{\preceq_{\cH}}$, defined by:
  \[ A \preceq_{\cH} B \:  \Longleftrightarrow \:   A \supseteq B, \:   \mbox{ for all } A, B \in \cH. \]
 Reversing the inclusion partial order has the advantage of identifying the leaves of the 
 corresponding rooted tree with the maximal elements of the poset $(\cH, \preceq_{\cH})$ 
 (see Proposition \ref{prop:treehier} below).

When $M$ is \emph{finite}, one may represent the poset $(\cH, \preceq_{\cH})$ 
using its associated {\em Hasse diagram}: 

\begin{definition} \label{def:hassediag} 
    Let $(X, \preceq)$ be a finite poset. Its {\bf Hasse diagram}  \index{Hasse diagram} 
    is the directed graph whose set of vertices is $X$, two vertices $a,b \in X$ 
    being joined by an edge  oriented from $a$ to $b$ 
    whenever $a \prec b$ and the two points are {\bf directly comparable}, 
    \index{direct comparability} 
    that is,  there is no other element of $X$ lying strictly between them. 
\end{definition}

Hasse diagrams of finite posets are abstract oriented acyclic graphs. This means that they have 
no directed cycles, which is a consequence of the fact that a partial order is antisymmetric and 
transitive. Hasse diagrams are not necessarily planar, but, as all finite graphs, they 
may be always immersed in the plane in such a way that any pair of edges intersect transversely.  
When drawing a Hasse diagram in the plane as an immersion, 
    we will use the convention to place the vertex $a$ of the Hasse diagram  
    \emph{below} the vertex $b$ whenever $a \prec b$. This is always 
    possible because of the absence of directed cycles. This convention makes unnecessary 
    adding arrowheads along the edges in order to indicate their orientations. 

\begin{example}  \label{ex:div12}
   Consider the finite set $\{1, 2, 3, 4, 6, 12\}$ of positive divisors of $12$, partially ordered by divisibility: 
   $a \preceq b$ if and only if $a$ divides $b$. Its Hasse diagram is drawn in 
   Figure \ref{fig:Hassediag12}. 
\end{example}

 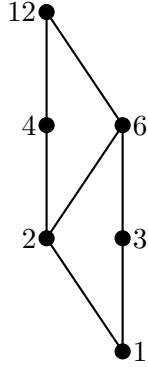
\begin{figure}[h!]
    \begin{center}
\begin{tikzpicture}[scale=0.5]

    \node[draw,circle, inner sep=2pt,color=black, fill=black] at (0,10){};
     \node[draw,circle, inner sep=2pt,color=black, fill=black] at (0,7){};
     \node[draw,circle, inner sep=2pt,color=black, fill=black] at (0, 4){};
     \node[draw,circle, inner sep=2pt,color=black, fill=black] at (2, 7){};
    \node[draw,circle, inner sep=2pt,color=black, fill=black] at (2, 4){};
     \node[draw,circle, inner sep=2pt,color=black, fill=black] at (2, 1){};
         
    \draw [-, color=black, thick](0,10) -- (0,4) -- (2, 1) -- (2,7) -- (0,10);
     \draw [-, color=black, thick](2, 7) -- (0,4) ;
         
     \node [left] at (0,10) {$12$};
      \node [left] at (0, 7) {$4$};
       \node [left] at (0, 4) {$2$};
       \node [right] at (2, 7) {$6$};
       \node [right] at (2, 4) {$3$};   
      \node [right] at (2, 1) {$1$};
    
    \end{tikzpicture}
\end{center}
\caption{The Hasse diagram of the set of positive divisors of $12$.} 
 \label{fig:Hassediag12}
   \end{figure}

The Hasse diagrams of finite hierarchies are special kinds of graphs:

\begin{proposition}  \label{prop:treehier}
  The Hasse diagram of a hierarchy $(\cH, \preceq_H)$ on a finite set $M$ is a tree in which the maximal 
  directed paths start from $M$ and terminate at the singletons. Moreover, for each vertex 
  which is not a singleton, there are at least two edges starting from it. 
\end{proposition}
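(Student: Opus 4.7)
The plan is to extract two structural facts from the hierarchy axioms and then read off the three assertions. The two facts are: (i) every $A \in \cH \setminus \{M\}$ has a unique minimal $B \in \cH$ strictly containing it, its \emph{inclusion parent}; and (ii) every non-singleton $A \in \cH$ has at least two maximal elements of $\cH$ strictly contained in it, its \emph{inclusion children}.

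For (i), existence of a minimal $B \supsetneq A$ in $\cH$ follows from $M \supsetneq A$ together with the finiteness of $\cH$. For uniqueness, if $B_1, B_2$ are two such minima then $B_1 \cap B_2 \supseteq A$ is nonempty, so the fourth hierarchy axiom forces $B_1 \subseteq B_2$ or the reverse, and minimality then gives $B_1 = B_2$. For (ii), given any $x \in A$, finiteness yields a saturated chain $\{x\} = C_0 \subsetneq C_1 \subsetneq \cdots \subsetneq C_n = A$ inside $(\cH, \subseteq)$, and its penultimate term $C_{n-1}$ is then an inclusion child of $A$ containing $x$. Applying this to some $x \in A$ produces a child $B_1$, and then applying it to some $x' \in A \setminus B_1$ (nonempty because $B_1 \subsetneq A$) produces a second child $B_2$ with $x' \in B_2$; since $x' \notin B_1$, the children $B_1$ and $B_2$ are distinct.

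These two facts yield the proposition. Fact (i) attaches exactly one ``upward'' edge of the Hasse diagram to each non-$M$ vertex, namely the edge toward its unique inclusion parent, so the Hasse diagram has $|\cH| - 1$ edges; iterating parent lookups from any vertex reaches $M$, which gives connectedness, and a connected graph on $|\cH|$ vertices with $|\cH|-1$ edges is a tree. The maximal directed paths in the Hasse diagram are the saturated chains of $(\cH, \preceq_\cH)$, which begin at the unique $\preceq_\cH$-minimum and end at the $\preceq_\cH$-maxima; the former is $M$, and the latter are exactly the singletons, because any non-singleton contains its own singletons as strict subsets in $\cH$ and therefore is not $\preceq_\cH$-maximal. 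The last assertion is simply fact (ii) translated through the Hasse-diagram convention: the edges starting from $A$ go to the elements $B$ with $A \prec_\cH B$ directly comparable, which are precisely the inclusion children of $A$.

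I foresee no serious obstacle. The only delicate point is the uniqueness half of (i), where the key observation is that two candidate parents have nonempty intersection (it contains $A$), which activates the fourth hierarchy axiom. Everything else is a direct unwinding of the axioms or a standard fact about finite posets.
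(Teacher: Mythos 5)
Your proposal is correct and follows essentially the same route as the paper's sketch: the tree structure comes from the fourth hierarchy axiom forcing the supersets of a given element to be totally ordered (your unique inclusion parent), and the second child is produced exactly as in the paper, by taking a point of $A$ outside the first child and running the saturated-chain argument again. The only cosmetic difference is that you conclude the tree property by an edge count, where the paper simply reads it off from the unique-parent structure.
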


\begin{proof} We sketch a proof, leaving the details to the reader. 

    The first statement results from the fact that the singletons of $M$ are exactly the 
    maximal elements of the poset $(\cH, \preceq_H)$, that $M$ itself is the unique minimal element 
    and that all the elements of a hierarchy which contain a given element are 
    totally ordered by inclusion. 
    
    Let us prove the second statement. Consider $B_1\in \cH$ and assume that it is not a 
    singleton. This means that it is not minimal for inclusion, therefore there exists $B_2 \in \cH$ 
    such that $B_2 \subsetneq B_1$ and $B_2$ is directly comparable to $B_1$. Let $A$ 
    be a point of $B_1 \: \setminus B_2$. Consider  $B_3 \in \cH$ which contains the 
    point $A$, is included into 
    $\cB_1$ and is directly comparable to it. As $A \in B_3 \: \setminus \: B_2$, this shows that 
    $B_3$ is not included in $B_2$. We want to show that the two sets $B_2$ and $B_3$ 
    are disjoint. Otherwise, 
    by the definition of a hierarchy, we would have $B_2 \subsetneq B_3 \subsetneq B_1$, 
    which contradicts the assumption that $B_1$ and $B_2$ are directly comparable. 
\end{proof}

\begin{example}  \label{ex:hassediaghier}
     Consider the ultrametric space of Example \ref{ex:div12}, 
     represented in Figure \ref{fig:ballsfour}. We repeat it on the left 
     of Figure \ref{fig:treeballs}. The Hasse diagram of the 
     hierarchy of its closed balls is drawn on the right of Figure \ref{fig:treeballs}. Near each vertex 
     is represented the diameter of the corresponding ball. We have added a 
     \emph{root vertex}, connected to the vertex representing the whole set. 
     It may be thought as a larger ball, obtained by adding formally to $M= \{A, B, C, D\}$ 
     a point  $\omega$, infinitely distant from each point of $M$. This larger ball 
     is the set $\overline{M} := M \cup \{\omega\}$.      
\end{example}

    \begin{figure}[h!]
    \begin{center}
\begin{tikzpicture}[scale=0.5]

\begin{scope}[shift={(-25,6)}]

 \node[draw,circle, inner sep=2pt,color=black, fill=black] at (6.5,-1){};
     \node[draw,circle, inner sep=2pt,color=black, fill=black] at (9.5,-1){};
     \node[draw,circle, inner sep=2pt,color=black, fill=black] at (8, 3){};
     \node[draw,circle, inner sep=2pt,color=black, fill=black] at (15, 1){};
    
     \node[draw,circle, inner sep=5pt,color=black] at (6.5,-1){};
      \node[draw,circle, inner sep=5pt,color=black] at (9.5,-1){};
      \node[draw,circle, inner sep=5pt,color=black] at (8, 3){};
       \node[draw,circle, inner sep=5pt,color=black] at (15, 1){};
      
    \node[draw,circle, inner sep=25pt,color=black] at (8,-1){};
     \node[draw,circle, inner sep=45pt,color=black] at (8,0){};
    \node[draw,circle, inner sep=65pt,color=black] at (10,0){};
    
    \draw [-, color=black, dashed](6.5,-1) -- (9.5,-1) -- (15,1) -- (8,3);
     \draw [-, color=black, dashed](15, 1) -- (6.5,-1) -- (8,3) -- (9.5, -1);
     
     \node [left] at (7.5,3) {$A$};
      \node [below] at (6.5, -1.5) {$B$};
       \node [below] at (9.5, -1.5) {$C$};
       \node [below] at (15, 0.5) {$D$};
       
        \node [below] at (8,-1) {$1$};
      \node [left] at (7.2, 0.8) {$2$};
       \node [right] at (8.8, 0.8) {$2$};
       \node [below] at (12.8, 0.2) {$5$};
        \node [below] at (11.7, 1.4) {$5$};
       \node [below] at (12.3, 2.9) {$5$};    

\end{scope}

 \draw [->, color=blue, thick] (-6, 6) -- (-4, 6) ;

    \node[draw,circle, inner sep=2pt,color=black, fill=black] at (0,10){};
     \node[draw,circle, inner sep=2pt,color=black, fill=black] at (2,10){};
     \node[draw,circle, inner sep=2pt,color=black, fill=black] at (4, 10){};
     \node[draw,circle, inner sep=2pt,color=black, fill=black] at (8, 10){};

    \draw [-, color=black, thick](0,10) -- (3,4) -- (8, 10);
     \draw [-, color=black, thick](2, 10) -- (3,8) -- (4, 10);
     \draw [-, color=black, thick](3,8) -- (2,6);
     \draw [-, color=black, thick](3, 4) -- (3,2);
     
      \draw [-, color=black, very thick](2, 2) -- (4,2);
      
      \node[draw,circle, inner sep=2pt,color=black, fill=white] at (3,8){};
      \node[draw,circle, inner sep=2pt,color=black, fill=white] at (2,6){};
      \node[draw,circle, inner sep=2pt,color=black, fill=white] at (3, 4){};
     
     \node [above] at (0,10.5) {$\{A\}$};
      \node [above] at (2, 10.5) {$\{B\}$};
       \node [above] at (4, 10.5) {$\{C\}$};
       \node [above] at (8, 10.5) {$\{D\}$};
       
        \node [right] at (3.2, 8) {$\{B, C\}$};
        \node [left] at (1.8, 6) {$\{A, B, C\}$};  
         \node [right] at (3.2, 4) {$\{A, B, C, D\}$};
          \node [right] at (3.2, 2.5) {$\{A, B, C, D, \omega\}$};
       
        \node [left] at (0,10) {$0$};
      \node [left] at (2, 10) {$0$};
       \node [left] at (4, 10) {$0$};
       \node [left] at (8, 10) {$0$};
       
        \node [left] at (3, 8) {$1$};
       \node [right] at (2, 6) {$2$};   
        \node [left] at (3, 4) {$5$};
       \node [above] at (2.5, 2) {$\infty$};    
    
    \end{tikzpicture}
\end{center}
\caption{The tree of the hierarchy of closed balls of Example \ref{ex:hassediaghier}} 
 \label{fig:treeballs}
   \end{figure}
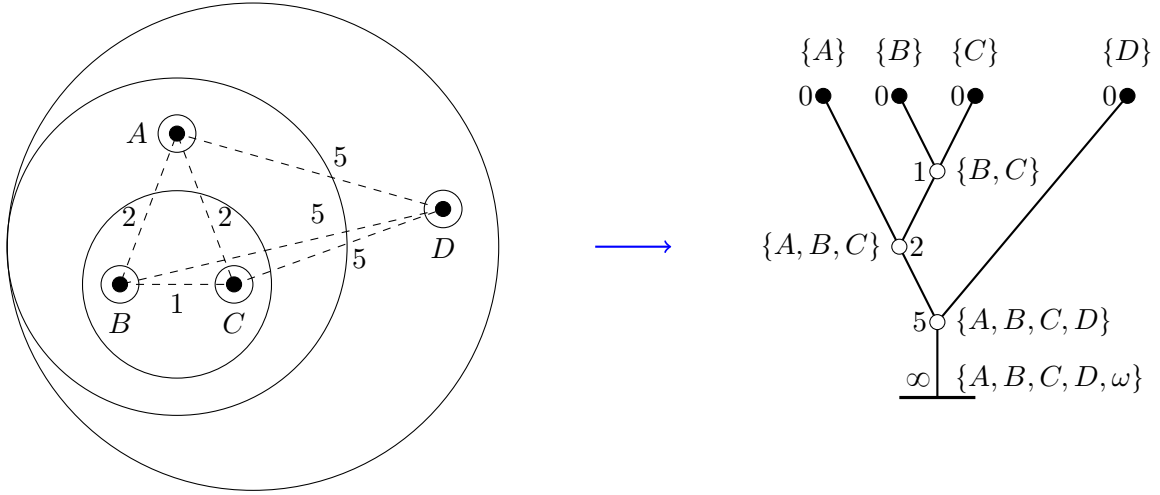

    One may formalize in the following way the construction performed in Example \ref{ex:hassediaghier}:
    
    \begin{definition}   \label{def:treehierarchy}  \index{tree!of a hierarchy} 
         The {\bf tree} of a hierarchy $(\cH, \preceq_H)$ on a finite poset $M$ is its Hasse diagram,  
         completed with a root  representing the set $\boxed{\overline{M}} := M \cup \{\omega\}$, 
         joined with the vertex representing $M$ and rooted at $\overline{M}$. Here $\omega$ 
         is a point distinct from the points of $M$. 
    \end{definition}
    
    The tree of a hierarchy is a \emph{rooted tree} in the following sense:
    
    \begin{definition}  \label{def:rootree}  \index{tree!rooted}
    A {\bf rooted tree} is a tree with a distinguished vertex, called its {\bf root}.   \index{root!of a tree}
    If $\Theta$ is a rooted tree with root $r$, then the vertex set of $\Theta$ gets 
    partially ordered by declaring that $\boxed{a \preceq_r b}$ if and only if the unique segment 
    $[r, a]$ joining $r$ to $a$ in the tree is contained in $[r,b]$. 
    \end{definition}
    
    When $\Theta$ is 
    the rooted tree of a hierarchy $\cH$ on a finite set $M$, then the partial order 
    $\preceq_{\overline{M}}$ defined by choosing $\overline{M}$ as root 
    restricts to the partial order $\preceq_{\cH}$ if one identifies 
    the set $\cH$ with the set of vertices of $\Theta$ which are distinct from the root. 
    
   Proposition \ref{prop:treehier} may be reformulated in the following way 
   as a list of properties of the tree of the hierarchy:
   
   \begin{proposition} \label{prop:reformtree}
          Let $\Theta$ be the tree of a hierarchy on a finite set, and let $r$ be its root. 
          Then $r$ is a vertex of valency $1$ and there are no vertices of valency $2$. 
   \end{proposition}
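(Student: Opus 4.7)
The plan is to derive both claims directly from Proposition \ref{prop:treehier} and the construction of $\Theta$ in Definition \ref{def:treehierarchy}, by splitting the vertices of $\Theta$ into a few classes and tracking how many edges are incident to each.

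The valency of $r$ is immediate from the construction: the vertex $\overline{M}$ is joined to $\Theta$ only by the single edge going to the vertex representing $M$, and by no other edge, so its valency equals $1$.

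For the second claim I would fix an arbitrary vertex $B \in \cH$ and count its incident edges in $\Theta$ in two parts. The first part consists of the edges joining $B$ to its children in the rooted tree, equivalently to those elements of $\cH$ strictly contained in $B$ and directly comparable to it in the hierarchy. The second part consists of the single edge joining $B$ to its parent, which is either the smallest element of $\cH$ strictly containing $B$ (when $B \neq M$), or the root $r$ itself (when $B = M$). That the parent is unique follows at once from the hierarchy axiom: two elements of $\cH$ are either disjoint or comparable, so the elements of $\cH$ strictly containing $B$ are linearly ordered by inclusion, and hence possess a smallest one as soon as $B \neq M$.

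It then remains to split into cases. If $B$ is a singleton, there is no element of $\cH$ strictly contained in $B$ (since $\emptyset \notin \cH$), so $B$ is a leaf of the Hasse diagram and has valency exactly $1$. Otherwise, the second assertion of Proposition \ref{prop:treehier} supplies at least two children of $B$, which together with the unique parent edge yield valency at least $3$. Consequently every vertex of $\Theta$ has valency $1$ or at least $3$, which forbids valency $2$; the substance of the argument is already contained in Proposition \ref{prop:treehier}, and the only thing left to verify is that adjoining the root preserves the property — which it trivially does, since the new edge raises the valency of the vertex $M$ from at least $2$ to at least $3$.
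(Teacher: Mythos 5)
Your proof is correct and follows the route the paper intends: the paper presents this proposition as an immediate reformulation of Proposition \ref{prop:treehier} together with the root construction of Definition \ref{def:treehierarchy}, and you simply make the edge count explicit (one parent edge plus either zero children for singletons or at least two children otherwise). The only microscopic imprecision is the final remark that the root edge raises the valency of $M$ ``from at least $2$ to at least $3$,'' which presumes $M$ is not itself a singleton, but in that degenerate case $M$ gets valency $1$ and the conclusion still holds.
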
 
   
   This proposition motivates the following definition:
   
   \begin{definition}  \label{def:hiertree}   \index{tree!hierarchical}
      A rooted tree whose root is of valency $1$ and which does not possess vertices 
      of valency $2$ is a {\bf hierarchical tree}. The {\bf hierarchy} of a hierarchical tree $(\Theta, r)$ 
      is constructed in the following way: \index{hierarchy!of a hierarchical tree}
      \begin{itemize}
          \item  Define $M$ to be the set of {\bf leaves} of the rooted tree $(\Theta, r)$, that is, the set 
             of vertices of valency $1$ which are distinct from the root $r$. \index{leaf!of a tree}
          \item For each vertex $p$ of $\Theta$ different from the root, consider the subset of $M$ 
              consisting of the leaves $a$ such that $p \preceq_r a$. 
      \end{itemize}
   \end{definition}
      
      We leave as an exercise to prove:
      
      \begin{proposition}  \label{prop:equivconstr}
             The constructions of Definitions \ref{def:treehierarchy} and \ref{def:hiertree}, 
             which associate a hierarchical tree 
             to a hierarchy on a finite set and a hierarchy to a hierarchical tree are inverse of 
             each other. 
      \end{proposition}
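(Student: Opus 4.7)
The plan is to verify both compositions of the two constructions return the original object up to the natural notion of isomorphism. Throughout I will keep identifications explicit since most of the work is bookkeeping.

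I would start with the direction hierarchy $\to$ tree $\to$ hierarchy. Let $\cH$ be a hierarchy on a finite set $M$, and let $\Theta$ be its tree as in Definition \ref{def:treehierarchy}. By Proposition \ref{prop:treehier}, the leaves of $\Theta$ (the valency-$1$ vertices other than the root $\overline{M}$) are exactly the singletons of $M$, so the canonical bijection $\{x\} \leftrightarrow x$ identifies the leaf set with $M$. For a non-root vertex $p$ of $\Theta$, corresponding to some $B \in \cH$, the partial order $\preceq_r$ on $\Theta$ restricts (on non-root vertices) to $\preceq_{\cH}$, so the condition $p \preceq_r \{x\}$ becomes $B \supseteq \{x\}$, i.e.\ $x \in B$. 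Thus the subset of $M$ associated with $p$ in the construction of Definition \ref{def:hiertree} is exactly $B$, and this direction is recovered.

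For the converse direction tree $\to$ hierarchy $\to$ tree, let $(\Theta, r)$ be a hierarchical tree with leaf set $M$, and for each non-root vertex $p$ set $B_p := \{a \in M : p \preceq_r a\}$. I would first verify that $\{B_p\}$ is indeed a hierarchy on $M$: non-emptiness comes from the finiteness of $\Theta$ (every non-root vertex dominates at least one leaf); every singleton $\{a\}$ equals $B_a$; the set $M$ itself equals $B_{r'}$, where $r'$ is the unique neighbor of $r$ (using the valency-$1$ condition at the root); and two sets $B_p, B_q$ are nested or disjoint because on a tree, the sets of descendants of two vertices are always in one of these three positions. Then I would produce the isomorphism between $\Theta$ and the tree of $\cH := \{B_p\}$ by sending $p \mapsto B_p$ (non-root vertices) and $r \mapsto \overline{M}$.

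The technical heart of the argument, and the step I expect to require the most care, is showing that the map $p \mapsto B_p$ is a bijection from non-root vertices of $\Theta$ onto $\cH$ that sends tree-edges to Hasse-diagram edges. Surjectivity is definitional. For injectivity, suppose $B_p = B_q$ with $p \neq q$; one of them, say $p$, lies strictly between the root and $q$ on the path, so $q$ is the unique child of $p$ in the subtree rooted above $p$, forcing $p$ to have total valency $2$, which contradicts the assumption that $\Theta$ is hierarchical. This is precisely the point where the no-valency-$2$ hypothesis enters, and it is essential. Finally, an edge of $\Theta$ connects a vertex $p$ to a directly $\preceq_r$-larger vertex $q$ if and only if there is no vertex of $\Theta$ strictly between them, equivalently no $B \in \cH$ strictly between $B_p$ and $B_q$, which is the definition of an edge of the Hasse diagram. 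Combining these observations yields an isomorphism of rooted trees, completing the verification.
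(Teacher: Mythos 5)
Your proof is correct and complete; note that the paper deliberately leaves this statement as an exercise, so there is no proof in the text to compare against, but your argument is the natural one and fills the gap properly. Both directions are handled, and you correctly identify the one genuinely delicate point: injectivity of $p \mapsto B_p$ hinges on the no-valency-$2$ hypothesis, since $B_p = B_q$ with $p \prec_r q$ forces every leaf above $p$ to lie above $q$, hence forces $p$ to have a single child. The only phrase I would tighten is ``$q$ is the unique child of $p$'': $q$ need not be adjacent to $p$; the correct statement is that the child of $p$ on the path toward $q$ is the \emph{only} child of $p$ (otherwise a leaf above another child would lie in $B_p \setminus B_q$), which is what yields valency $2$ and the contradiction.
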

      
      As a preliminary to the proof, one may test the truth of the proposition on the 
      example of Figure \ref{fig:treeballs}. 
      
      \medskip
      Let us return to finite ultrametric spaces $(M, d)$. We saw that the set 
      $\mathrm{Balls}(M,d)$ of its closed balls is a hierarchy on $M$. Proposition \ref{prop:equivconstr} 
      shows that one may think about this hierarchy as a special kind of rooted tree, 
      namely, a hierarchical tree. This hierarchical tree alone 
      does not allow to get back the distance function $d$. How to encode it on the tree? 
    
     The idea is to look at the function defined on $\mathrm{Balls}(M,d)$, 
     which associates to each ball its diameter 
     (see Definition \ref{def:diam}): 

\begin{proposition}  \label{prop:hierultr}
      Let $(M,d)$ be a finite ultrametric space. Then the map which sends each closed ball 
      to its diameter is a strictly decreasing $[0, \infty)$-valued 
      function defined on the poset $(\mathrm{Balls}(M,d), \preceq)$, taking the value $0$ 
      exactly on the singletons of $M$. Equivalently, 
      it is a strictly decreasing $[0, \infty]$-valued function on the set of vertices of the 
      tree of the hierarchy, vanishing on the set $M$ of leaves and taking 
      the value $\infty$ on the root. 
\end{proposition}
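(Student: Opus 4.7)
The plan is to prove the poset statement first, and then transfer it to the tree via the bijection supplied by Proposition \ref{prop:equivconstr}. On $(\mathrm{Balls}(M,d), \preceq)$ there are three things to check: that the diameter function is non-increasing with respect to inclusion, that it vanishes precisely on the singletons, and that the inequality becomes strict whenever the inclusion is strict.

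Weak monotonicity is immediate, as every pair of points of a subset is also a pair of points of any superset. The characterization of the singletons is tautological in the easy direction, and in the other direction uses only that two distinct points of a metric space have positive distance. The main obstacle is therefore the strict monotonicity; my plan is to invoke the third characterization in Proposition \ref{prop:equivultra}, namely that in an ultrametric space every point of a closed ball is a center of it. Fix $B_1 \supsetneq B_2$ in $\mathrm{Balls}(M,d)$, and pick one representation $B_2 = \cB(z,s)$. By the cited characterization, $B_2 = \cB(y,s)$ for every $y \in B_2$, and the ultrametric inequality then yields $\mathrm{diam}(B_2) \leq s$. Choosing $x \in B_1 \setminus B_2$, the condition $x \notin \cB(y,s)$ forces $d(x,y) > s$ for every $y \in B_2$, whence
\[
\mathrm{diam}(B_1) \ \geq\ d(x,y) \ >\ s \ \geq\ \mathrm{diam}(B_2).
\]
A potentially delicate point is that the same closed ball may admit several presentations with different radii; however, the argument only needs the existence of one such presentation, so this ambiguity does not intervene.

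Finally, the tree reformulation is obtained by transporting the diameter along the bijection between $\mathrm{Balls}(M,d)$ and the non-root vertices of the hierarchical tree supplied by Proposition \ref{prop:equivconstr}. The leaves of the tree correspond to the singletons of $M$, on which the diameter vanishes, and declaring the value at the root $\overline{M}$ to be $\infty$ is consistent with the convention of Example \ref{ex:hassediaghier} that $\omega$ is infinitely distant from the points of $M$. The strict monotonicity established on $(\cH, \preceq_{\cH})$ then translates directly into strict monotonicity along every root-to-leaf geodesic in the tree, completing the verification.
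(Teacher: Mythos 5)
Your proof is correct; the paper states Proposition \ref{prop:hierultr} without proof, and your argument is the natural one. The key step — using characterization (3) of Proposition \ref{prop:equivultra} to get $d(x,y)>s\geq \mathrm{diam}(B_2)$ for $x\in B_1\setminus B_2$ and any $y\in B_2$ — is sound, and you rightly observe that the non-uniqueness of the radius of a ball is harmless because a single presentation $B_2=\cB(z,s)$ suffices.
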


As an example, one may look again at Figure \ref{fig:treeballs}. The value taken by the 
previous diameter function is written near each vertex of the hierarchical tree. 

If $(\Theta, r)$ is a hierarchical tree, denote by $\boxed{V(\Theta)}$ its set of vertices and by 
$\boxed{a \wedge_r b}$ the infimum of $a$ and $b$ relative to $\preceq_r$, whenever 
$a,b \in V(\Theta)$. This infimum may be characterized by the property that 
$[r, a] \cap [r,b] = [r, a \wedge_r b]$.  The following is a converse of Proposition \ref{prop:hierultr}: 

\begin{proposition}  \label{prop:convultram}
     Let $(\Theta, r)$ be a hierarchical tree and $\lambda : V(\Theta) \to [0, \infty]$ 
     be a strictly decreasing function relative to the partial 
     order $\preceq_r$ on $\Theta$ induced by the root. Assume that $\lambda$ 
     vanishes on the set $M$ of leaves of 
     $\Theta$ and takes the value $\infty$ at $r$. Then the map 
          $$\begin{array}{lccc}
                   d: &   M \times M & \to & [0, \infty) \\
                       &   (a,b)                   & \to &  \lambda(a \wedge_r b) 
              \end{array}$$
      is an ultrametric on $M$. 
\end{proposition}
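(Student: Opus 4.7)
The plan is to verify in turn the three axioms of an ultrametric. I first argue that $d$ is well-defined as a function $M \times M \to [0, \infty)$ that vanishes precisely on the diagonal. Since $r$ has valency $1$, its unique neighbor $r'$ lies on every segment $[r,v]$ with $v \neq r$, so $r' \preceq_r v$ for all $v \neq r$. In particular, for any two leaves $a, b$ one has $a \wedge_r b \succeq_r r'$, hence $a \wedge_r b \neq r$, and strict monotonicity gives $\lambda(a \wedge_r b) < \lambda(r) = \infty$. Two distinct leaves cannot be comparable in $\preceq_r$ (leaves are maximal), so when $a \neq b$ the infimum $a \wedge_r b$ is neither $a$ nor $b$, nor any other leaf (since $a \wedge_r b \preceq_r a$ and $a \wedge_r b \preceq_r b$). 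Therefore $a \wedge_r b$ is not a leaf; strict monotonicity and vanishing on leaves then yield $\lambda(a \wedge_r b) > 0$. The case $a = b$ gives $a \wedge_r a = a \in M$, so $d(a,a) = \lambda(a) = 0$. Symmetry is immediate since $a \wedge_r b = b \wedge_r a$.

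For the strong triangle inequality I would isolate the combinatorial heart as a tree lemma: given three leaves $a, b, c$, among the three infima $a \wedge_r b$, $b \wedge_r c$, $a \wedge_r c$, two coincide and the third is $\succeq_r$ them. Granting this, suppose for instance that $a \wedge_r c = b \wedge_r c = p$ and $a \wedge_r b = q$ with $q \succeq_r p$. The (weak) monotonicity of $\lambda$ then gives $\lambda(q) \leq \lambda(p)$, i.e.\ $d(a,b) \leq d(a,c) = d(b,c)$. Whichever pair of infima coincides, one obtains the isosceles-triangle picture in which two sides are equal and the third is no larger, which by Proposition~\ref{prop:equivultra} is equivalent to the ultrametric inequality $d(x,y) \leq \max\{d(x,z), d(y,z)\}$.

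The main — though still quite elementary — obstacle is the tree lemma. I would prove it as follows. Set $p := (a \wedge_r b) \wedge_r c$, the infimum of the three leaves; this is well-defined because $\wedge_r$ is associative in a tree (in any rooted tree, the set of elements $\preceq_r v$ is totally ordered by $\preceq_r$, so any finite nonempty family has a unique infimum). From $p$ emanate three segments, one toward each of $a, b, c$, and each starts along some edge incident to $p$. These three initial edges cannot all coincide: otherwise the common neighbor of $p$ along them would be a larger common lower bound of $a, b, c$, contradicting the maximality defining $p$. If the three initial edges are pairwise distinct, then the paths from $p$ to the three leaves diverge at $p$ itself, so all three pairwise infima equal $p$. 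Otherwise exactly two of them coincide — say the segments toward $a$ and $c$ begin along the same edge, while the segment toward $b$ uses a different one. Then the paths to $a$ and $c$ share a nontrivial initial arc beyond $p$, whence $a \wedge_r c \succ_r p$, while the path to $b$ leaves $p$ along a different branch from those to $a$ and to $c$, giving $a \wedge_r b = b \wedge_r c = p$. This case analysis completes the proof of the lemma and hence of the proposition.
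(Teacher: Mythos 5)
Your proof is correct and complete: well-definedness and positivity follow as you say from the valency-one root and the maximality of leaves, and the ultrametric inequality reduces to your three-point lemma (two of the three meets coincide with $a\wedge_r b\wedge_r c$ and the third dominates it) together with the weak monotonicity of $\lambda$, exactly the isosceles criterion of Proposition~\ref{prop:equivultra}. The paper does not actually prove Proposition~\ref{prop:convultram} --- it is left, like Proposition~\ref{prop:equivultramtree}, as an exercise with a pointer to B\"ocker--Dress --- and your argument is the standard intended one.
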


 Let us introduce a special name for the functions appearing in Proposition \ref{prop:convultram}:

\begin{definition}   \label{def:depthfunct}  \index{depth function}
     Let $(\Theta, r)$ be a hierarchical tree. A {\bf depth function} on it is a function 
     $\lambda : V(\Theta) \to [0, \infty]$ which satisfies the following properties: 
     \begin{itemize}
       \item it is strictly decreasing relative to the partial 
            order $\preceq_r$ on $\Theta$ induced by the root $r$; 
        \item it vanishes on the set of leaves of $\Theta$; 
        \item it takes the value $\infty$ at the root $r$. 
     \end{itemize}
\end{definition}

Note that the first two conditions of Definition \ref{def:depthfunct} imply 
 that a depth function vanishes \emph{exactly} on the set of leaves of the 
underlying hierarchical tree.

One has the following analog of Proposition \ref{prop:equivconstr}:

\begin{proposition}  \label{prop:equivultramtree}
   The constructions of Propositions \ref{prop:hierultr} and \ref{prop:convultram} are inverse of each other. 
   That is, giving an ultrametric on a finite set $M$ is equivalent to giving a depth function on a 
   hierarchical tree whose set of leaves is $M$.  
\end{proposition}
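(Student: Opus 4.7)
The plan is to verify that the two constructions are mutually inverse by checking each composition separately. I would split the proof into two directions: first, starting from a finite ultrametric space $(M, d)$, pass to the hierarchical tree of $\mathrm{Balls}(M, d)$ equipped with the diameter depth function, then reconstruct the ultrametric via Proposition \ref{prop:convultram} and verify it agrees with $d$; second, starting from a hierarchical tree $(\Theta, r)$ with depth function $\lambda$, form the ultrametric on the leaf set, build the hierarchical tree of its hierarchy of closed balls, and identify the resulting data with $(\Theta, r, \lambda)$.

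For the first direction, identify each $a \in M$ with the leaf $\{a\}$ of the tree of $\mathrm{Balls}(M, d)$. Since $\preceq_r$ on this tree corresponds to reverse inclusion of balls, the meet $\{a\} \wedge_r \{b\}$ is the smallest closed ball containing both $a$ and $b$. Using Proposition \ref{prop:equivultra}, this smallest ball equals $\cB(a, d(a,b))$: a strictly smaller ball containing $a$ would have the form $\cB(a, \rho)$ with $\rho < d(a,b)$ and hence could not contain $b$. Its diameter equals $d(a,b)$, the strong triangle inequality giving the upper bound and the pair $(a,b)$ realizing it. Hence the reconstructed distance is $d'(a,b) = \lambda(\{a\} \wedge_r \{b\}) = d(a,b)$, so $d' = d$.

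For the second direction, let $(\Theta, r, \lambda)$ be as in the hypothesis, $M$ its leaf set, and $d(a,b) := \lambda(a \wedge_r b)$. For each non-root vertex $v \in V(\Theta)$ set $L_v := \{a \in M : v \preceq_r a\}$. The central claim is that
\[ L_v = \cB(a_0, \lambda(v)) \quad \text{for any } a_0 \in L_v. \]
Indeed, for $b \in L_v$ one has $v \preceq_r a_0 \wedge_r b$, hence $d(a_0, b) \leq \lambda(v)$; for $b \notin L_v$, the ancestors of $a_0$ form a chain under $\preceq_r$, so $v$ and $a_0 \wedge_r b$ are comparable, and one checks $a_0 \wedge_r b \prec_r v$ strictly (otherwise $v \preceq_r b$), whence $d(a_0, b) > \lambda(v)$. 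Conversely, every closed ball $\cB(a, \rho)$ coincides with $L_v$ for the unique vertex $v$ on $[r, a]$ that is nearest the root among those with $\lambda(v) \leq \rho$, by strict monotonicity of $\lambda$ along this chain. I would then verify that the assignment $v \mapsto L_v$, extended by $r \mapsto \overline{M}$, is an isomorphism of rooted posets — the non-trivial direction of $v \preceq_r w \Leftrightarrow L_v \supseteq L_w$ uses that if $v, w$ are incomparable in $\Theta$ then any leaf below $w$ lies outside $L_v$ — and that the depth functions match: $\mathrm{diam}(L_v) = \lambda(v)$. The upper bound is immediate; equality requires exhibiting $a, b \in L_v$ with $a \wedge_r b = v$, which is where the hierarchical tree condition enters, since any non-leaf non-root vertex has at least two children and thus descendant leaves in two distinct subtrees whose meet is $v$.

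The main obstacle is the second direction, specifically the passage between the combinatorial data of $\Theta$ and the metric data of $\mathrm{Balls}(M, d)$. The fact that the diameter function on closed balls reproduces $\lambda$ depends crucially on the no-valency-$2$ condition built into the definition of a hierarchical tree; without it, an internal vertex could have a $\lambda$-value that is not attained as a pairwise distance between leaves in its subtree, breaking the correspondence.
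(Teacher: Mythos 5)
The paper does not prove this proposition: it is explicitly left as an exercise, with a pointer to B\"ocker--Dress \cite{BD 98}, so there is no argument of the paper's to compare yours against. Your proof is correct and contains all the essential ideas: the identification of the smallest ball containing $a$ and $b$ with $\cB(a,d(a,b))$ in one direction, and the identification $L_v=\cB(a_0,\lambda(v))$ of descendant-leaf sets with closed balls in the other, together with the observation that the no-valency-$2$ condition is what forces $\mathrm{diam}(L_v)=\lambda(v)$. One small point to make explicit when verifying that $v\mapsto L_v$ is a poset isomorphism: besides the incomparable case, you must rule out $L_v\supseteq L_w$ when $w\prec_r v$ strictly, and this uses the same fact you invoke for the diameter (a non-leaf, non-root vertex has at least two children, hence a descendant leaf lying in $L_w\setminus L_v$); without it, injectivity of $v\mapsto L_v$ is not yet established.
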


It is this proposition which allows to think about an ultrametric as a special kind of rooted and 
decorated tree. We leave its proof as an exercise (see \cite{BD 98}).

\section{A proof of P\l oski's theorem using Eggers-Wall trees}
\label{sec:proofthm}

In this section we sketch a proof of P\l oski's theorem \ref{thm:ploski} using the equivalence 
between ultrametrics on finite sets and certain kinds of rooted trees formulated in 
Proposition \ref{prop:equivultramtree}. The rooted trees used in this proof are the 
\emph{Eggers-Wall trees} of a plane curve singularity relative to smooth reference 
branches. The precise definition of Eggers-Wall trees is not given, because the proofs 
of the subsequent generalizations of P\l oski's theorem will be of a completely different spirit. 
\medskip

Instead of working both with multiplicities and intersection numbers as in P\l oski's 
original statement, we will work only with the latest ones. 

Let $S$ be a smooth germ of surface and $L \hookrightarrow S$ be a \emph{smooth branch}. Define 
the following function on the set of branches on $S$ which are different from $L$:

\begin{equation}  \label{eq:ultramrelbranch}
    \begin{array}{lccc}
                   \boxed{u_L} : &   \left(\cB(S) \setminus \{L \} \right)^2 & \to & \R_+ \\
                       &   (A,B)   & \to &     
                             \left\{ \begin{array}{cc}
                                        \dfrac{(L \cdot A) \cdot (L \cdot B)}{A \cdot B} & \mbox{ if } A \neq B, \\
                                           0 & \mbox{ otherwise}.
                                      \end{array} \right.
              \end{array}
\end{equation}

In the remaining part of this section  we will sketch a proof of:

\begin{theorem}   \label{thm:ulultram}
    The function $u_L$ is an ultrametric. 
\end{theorem}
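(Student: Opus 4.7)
The plan is to reduce Theorem~\ref{thm:ulultram}, via Proposition~\ref{prop:equivultramtree}, to constructing a hierarchical tree whose leaves are the branches in play, together with a depth function $\lambda$ satisfying $\lambda(A \wedge B) = u_L(A, B)$. Since being an ultrametric is a property tested on finite subsets, I would fix an arbitrary finite family $\cF$ of branches distinct from $L$ and work inside it. Choosing local coordinates $(x, y)$ on $S$ with $L = Z(x)$ and performing a generic linear change in $y$, we may further assume every $A \in \cF$ is transversal to $L$. Then each such $A$ admits a Newton-Puiseux factorization into $m(A)$ fractional-power-series roots $\alpha_1(x), \ldots, \alpha_{m(A)}(x)$, and Proposition~\ref{prop:intpar} yields both $L \cdot A = m(A)$ and $A \cdot B = \sum_{i,j} v_x(\alpha_i - \beta_j)$ for distinct $A, B \in \cF$, where $v_x$ is the natural valuation on Puiseux series in $x$.

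Next I would build the \emph{Eggers-Wall tree} $\Theta_L(\cF)$: a rooted tree with root corresponding to $L$ and leaves indexed by the elements of $\cF$, whose interior vertices record the common initial segments of the Newton-Puiseux expansions of members of $\cF$. One decorates its vertex set with a characteristic-contact function $e$ taking the value $0$ at $L$, the value $\infty$ at each leaf, and strictly increasing along every root-to-leaf path; setting $\lambda := 1/e$ then furnishes a depth function in the sense of Definition~\ref{def:depthfunct}.

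The identity I expect to be the main obstacle is
\[
   \frac{A \cdot B}{m(A)\, m(B)} \;=\; e(A \wedge_L B)
\]
for every pair of distinct branches $A, B \in \cF$, where $A \wedge_L B$ denotes the meeting vertex of the two root-to-leaf paths in $\Theta_L(\cF)$. One expands $A \cdot B = \sum_{i,j} v_x(\alpha_i - \beta_j)$ and regroups pairs $(\alpha_i, \beta_j)$ according to where their Puiseux expansions first diverge. The non-archimedean inequality $v_x(\alpha - \beta) \geq \min(v_x(\alpha - \gamma), v_x(\beta - \gamma))$ on Puiseux series pins each contribution down to the contact exponent at the meeting vertex, while a Galois-theoretic count of the conjugate roots of $A$ and $B$ shows that the double sum equals precisely $m(A)\, m(B)\, e(A \wedge_L B)$.

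Granted this identity, one has $u_L(A, B) = \lambda(A \wedge_L B)$, and Proposition~\ref{prop:equivultramtree} immediately upgrades $u_L$ to an ultrametric on $\cF$; since $\cF$ was arbitrary, $u_L$ is an ultrametric on the entire set of branches distinct from $L$.
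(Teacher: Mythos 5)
Your overall strategy is the paper's own: build the Eggers--Wall tree of $\cF$ rooted at $L$, decorate it with a depth function, and conclude via Proposition~\ref{prop:equivultramtree}. Two steps, however, do not work as written. First, the reduction to the transversal case is not available: the branch $L$ is \emph{fixed} in Theorem~\ref{thm:ulultram}, and whether $A$ is tangent to $L$ is intrinsic to the pair $(A,L)$, so no change of coordinates preserving $L=Z(x)$ can remove a tangency. A generic choice of the second coordinate only guarantees that the Weierstrass polynomial of $A$ in $y$ has degree $L\cdot A$; when $A$ is tangent to $L$ this degree exceeds $m(A)$ and the Newton--Puiseux roots have order $<1$. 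The correct bookkeeping is to carry $L\cdot A$ (the number of roots) throughout and never invoke $m(A)$; as written, your argument proves only the transversal case (Theorem A of the introduction), not Theorem~\ref{thm:ulultram} itself.

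Second, and more seriously, the identity $\frac{A\cdot B}{m(A)\,m(B)}=e(A\wedge_L B)$ is false if $e$ is the contact (coincidence) order, which is the natural reading of a function ``recording the common initial segments.'' Take $A=Z(y^2-x^3)$, with roots $\pm x^{3/2}$, and $B=Z(y^2-2x^2y+x^4-x^3)$, with roots $\pm x^{3/2}+x^2$: the maximal coincidence order is $2$, but only two of the four pairs $(\alpha_i,\beta_j)$ attain it, the other two separating already at $3/2$; the double sum gives $A\cdot B=2+\tfrac32+\tfrac32+2=7$, hence $\frac{A\cdot B}{m(A)m(B)}=\tfrac74\neq 2$. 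The quantity that works is not $e_L$ but the index-weighted integral $\int_L^{P}de_L/i_L$ of the paper's Theorem~\ref{thm:intmin} (Noether's formula), and your ``Galois-theoretic count'' is precisely where the weighting by $1/i_L$ has to be produced: the number of conjugate pairs separating at each vertex is governed by the indices, and the regrouped double sum equals $(L\cdot A)(L\cdot B)\int_L^{A\wedge_L B}de_L/i_L$, not $m(A)\,m(B)$ times the maximal contact order. With $e$ replaced by this integral and $m(\cdot)$ by $L\cdot(\cdot)$, your argument becomes the paper's proof of Theorem~\ref{thm:idtrees}, together with the classical proof of Theorem~\ref{thm:intmin} that the paper cites rather than reproduces.
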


We leave as an exercise to show using Proposition \ref{prop:ineqint}  
that Theorem \ref{thm:ulultram} implies the reformulation given in Theorem 
\ref{thm:ploskibis} of P\l oski's Theorem \ref{thm:ploski}. 

Our proof of Theorem \ref{thm:ulultram} will pass through the notion of 
\emph{Eggers-Wall tree} \index{tree!Eggers-Wall} associated to a plane curve singularity relative to 
a smooth branch of reference $L$ (see the proof of Theorem \ref{thm:idtrees} below). 
Let us illustrate it by an example.

  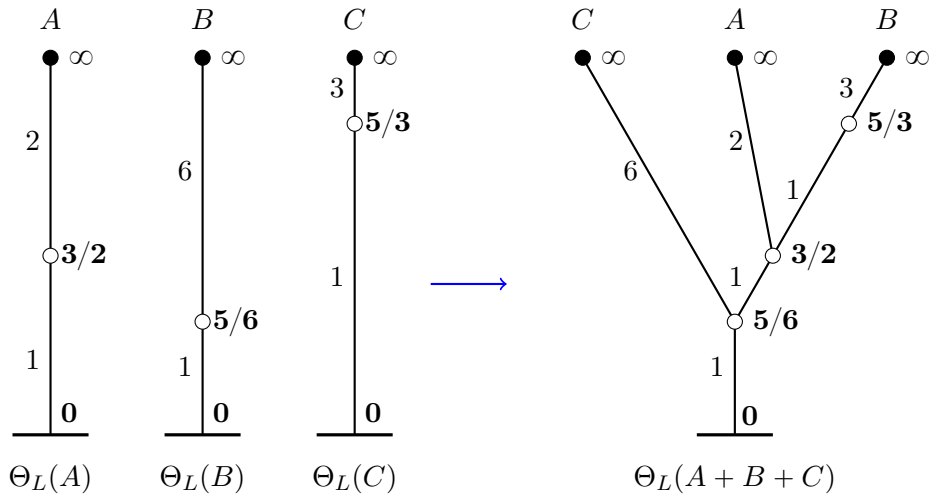
\begin{figure}[h!]
    \begin{center}
\begin{tikzpicture}[scale=0.5]

    \draw [-, color=black, thick](-14,0) -- (-14,10) ;
     \node[draw,circle, inner sep=2pt,color=black, fill=black] at (-14,10){};
     \node[draw,circle, inner sep=2pt,color=black, fill=white] at (-14,  4.75){};
     \node [right] at (-14, 4.75) {$\mathbf{3/2}$};
     \node [left] at (-14, 2) {$1$};
     \node [left] at (-14, 7.8) {$2$};
      \node [above] at (-14, 10.5) {$A$};
       \node [right] at (-13.8, 10) {$\mathbf{\infty}$};
       \node [right] at (-14, 0.6) {$\mathbf{0}$};  
       \draw [-, color=black, very thick](-15, 0) -- (-13,0);
       \node [below] at (-14, -0.5) {$\Theta_L(A)$};  

    \draw [-, color=black, thick](-10,0) -- (-10,10) ;
     \node[draw,circle, inner sep=2pt,color=black, fill=black] at (-10,10){};
     \node[draw,circle, inner sep=2pt,color=black, fill=white] at (-10,  3){};
     \node [right] at (-10, 3) {$\mathbf{5/6}$};
     \node [left] at (-10, 1.8) {$1$};
     \node [left] at (-10, 7) {$6$};
      \node [above] at (-10, 10.5) {$B$};
      \node [right] at (-9.8, 10) {$\mathbf{\infty}$};
      \node [right] at (-10, 0.6) {$\mathbf{0}$};  
       \draw [-, color=black, very thick](-11, 0) -- (-9,0);
       \node [below] at (-10, -0.5) {$\Theta_L(B)$};  

    \draw [-, color=black, thick](-6,0) -- (-6,10) ;
      \node[draw,circle, inner sep=2pt,color=black, fill=black] at (-6,10){};
      \node[draw,circle, inner sep=2pt,color=black, fill=white] at (-6,  8.25){};
      \node [right] at (-6, 8.25) {$\mathbf{5/3}$};
      \node [left] at (-6, 4.2) {$1$};
     \node [left] at (-6, 9.2) {$3$};
       \node [above] at (-6, 10.5) {$C$};
       \node [right] at (-5.8, 10) {$\mathbf{\infty}$};
       \node [right] at (-6, 0.6) {$\mathbf{0}$};  
        \draw [-, color=black, very thick](-7, 0) -- (-5,0);
        \node [below] at (-6, -0.5) {$\Theta_L(C)$};


     \draw [->, color=blue, thick] (-4, 4) -- (-2, 4) ; 

    \node[draw,circle, inner sep=2pt,color=black, fill=black] at (0,10){};
     \node[draw,circle, inner sep=2pt,color=black, fill=black] at (4,10){};
     \node[draw,circle, inner sep=2pt,color=black, fill=black] at (8, 10){};

    \draw [-, color=black, thick](4,0) -- (4,3) ;
     \draw [-, color=black, thick](4, 3) --  (0, 10);
     \draw [-, color=black, thick](5, 4.75) -- (4,10);
     \draw [-, color=black, thick](4, 3) -- (8,10);
     
      \draw [-, color=black, very thick](3, 0) -- (5,0);
      
      \node[draw,circle, inner sep=2pt,color=black, fill=white] at (4,3){};
      \node[draw,circle, inner sep=2pt,color=black, fill=white] at (5,  4.75){};
      \node[draw,circle, inner sep=2pt,color=black, fill=white] at (7, 8.25){};
     
     \node [above] at (0,10.5) {$C$};
      \node [above] at (4, 10.5) {$A$};
       \node [above] at (8, 10.5) {$B$};
       \node [right] at (0.2, 10) {$\mathbf{\infty}$};
       \node [right] at (4.2, 10) {$\mathbf{\infty}$};
       \node [right] at (8.2, 10) {$\mathbf{\infty}$};
           
        \node [right] at (7.2, 8.25) {$\mathbf{5/3}$};
         \node [right] at (5.2, 4.75) {$\mathbf{3/2}$};
          \node [right] at (4.2, 3) {$\mathbf{5/6}$};
           \node [above] at (4.4, 0) {$\mathbf{0}$};

        \node [left] at (4, 1.8) {$1$};
         \node [left] at (4.5, 4.2) {$1$};
          \node [left] at (6, 6.5) {$1$};
       \node [right] at (0.8, 7) {$6$};   
        \node [left] at (4.5, 7.8) {$2$};
         \node [left] at (7.4, 9.2) {$3$};
         
         \node [below] at (4, -0.5) {$\Theta_L(A+B+C)$};

    \end{tikzpicture}
\end{center}
\caption{The Eggers-Wall tree of the plane curve singularity of Example \ref{ex:basicbis}} 
 \label{fig:EWtree}
   \end{figure}

   \begin{example}  \label{ex:basicbis}
    Consider again the branches $A= Z(y^2 - x^3), B= Z(y^3 - x^5), C= Z(y^6 - x^5)$ 
    on $S = (\C^2, 0)$ of Example \ref{ex:basic}. 
    Assume that the branch $L$ is the germ at $0$ of the $y$-axis $Z(x)$. 
    The defining equations of the three branches $A, B, C$ may be considered as polynomial 
    equations in the variable $y$. As such, they admit the following roots which are fractional 
    powers of $x$:
      $$\begin{array}{l}
            A : \:    x^{3/2},  \\
            B : \:   x^{5/3}, \\
            C : \:   x^{5/6}. 
      \end{array}$$   
     Associate to the root $x^{3/2}$ a  compact segment $\Theta_L(A)$ identified with 
     the interval $[0, \infty]$ using an \emph{exponent function} $e_L : \Theta_L(A) \to [0, \infty]$ 
     and mark on it the point $e_L^{-1}(3/2)$ with exponent $3/2$. Define also an  
     \emph{index function} $i_L : \Theta_L(A) \to \N^*$, constantly equal to $1$ on the 
     interval $[e_L^{-1}(0), e_L^{-1}(3/2)]$ and to $2$ on the interval 
     $(e_L^{-1}(3/2), e_L^{-1}(\infty)]$ (see the left-most segment of Figure \ref{fig:EWtree}). 
     Here the number $2$ is to be thought as the minimal positive 
     denominator of the exponent $3/2$ of the monomial $x^{3/2}$. 
     The segment $\Theta_L(A)$ endowed with the two functions $e_L$ and $i_L$ is 
     the \emph{Eggers-Wall tree} of the branch $A$ relative to the branch $L$. 
     It is considered as a rooted tree with root $e_L^{-1}(0)$, labeled with the 
     branch $L$. Its leaf $e_L^{-1}(\infty)$ is labeled with the branch $A$. Consider analogously  
     the Eggers-Wall trees $\Theta_L(B)$ and $\Theta_L(C)$, endowed with pairs of exponent and 
     index functions and labeled roots and leaves (see the left part of Figure \ref{fig:EWtree}). 
     
     Look now at the plane curve singularity $A + B + C$. Its Eggers-Wall tree 
     $\Theta_L(A + B + C)$ relative to the branch $L$ is obtained from the individual 
     trees $\Theta_L(A), \Theta_L(B), \Theta_L(C)$ by a gluing process, which identifies 
     two by two initial segments of those trees. 
     
     Consider for instance the segments 
     $\Theta_L(A), \Theta_L(B)$. Look at the order of the difference 
     $x^{3/2} - x^{5/3}$ of the roots which generated them, seen as a series with 
     fractional exponents. This order is the fraction $3/2$, 
     because $3/2 < 5/3$. 
     Identify then the points with the same exponent $\leq 3/2$ of the segments 
     $\Theta_L(A), \Theta_L(B)$. One gets a rooted tree $\Theta_L(A + B)$ 
     with root labeled by $L$ and 
     with two leaves, labeled by the branches $A, B$. The exponent and index functions 
     of the trees $\Theta_L(A), \Theta_L(B)$ descend to functions with the same name 
     $e_L, i_L$  defined on $\Theta_L(A + B)$. Endowed with those functions, 
     $\Theta_L(A + B)$ is the \emph{Eggers-Wall tree} of the curve singularity $A + B$. 
     
     If one considers now the curve singularity $A + B + C$, then one glues analogously 
     the three pairs of trees obtained from $\Theta_L(A), \Theta_L(B), \Theta_L(C)$. 
     The resulting Eggers-Wall tree $\Theta_L(A + B + C)$ is drawn on the right side of Figure 
     \ref{fig:EWtree}. It is also endowed with two functions $e_L, i_L$, obtained by 
     gluing the exponent and index functions of the trees $\Theta_L(A), \Theta_L(B), \Theta_L(C)$. 
     Its marked points are its ends, its bifurcation points and the images 
     of the discontinuity points of the index function of the Eggers-Wall tree of each branch. 
     Near each marked point is written the corresponding value of the exponent function. 
     The index function is constant on each segment $(a,b]$ joining two marked points 
     $a$ and $b$, where $a \prec_L b$. Here $\preceq_L$ denotes the partial order on the tree 
      $\Theta_L(A + B + C)$ determined by the root $L$ (see Definition \ref{def:rootree}).     
\end{example}

One may associate analogously an {\bf Eggers-Wall tree} \index{tree!Eggers-Wall} 
$\boxed{\Theta_L(D)}$ to any plane 
curve singularity $D$, relative to a \emph{smooth} reference branch $L$. It is a rooted tree 
endowed with an {\bf exponent function} \index{function!exponent} 
$\boxed{e_L} : \Theta_L(D) \to [0, \infty]$ and 
 an {\bf index function} \index{function!index} 
 $\boxed{i_L} : \Theta_L(D) \to \N^*$. The tree and both functions 
 are constructed using Newton-Puiseux series expansions of the roots of a Weierstrass 
 polynomial $f \in \C[[x]][y]$ defining $D$ in a coordinate system $(x,y)$ 
 such that $L = Z(x)$. The triple $(\Theta_L(D), e_L, i_L)$ is independent of the choices 
  involved in the previous definition (see \cite[Proposition 103]{GBGPPP 18}). One may 
  find the precise definition and examples of Eggers-Wall trees in Section 4.3 of the previous 
  reference and in \cite[Sect. 3]{GBGPPP 19}. Historical remarks about this notion 
  may be found in \cite[Rem. 3.18]{GBGPPP 19} and \cite[Sect. 6.2]{GBGPPP 20}. 
  The name, introduced in author's thesis \cite{PP 01}, 
  makes reference to Eggers' 1982 paper \cite{E 82} and to Wall's 2003 paper \cite{W 03}.

   What allows us to prove Theorem \ref{thm:ulultram} using Eggers-Wall trees is that 
the values $u_L(A,B)$ of the function $u_L$ defined by relation (\ref{eq:ultramrelbranch}) 
are determined 
in the following way from the Eggers-Wall tree $\Theta_L(D)$, for each pair of distinct 
branches $(A, B)$ of $D$ (recall from the paragraph preceding Proposition \ref{prop:convultram} 
that $A \wedge_L B$ denotes the infimum of $A$ and $B$ 
relative to the partial order $\preceq_L$ induced by the root $L$ of $\Theta_L(D)$):

\begin{theorem}   \label{thm:intmin}
     For each pair $(A, B)$ of distinct branches of $D$ and every smooth 
     reference branch $L$ different from the branches of $D$, one has:
     $$\dfrac{1}{u_L(A, B)} = \int_L^{A \wedge_L B} \dfrac{de_L}{i_L}. $$
\end{theorem}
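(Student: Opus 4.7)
Choose local coordinates $(x,y)$ on $S$ with $L = Z(x)$. By Weierstrass preparation I may assume $A = Z(f_A)$, $B = Z(f_B)$ with $f_A, f_B \in \C\{x\}[y]$ Weierstrass in $y$ of degrees $n_A$ and $n_B$; Definition \ref{def:intnumb} then gives $L \cdot A = n_A$ and $L \cdot B = n_B$. The Newton--Puiseux theorem factors $f_A = \prod_i (y - \tilde\alpha_i)$ and $f_B = \prod_j (y - \tilde\beta_j)$ over the ring of convergent Puiseux series, with the roots forming single Galois orbits under $\Z/n_A$ and $\Z/n_B$ respectively. Fix $\tilde\alpha := \tilde\alpha_1$ and let $\alpha(t) \in \C\{t\}$ be the power series obtained by substituting $x^{1/n_A} \mapsto t$ in $\tilde\alpha$; then $\nu_A(t) := (t^{n_A}, \alpha(t))$ is a normal parametrization of $A$. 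Proposition \ref{prop:intpar} yields
$$A \cdot B \;=\; \mathrm{ord}_t \prod_{j=1}^{n_B}\bigl(\alpha(t) - \tilde\beta_j(t^{n_A})\bigr) \;=\; n_A \sum_{j=1}^{n_B} \mathrm{ord}_x(\tilde\alpha - \tilde\beta_j),$$
the second equality because substituting $x = t^{n_A}$ multiplies $x$-orders by $n_A$. Dividing by $(L\cdot A)(L \cdot B) = n_A n_B$ reduces the theorem to
$$\int_L^{A \wedge_L B} \frac{de_L}{i_L} \;=\; \frac{1}{n_B}\sum_{j=1}^{n_B} \mathrm{ord}_x(\tilde\alpha - \tilde\beta_j).$$

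\textbf{Galois interpretation of $i_L$.} The key combinatorial input is to read the index function as a Galois orbit count: for a generic $P \in \Theta_L(B)$ with $e_L(P) = e$, the value $i_L(P)$ equals the least common denominator of the exponents $\leq e$ occurring in $\tilde\beta_1$, which is precisely the size of the $\Z/n_B$-Galois orbit of the truncation $(\tilde\beta_1)_{\leq e}$. Consequently the number of roots $\tilde\beta_j$ with $(\tilde\beta_j)_{\leq e} = (\tilde\beta_1)_{\leq e}$ equals $n_B / i_L(P)$; a symmetric statement holds on $\Theta_L(A)$.

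\textbf{Layer-cake identity.} Set $c_j := \mathrm{ord}_x(\tilde\alpha - \tilde\beta_j)$ and $c := \max_j c_j$, which by construction of the Eggers--Wall tree equals $e_L(A \wedge_L B)$; reindex so that $c_1 = c$. On the geodesic $[L, A \wedge_L B]$ the index functions of $\Theta_L(A)$, $\Theta_L(B)$, and $\Theta_L(A+B)$ coincide, because for every $e \in [0, c)$ the truncations $(\tilde\alpha)_{\leq e}$ and $(\tilde\beta_1)_{\leq e}$ agree, so the characteristic exponents $\leq e$ of $A$ and $B$ agree as well. Moreover $c_j > e \iff (\tilde\beta_j)_{\leq e} = (\tilde\alpha)_{\leq e} = (\tilde\beta_1)_{\leq e}$, so by the previous paragraph $|\{j : c_j > e\}| = n_B / i_L(e)$ for almost every $e \in [0, c)$ and equals zero for $e \geq c$. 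Combining this with the identity $\sum_j c_j = \int_0^\infty |\{j: c_j > e\}|\,de$ produces
$$\frac{1}{n_B}\sum_{j=1}^{n_B} c_j \;=\; \int_0^c \frac{de}{i_L(e)} \;=\; \int_L^{A \wedge_L B} \frac{de_L}{i_L},$$
which together with Step~1 proves the theorem.

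\textbf{Expected obstacle.} Step~1 is classical Puiseux algebra. The heart of the proof is Step~2: correctly matching the combinatorial definition of $i_L$ on the Eggers--Wall tree (in terms of characteristic exponents and least common denominators) with the Galois orbit count of Newton--Puiseux truncations. The bookkeeping at individual characteristic exponents requires care, since $i_L$ and the orbit size live on opposite sides of each jump, but these form a finite set and do not affect the integral. Once this identification is coherently in place, Step~3 is a routine rearrangement.
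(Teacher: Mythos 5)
Your proof is correct, and it is essentially the classical argument that the paper itself does not reproduce but delegates to \cite[Thm. 4.1.6]{W 04}: compute $A\cdot B$ via Proposition \ref{prop:intpar} as $n_A\sum_j \mathrm{ord}_x(\tilde\alpha-\tilde\beta_j)$, interpret $n_B/i_L$ as the number of Galois conjugates of $\tilde\beta_1$ sharing a given truncation, and conclude by the layer-cake identity. The only point requiring the care you already flag is the half-open convention for $i_L$ at characteristic exponents (in the paper's convention the index at $P$ is the lcm of the denominators of the exponents \emph{strictly} below $e_L(P)$), which affects only finitely many points and hence not the integral.
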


\begin{example}  \label{ex:verifint}
   Let us verify the equality stated in Theorem \ref{thm:intmin} on the branches of 
   Example \ref{ex:basicbis}. Looking at the Eggers-Wall tree $\Theta_L(A + B + C)$ on the 
   right side of Figure \ref{fig:EWtree}, we see that:
      \[ \int_L^{A \wedge_L B} \dfrac{de_L}{i_L} = \int_0^{3/2} \dfrac{de}{1} = \dfrac{3}{2}. \]
   But $1/ u_L(A, B) = (A \cdot B) / \left( (L \cdot A) (L \cdot B) \right) = 
   (A \cdot B) / \left( m(A) \cdot m(B)\right) = 3/2$, as was computed in Example \ref{ex:basic}. 
    The equality is verified. We have used the fact that both $A$ and $B$ are transversal 
    to $L$, which implies that $L \cdot A = m(A)$ and $L \cdot B = m(B)$. 
\end{example}

In equivalent formulations which use so-called \emph{characteristic exponents}, 
Theorem \ref{thm:intmin} goes back 
to Smith \cite[Section 8]{S 75}, Stolz \cite[Section 9]{S 79} and Max Noether \cite{N 90}. 
A modern proof, based on Proposition \ref{prop:intpar}, may be found in \cite[Thm. 4.1.6]{W 04}.

As a consequence of Theorem \ref{thm:intmin}, we get the following strengthening of 
Theorem \ref{thm:ulultram}:

\begin{theorem}  \label{thm:idtrees}
      Let $D$ be a plane curve singularity. Denote by $\cF(D)$ the set of branches of $D$. 
      Let $L$ be a reference smooth branch which does not belong to $\cF(D)$. 
      Then the function $u_L$ is an ultrametric in restriction to $\cF(D)$ and its associated 
      rooted tree is isomorphic as a rooted tree with labeled leaves to the 
      Eggers-Wall tree $\Theta_L(D)$. 
\end{theorem}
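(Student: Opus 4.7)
The plan is to deduce Theorem \ref{thm:idtrees} directly from Theorem \ref{thm:intmin} (taken as given, as it is classical) combined with the tree-theoretic machinery of Section \ref{sec:ultramtrees}. The key observation is that the formula $1/u_L(A,B) = \int_L^{A\wedge_L B} de_L/i_L$ already expresses $u_L(A,B)$ as a function of the infimum $A \wedge_L B$ in $\Theta_L(D)$, which is exactly the shape required by Proposition \ref{prop:convultram}.

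Concretely, I would first define a function $\lambda : V(\Theta_L(D)) \to [0,\infty]$ by
$$\lambda(v) := \left( \int_L^{v} \frac{de_L}{i_L} \right)^{-1},$$
with the conventions $\lambda(L) := \infty$ (the integral vanishes at the root) and $\lambda(A) := 0$ at every leaf $A \in \cF(D)$ (the integral diverges because $e_L \to \infty$ there while $i_L$ stays bounded). I would then verify that $\lambda$ satisfies the three conditions of Definition \ref{def:depthfunct}: the boundary values have just been checked, and strict monotonicity along $\preceq_L$ is immediate because moving strictly upward in the tree adds a strictly positive quantity to the integral, hence strictly decreases its reciprocal. With $\lambda$ recognized as a depth function, Theorem \ref{thm:intmin} rewrites as $u_L(A,B) = \lambda(A \wedge_L B)$, so Proposition \ref{prop:convultram} yields at once that $u_L$ is an ultrametric on $\cF(D)$, and Proposition \ref{prop:equivultramtree} identifies the rooted tree of $u_L$ with the tree underlying $(\Theta_L(D), \lambda)$, labeled at its leaves by $\cF(D)$ just as $\Theta_L(D)$ is.

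The main subtlety, and likely the place requiring some care, is reconciling $\Theta_L(D)$ with the strict notion of hierarchical tree from Definition \ref{def:hiertree}: the Eggers-Wall tree carries extra marked points of valency $2$ (namely the points where the index function $i_L$ jumps), which are not present in a hierarchical tree in the sense of Section \ref{sec:ultramtrees}. To handle this, I would either contract those bivalent marked points to obtain the underlying hierarchical tree, or else note that Propositions \ref{prop:convultram} and \ref{prop:equivultramtree} extend verbatim to trees carrying bivalent vertices provided $\lambda$ remains strictly monotone (which it is, by the computation above). Either way, the isomorphism is as rooted trees with labeled leaves, which is exactly what the statement asserts. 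Beyond this bookkeeping, the real content is entirely packaged in Theorem \ref{thm:intmin} itself, whose classical proof via Newton--Puiseux parametrizations and Proposition \ref{prop:intpar} we invoke without reproducing.
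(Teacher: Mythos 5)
Your proposal is correct and is essentially identical to the paper's own proof: both define the depth function $P \mapsto \bigl( \int_L^{P} de_L/i_L \bigr)^{-1}$ on $\Theta_L(D)$ rooted at $L$ and conclude via Theorem \ref{thm:intmin} together with Propositions \ref{prop:convultram} and \ref{prop:equivultramtree}. The bivalent-vertex issue you flag is resolved in the paper exactly by your first option, namely taking the vertex set of $\Theta_L(D)$ to consist only of its ends and ramification points.
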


\begin{proof}
     Consider $\Theta_L(D)$ as a topological tree with vertex set equal to its set of 
     ends and of ramification points. Root it at $L$. Then it becomes a hierarchical tree 
     in the sense of Definition \ref{def:hiertree}. The function 
         $$ P \to \left( \int_L^{P} \dfrac{de_L}{i_L}\right)^{-1}$$
     is a depth function on it, in the sense of Definition \ref{def:depthfunct}. 
     Using Theorem \ref{thm:intmin} and Proposition \ref{prop:equivultramtree}, we get 
     Theorem \ref{thm:idtrees}.      
\end{proof}

For more details about the proof of P\l oski's theorem presented in this section, see 
\cite[Sect. 4.3]{GBGPPP 18}.

\section{An ultrametric characterization of arborescent singularities}
\label{sec:ultramchar}

In this section we state a generalization of Theorem \ref{thm:ulultram} 
for all \emph{arborescent singularities} and the fact that it characterizes 
this class of normal surface singularities. We start by recalling the needed 
notions of \emph{embedded resolution} and associated \emph{dual graph} of a finite 
set of branches contained in a normal surface singularity. 
\medskip

From now on, $S$ denotes an arbitrary {\bf normal surface singularity}, that is, 
\index{singularity!normal surface} 
a germ of normal complex analytic surface. Let us recall 
the notion of \emph{resolution} of such a singularity:

\begin{definition}  \label{def:resol}  \index{resolution!of a surface singularity}
    Let $(S, s)$ be a normal surface singularity. A {\bf resolution} of it is a proper 
    bimeromorphic morphism $\pi : S^{\pi} \to S$ such that $S^{\pi}$ is smooth. 
    Its {\bf exceptional divisor} \index{divisor!exceptional} 
    $\boxed{E^{\pi}}$ is the reduced preimage $\pi^{-1}(s)$. 
    The resolution is {\bf good} if its exceptional divisor has normal crossings and all 
    its irreducible components are smooth. The {\bf dual graph} \index{dual graph!of a resolution} 
    $\boxed{\Gamma(\pi)}$ of the resolution 
    $\pi$ is the finite graph whose set of vertices $\boxed{\cP(\pi)}$ is the set of irreducible components 
    of $E^{\pi}$, two vertices being joined by an edge if and only if the corresponding 
    components intersect. 
\end{definition}

Every normal surface singularity admits resolutions and even good ones. 
This result, for which partial proofs 
appeared already at the end of the XIXth century, was proved first in the analytical context 
by Hirzebruch in his 1953 paper \cite{H 53}. His proof was inspired by previous works 
of Jung \cite{J 08} and Walker \cite{W 35}, done in an algebraic context.

Assume now that $\cF$ is a finite set of branches on $S$. It may be also seen as 
a reduced divisor on $S$, by thinking about their sum. The notion of \emph{embedded 
resolution} of $\cF$ is an analog of that of good resolution of $S$: 

\begin{definition}   \label{def:embres}    
    Let $(S,s)$ be a normal surface singularity and let $\pi : S^{\pi} \to S$ be a resolution of $S$. 
    If $A$ is a branch on $S$, then its {\bf strict transform by $\pi$} 
    \index{strict transform} is the closure inside 
    $S^{\pi}$ of the preimage $\pi^{-1}(A \setminus s)$.  
    Let $\cF$ be a finite set of branches on $S$. Its {\bf strict transform by $\pi$} 
    is the set or, depending on the context, the divisor 
    formed by the strict transforms of the branches of $\cF$. 
    The {\bf preimage $\boxed{\pi^{-1} \cF}$ 
    of $\cF$ by $\pi$} is the sum of its strict transform and of 
    the exceptional divisor of $\pi$.  The morphism $\pi$ is an 
    {\bf embedded resolution of $\cF$} \index{resolution!embedded} if it is a good resolution of $S$ and 
    the preimage of $\cF$ by $\pi$ is a normal crossings divisor. The {\bf dual graph} 
    $\boxed{\Gamma(\pi^{-1} \cF)}$
    of the preimage $\pi^{-1} \cF$ is defined similarly to the dual graph $\Gamma(\pi)$ of $\pi$, 
    taking into account all the irreducible components of $\pi^{-1} \cF$. 
\end{definition}

In the previous definition, the preimage $\pi^{-1} \cF$ of $\cF$ by $\pi$ is seen as a reduced 
divisor. We will see in Definition \ref{def:Mumftot} below that there is also a canonical 
way, due to Mumford, to define canonically a not necessarily reduced rational divisor supported 
by $\pi^{-1} \cF$, called the \emph{total transform} of $\cF$ by $\pi$, and denoted 
by $\pi^* \cF$.

The notion of dual graph of a resolution allows to define the following class of \emph{arborescent 
singularities}, whose name was introduced in the paper \cite{GBGPPP 18}, even if the class   
had appear before, for instance in Camacho's work \cite{C 88}: 

\begin{definition}  \label{def:arbsing}
     Let $S$ be a normal surface singularity. It is called {\bf arborescent} if the dual 
     graphs of its good resolutions are trees.  \index{singularity!arborescent}
\end{definition}

Remark that in the previous definition we ask nothing about the genera 
of the irreducible components of the exceptional divisors. 

By using the fact that any two resolutions are related by a sequence of blow ups and blow downs 
of their total spaces (see \cite[Thm. V.5.5]{H 77}), 
one sees that the dual graphs of all good resolutions are trees 
if and only if this is true for one of them. 

Consider now an \emph{arbitrary} branch $L$ on the normal surface singularity 
$S$. We may define the function $u_L$ by the same formula (\ref{eq:ultramrelbranch}) 
as in the case when both $S$ and $L$ were assumed smooth. Intersection numbers of 
branches still have a meaning, as was shown by Mumford. We will explain this  
in Section \ref{sec:mumfint} below (see Definition \ref{def:Mumfint}). 

The following generalization of Theorem \ref{thm:ulultram}  
both gives a characterization of arborescent singularities and 
extends Theorem \ref{thm:idtrees} to \emph{all} arborescent 
singularities $S$ and \emph{all} -- not necessarily smooth -- reference branches $L$ on them 
(recall that $\cB(S)$ denotes the set of branches on $S$):

\begin{theorem}  \label{thm:chararb}
      Let $S$ be a normal surface singularity 
      and $L \in \cB(S)$. Then:
         \begin{enumerate}
             \item \label{item:chararb} 
                   $u_L$ is ultrametric on $\cB(S) \setminus \{L\}$ 
                  if and only if $S$ is arborescent. 
             \item \label{item:chartreearb} 
                 In this case, for any finite set $\cF$ of branches 
                on $S$ not containing $L$, the rooted tree of the restriction of $u_L$ to 
                $\cF $ is isomorphic to the convex hull of $\cF \cup \{L \}$ in the dual graph 
                of the preimage of $\cF\cup \{L \}$ by any embedded resolution of 
                $\cF\cup \{L \}$, rooted at $L$. 
          \end{enumerate}
\end{theorem}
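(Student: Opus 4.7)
The plan is to fix an embedded resolution $\pi : S^{\pi} \to S$ of $\cF \cup \{L\}$, with dual graph $\Gamma := \Gamma(\pi^{-1}(\cF \cup \{L\}))$, and to translate every intersection number into linear algebra on $\Gamma$ via Mumford's theory (which is recalled in Section \ref{sec:mumfint}). Writing $\pi^*A = \tilde A + L_A^*$ for the Mumford total transform, where $L_A^*$ is the unique $\Q$-divisor supported on the exceptional components satisfying $\pi^* A \cdot E_i = 0$ for all exceptional $E_i$, one has for distinct branches $A, B$ the formula $A \cdot B = \pi^* A \cdot \tilde B$. Since the intersection form restricted to the exceptional components is negative definite, the coefficients of $L_A^*$ can be read off the inverse of the intersection matrix of $\Gamma$.

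For the direct implication in \eqref{item:chararb} and the tree description in \eqref{item:chartreearb}, I would assume $S$ arborescent, so $\Gamma$ is a tree. The crucial observation is that the entries of the inverse of the (negative of the) intersection matrix of a negative definite tree-shaped graph admit an explicit product-over-path decomposition: for two vertices $p, q$ of $\Gamma$, the corresponding matrix entry factors as a product of quantities attached to the unique path between them. Substituting this into the Mumford formula, one obtains that $(L \cdot A)(L \cdot B)/(A \cdot B)$ depends only on the vertex $v_{A,B,L}$ where the paths from the strict transform $\tilde L$ to $\tilde A$ and to $\tilde B$ first diverge. Setting $\lambda(v) := (L \cdot A)(L \cdot B)/(A \cdot B)$ at that branching vertex yields a strictly increasing function on the convex hull of the strict transforms rooted at $\tilde L$, i.e.\ a depth function in the sense of Definition \ref{def:depthfunct}. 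Proposition \ref{prop:equivultramtree} then packages this into both the ultrametric inequality and the identification of the rooted tree of $u_L|_\cF$ with the convex hull rooted at $\tilde L$.

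For the converse direction of \eqref{item:chararb}, I would assume $S$ is not arborescent and produce a violation of the ultrametric inequality. Take a good resolution of $S$ whose dual graph carries a cycle $\gamma$, and choose $L$ together with three branches $A, B, C$ whose strict transforms are attached to three distinct components $E_A, E_B, E_C$ lying on $\gamma$ (this can always be arranged after possibly blowing up further, by constructing branches transverse to carefully chosen exceptional components). Using the Mumford formula again, the three quantities $u_L(A,B), u_L(B,C), u_L(A,C)$ are then computed from the $3\times 3$ minor of the inverse intersection matrix corresponding to $\{E_A, E_B, E_C\}$. On a cycle, this minor does not admit the product-over-path decomposition used in the arborescent case, and a direct computation shows that one can arrange the self-intersections and positions of $E_A, E_B, E_C$ so that the three quantities are all distinct, which falsifies the ultrametric inequality for $\{A, B, C\}$.

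The main obstacle is the key structural lemma underlying Step two: making precise and proving the product-over-path formula for the inverse intersection matrix of a negative definite tree-weighted graph, and then checking that the resulting function on the convex hull is genuinely strictly monotone with respect to $\preceq_{\tilde L}$ (so that it is a depth function and not merely weakly monotone). This is essentially the content of the "angular distance" formalism of Section \ref{sec:refineq} and of the graph-theoretic theorem of Section \ref{sec:thmgraph}; once it is established, both directions of Theorem \ref{thm:chararb} follow by matching the combinatorics of rooted trees associated to ultrametrics with the combinatorics of convex hulls inside $\Gamma$.
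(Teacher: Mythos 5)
Your treatment of the implication ``arborescent $\Rightarrow$ ultrametric'', together with the identification of the rooted tree, is sound in outline but follows a different route from these notes. The paper deduces both from Theorem \ref{thm:ultrambv}: when $S$ is arborescent the dual graph $\Gamma$ is a tree, so $\cB\cV(\Gamma)=\Gamma$ contains no brick-vertices, the valency hypothesis of that theorem is vacuously satisfied, and its conclusion is exactly the remaining implication of point (\ref{item:chararb}) together with point (\ref{item:chartreearb}); the machinery behind Theorem \ref{thm:ultrambv} is the angular distance, the four points condition and the graph-theoretic Theorem \ref{thm:BVuse}. What you propose instead is essentially the direct argument of \cite{GBGPPP 18}: the ``product-over-path'' factorization of the entries $\langle u,v\rangle=-E^{\vee}_u\cdot E^{\vee}_v$ for a tree, which is precisely the equality case of Theorem \ref{thm:crucineq} (equality holds exactly when the middle vertex separates the other two, which on a tree happens for every vertex of the geodesic joining them). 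Granting it, one gets $u_L(A,B)=\langle l,v\rangle^2/\langle v,v\rangle$ where $v$ is the branching vertex, and Lemma \ref{lem:CS} shows that this quantity strictly \emph{decreases} as $v$ moves away from the root --- not increases, as you wrote; a strictly increasing function would not be a depth function in the sense of Definition \ref{def:depthfunct}, and Proposition \ref{prop:convultram} would not apply. With that orientation corrected, the packaging via Proposition \ref{prop:equivultramtree} does yield both the ultrametric inequality and the tree description (up to the usual suppression of valency-two vertices of the convex hull). This is a legitimate alternative: it is shorter in the arborescent case, but it does not extend to the non-arborescent setting that the brick-vertex formalism is designed for.

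The genuine gap is in the converse of (\ref{item:chararb}). You claim one can ``arrange the self-intersections and positions of $E_A,E_B,E_C$'' so that the three quantities are pairwise distinct, but the self-intersection numbers are imposed by $S$ and the chosen resolution; the only freedom is in further blow-ups and in the choice of branches. Moreover, the mere presence of a cycle carrying the relevant components does not force a violation: as noted after Theorem \ref{thm:ultrambv}, \cite[Ex. 1.44]{GBGPPPR 19} exhibits non-arborescent configurations on which $u_L$ is nevertheless an ultrametric, so the failure cannot be read off the topology of $\Gamma$ alone and must be extracted from the actual values of the brackets. Producing, for every non-arborescent $S$ and every $L$, a triple violating the isosceles condition is the hard content of \cite[Sect. 1.6]{GBGPPPR 19}; the present notes explicitly decline to prove this direction and refer there. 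Your sketch of the converse therefore stops exactly where the difficulty begins.
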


We do not prove in the present notes that if $u_L$ is an ultrametric on $\cB(S) \setminus \{L\}$, 
then $S$ is arborescent. The interested reader may find a proof of this fact in 
\cite[Sect. 1.6]{GBGPPPR 19}. The remaining implication of point (\ref{item:chararb}) and 
point (\ref{item:chartreearb}) of Theorem \ref{thm:chararb} are, taken together, a consequence 
of Theorem \ref{thm:ultrambv} below. For this reason, we do not give a separate proof of them, the 
rest of this paper being dedicated to the statement and a sketch of proof of 
Theorem \ref{thm:ultrambv}. The notion of \emph{brick-vertex tree} of a finite connected graph 
being crucial in this theorem, we dedicate next section to it. 

By combining Theorems \ref{thm:idtrees} and \ref{thm:chararb} one gets 
(see \cite[Thm. 112]{GBGPPP 18}): 

\begin{proposition} \label{prop:isotrees} 
   Whenever $S$ and $L$ are both smooth, 
  the Eggers-Wall tree $\Theta_L(D)$ of a plane curve singularity $D \hookrightarrow S$ 
  not containing $L$ is isomorphic to the convex hull of the strict transform of 
  $\cF(D) \cup \{L \}$ in the dual graph of its preimage by any of its embedded 
  resolutions. 
\end{proposition}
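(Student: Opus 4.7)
The plan is to derive Proposition \ref{prop:isotrees} by combining Theorems \ref{thm:idtrees} and \ref{thm:chararb}, viewing both of them as computing the same abstract object -- the rooted tree canonically associated with the ultrametric $u_L$ on the finite set $\cF(D)$ of branches of $D$ -- but from two different perspectives.

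First I would observe that a smooth surface singularity $S$ is in particular arborescent: any good resolution (e.g.\ the trivial one, or any sequence of blow-ups thereof) has a dual graph that is a tree. Hence both Theorem \ref{thm:idtrees} and Theorem \ref{thm:chararb} apply to the pair $(S,L)$ with $L$ a smooth branch not belonging to $\cF(D)$, and to the finite set $\cF = \cF(D)$ of branches of $D$. Theorem \ref{thm:idtrees} then identifies the rooted tree $T_L^{\mathrm{ultr}}$ of the restriction of $u_L$ to $\cF(D)$, as a rooted tree with labeled leaves, with the Eggers--Wall tree $\Theta_L(D)$ (the root of $T_L^{\mathrm{ultr}}$ corresponding to the root $L$ of $\Theta_L(D)$, the leaves being in correspondence with the branches of $D$). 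Theorem \ref{thm:chararb}\,(\ref{item:chartreearb}) identifies the same rooted tree $T_L^{\mathrm{ultr}}$ with the convex hull of the strict transform of $\cF(D) \cup \{L\}$ in the dual graph $\Gamma(\pi^{-1}(\cF(D)\cup\{L\}))$ associated to any embedded resolution $\pi$ of $\cF(D) \cup \{L\}$, rooted at the vertex corresponding to the strict transform of $L$.

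Composing these two isomorphisms then gives an isomorphism of rooted trees (with leaves labeled by $\cF(D)$ and root labeled by $L$) between $\Theta_L(D)$ and the convex hull of the strict transform of $\cF(D) \cup \{L\}$ in $\Gamma(\pi^{-1}(\cF(D)\cup\{L\}))$, which is exactly the claim of the proposition. The independence of the conclusion from the particular choice of embedded resolution is automatic from this argument, since $\Theta_L(D)$ does not depend on $\pi$, and it also follows directly from the fact that any two embedded resolutions are dominated by a common one, whose dual graph is obtained from each by a sequence of subdivisions of edges that leaves the convex hull of the strict transform unchanged up to the insertion of valency-two vertices.

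The main (and essentially only) conceptual step is to make sure the two rooted-tree isomorphisms produced by Theorems \ref{thm:idtrees} and \ref{thm:chararb} are compatible: they both send the canonical root of $T_L^{\mathrm{ultr}}$ (added as in Definition \ref{def:treehierarchy}) to the vertex associated with $L$, and the leaves of $T_L^{\mathrm{ultr}}$ to the points labeled by the branches of $\cF(D)$. Because both theorems are formulated in terms of rooted trees with labeled leaves, this compatibility is built into their statements. Hence no further work is required beyond composing the two isomorphisms.
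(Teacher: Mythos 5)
Your proposal is correct and is exactly the argument the paper intends: the text introduces Proposition \ref{prop:isotrees} with the phrase ``By combining Theorems \ref{thm:idtrees} and \ref{thm:chararb} one gets,'' and your write-up simply fills in the routine details of that combination (smoothness implies arborescence, both theorems compute the rooted tree of $u_L$ on $\cF(D)$ with the same labeling of root and leaves, compose the isomorphisms). No divergence from the paper's route and no gap.
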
 

A prototype of this fact was proved differently in the author's thesis \cite[Thm. 4.4.1]{PP 01}, 
then generalized in two different ways by Wall in \cite[Thm. 9.4.4]{W 04} 
(see also Wall's comments in \cite[Sect. 9.10]{W 04}) and by Favre and Jonsson 
in \cite[Prop. D.1]{FJ 04}.

\section{The brick-vertex tree of a connected graph}
\label{sec:brickvertex}

In this section we introduce the notion of \emph{brick-vertex tree} of a connected graph, 
which is crucial in order to state Theorem \ref{thm:ultrambv} below, the 
strongest known generalization of P\l oski's theorem.
\medskip

\begin{definition}   \label{def:graph}   \index{graph}
      A {\bf graph} is a compact cell complex of dimension $\leq 1$. If $\Gamma$ 
      is a graph, its set of vertices is denoted $\boxed{V(\Gamma)}$ and its set of edges 
      is denoted $\boxed{E(\Gamma)}$. 
\end{definition}

In the sequel it will be crucial to look at the vertices which disconnect a given graph:

\begin{definition}  \label{def:cut}   \index{cut-vertex}  \index{bridge} \index{separation in graphs}
      \index{graph!cut-vertex of} \index{graph!bridge of} 
      Let $\Gamma$ be a connected graph. A {\bf cut-vertex} of $\Gamma$ is a vertex 
      whose removal disconnects $\Gamma$. A {\bf bridge} of $\Gamma$ is an edge 
      such that the removal of its interior disconnects $\Gamma$. If $a,b,c$ are three 
      not necessarily distinct vertices of $\Gamma$, one says that $b$ 
      {\bf separates $a$ from $c$} if either $b \in \{a,c\}$ or if $a$ and $c$ belong to 
      different connected components of the topological space $\Gamma \: \setminus \: \{b\}$. 
\end{definition}

Note that an end of a bridge is a cut-vertex if and only if it has valency at least $2$ 
in $\Gamma$, that is, if and only if it is not  a  leaf of $\Gamma$. It will be important 
to distinguish the class of graphs which cannot be disconnected by the removal of one vertex, 
as well as the maximal graphs of this class contained in a given connected graph:

\begin{definition}   \label{def:insepblockbrick} \index{graph!inseparable}  
     \index{graph!block of}  \index{graph!brick of} 
      A connected graph is called {\bf inseparable} if it does not contain cut-vertices. 
      A {\bf block} of a connected graph $\Gamma$ is a maximal inseparable subgraph of it. 
      A {\bf brick} of $\Gamma$ is a block which is not a bridge. 
\end{definition}

Note that all the bridges of a connected graph are blocks of it. 

\begin{example}  \label{ex:cutbrickbloc}
    In Figure \ref{fig:Cutbridgeblocks} is represented a connected graph. 
    Its cut-vertices are surrounded in red. Its bridges are represented as black segments. 
    It has three bricks, the edges of each brick being colored in the same way.
\end{example}

\begin{figure}[h!]
\begin{center}
\begin{tikzpicture}[scale=0.5]

 \draw [-, color=black, thick](0,0) -- (-1,1) ;
 \draw [-, color=black, thick](0,0) -- (-1,-1) ;
  \node[draw,circle, inner sep=1.5pt,color=black, fill=black] at (-1,1){};
  \node[draw,circle, inner sep=1.5pt,color=black, fill=black] at (-1,-1){};
 
   \draw [-, color=black, thick](0,0) -- (2,0) ;
   
    \draw [-, color=blue, very thick](4,2) -- (2,0) ;
    \draw [-, color=blue, very thick](4,-2) -- (2,0) ;
    \draw [-, color=blue, very thick](4,-2) -- (4,2) ;
       \draw [-, color=blue, very thick](4,-2) -- (6,0) ;
    \draw [-, color=blue, very thick](6,0) -- (4,2) ;
    
     \draw [-, color=orange, very thick](2,-4) -- (4,-2) ;
    \draw [-, color=orange, very thick](4,-2) -- (6,-4) ;
    \draw [-, color=orange, very thick](2,-4) -- (6,-4) ;
      \node[draw,circle, inner sep=1.5pt,color=black, fill=black] at (4,2){};
  \node[draw,circle, inner sep=2.5pt,color=red, thick, fill=white] at (4,-2){};
  \node[draw,circle, inner sep=1.5pt,color=black, fill=black] at (4,-2){};
   \node[draw,circle, inner sep=2.5pt,color=red, thick, fill=white] at (0,0){};
  \node[draw,circle, inner sep=1.5pt,color=black, fill=black] at (0,0){};
  \node[draw,circle, inner sep=2.5pt,color=red, thick, fill=white] at (2,0){};
  \node[draw,circle, inner sep=1.5pt,color=black, fill=black] at (2,0){};
    \node[draw,circle, inner sep=1.5pt,color=black, fill=black] at (6,-4){};
    \node[draw,circle, inner sep=1.5pt,color=black, fill=black] at (2,-4){};
    \draw [-, color=black, thick](6,0) -- (9,0) ;
    \draw [-, color=black, thick](9,0) -- (9,1) ;
    \draw [-, color=black, thick](9,-1) -- (9,0) ;
    \draw [-, color=black, thick](9,0) -- (12,0) ;
    
    \node[draw,circle, inner sep=2.5pt,color=red, thick, fill=white] at (6,0){};
  \node[draw,circle, inner sep=1.5pt,color=black, fill=black] at (6,0){};
     \node[draw,circle, inner sep=2.5pt,color=red, thick, fill=white] at (9,0){};
  \node[draw,circle, inner sep=1.5pt,color=black, fill=black] at (9,0){};
    
      \draw [-, color=black, thick](8,-2) -- (9,-1) ;
    \draw [-, color=black, thick](9,-1) -- (10,-2) ;
     \node[draw,circle, inner sep=2.5pt,color=red, thick, fill=white] at (9,-1){};
  \node[draw,circle, inner sep=1.5pt,color=black, fill=black] at (9,-1){};
  \node[draw,circle, inner sep=1.5pt,color=black, fill=black] at (8,-2){};
  \node[draw,circle, inner sep=1.5pt,color=black,  fill=black] at (10,-2){};
    \node[draw,circle, inner sep=1.5pt,color=black, fill=black] at (9,1){};
    \draw [-, color=green, very thick](12,0) -- (14,-2) ;
    \draw [-, color=green, very thick](12,0) -- (14,2) ;
    \draw [-, color=green, very thick](14,-2) -- (14,2) ;
       \node[draw,circle, inner sep=2.5pt,color=red, thick, fill=white] at (12,0){};
       \node[draw,circle, inner sep=1.5pt,color=black, fill=black] at (12,0){};
   \node[draw,circle, inner sep=1.5pt,color=black, fill=black] at (14,2){};
  \node[draw,circle, inner sep=1.5pt,color=black, thick, fill=black] at (14,-2){};
 
 \end{tikzpicture}
\end{center}
\caption{A connected graph, its cut-vertices, its bridges and its bricks}
\label{fig:Cutbridgeblocks}
\end{figure}
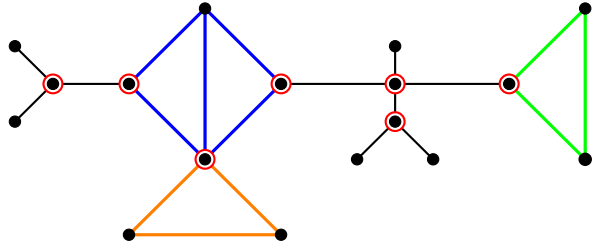

By replacing each brick of a connected graph by a star-shaped graph, one gets canonically a 
tree associated to the given graph: 

\begin{definition}    \label{def:bvtree}    \index{tree!brick-vertex}   
      The {\bf brick-vertex tree} $\cB\cV(\Gamma)$ of a connected graph $\Gamma$ 
      is the tree whose set of vertices is the union of the set of vertices of $\Gamma$ 
      and of a set of new {\bf brick-vertices} corresponding bijectively to the bricks of $\Gamma$,  
      its edges being either the bridges of $\Gamma$ or new edges 
      connecting each brick-vertex to the vertices of the corresponding brick. 
      Formally, this may be written as follows:
         \begin{itemize}
             \item   $V( \cB\cV(\Gamma) ) = V(\Gamma) \sqcup \{ \mathrm{bricks}  \:  
                          \mathrm{of} \: \Gamma\}$.
             \item   $ E( \cB\cV(\Gamma) ) = \{ \mathrm{bridges} \:  \mathrm{of} \: \Gamma\} 
                          \sqcup \{ [\overline{v}, \overline{b}], v \in V(\Gamma), b \mathrm{\:  is\:  a \: brick\:  of \:} 
                              \Gamma, v \in V(b)\}$. 
         \end{itemize}
      We denoted by $\boxed{\overline{v}}$ the vertex $v$ of $\Gamma$ when it is seen as a vertex of 
      $\cB\cV(\Gamma)$ and $\boxed{\overline{b}} \in V( \cB\cV(\Gamma) )$ the brick-vertex 
      representing the brick $b$ of $\Gamma$. 
\end{definition}

The notion of brick-vertex tree was introduced in \cite[Def. 1.34]{GBGPPPR 19}. It is strongly related 
to other notions introduced before either in general topology or in graph theory, as 
explained in \cite[Rems. 1.35, 2.50]{GBGPPPR 19}. 

Note that whenever $\Gamma$ is a tree, $ \cB\cV(\Gamma)$ is canonically isomorphic to it, 
as $\Gamma$ has no bricks.

\begin{example}  \label{ex:brickvertreeex}
    On the left side of Figure \ref{fig:Brickvertree} is repeated the graph $\Gamma$ of Figure 
    \ref{fig:Cutbridgeblocks}, with its cut-vertices and bricks emphasized. 
    On its right side is represented its associated 
    brick-vertex tree $\cB\cV(\Gamma)$. Each representative vertex of a brick is drawn with the 
    same color as its corresponding brick. The edges of $\cB\cV(\Gamma)$ which are not bridges of 
    $\Gamma$ are represented in magenta and thicker than the other edges.
\end{example}

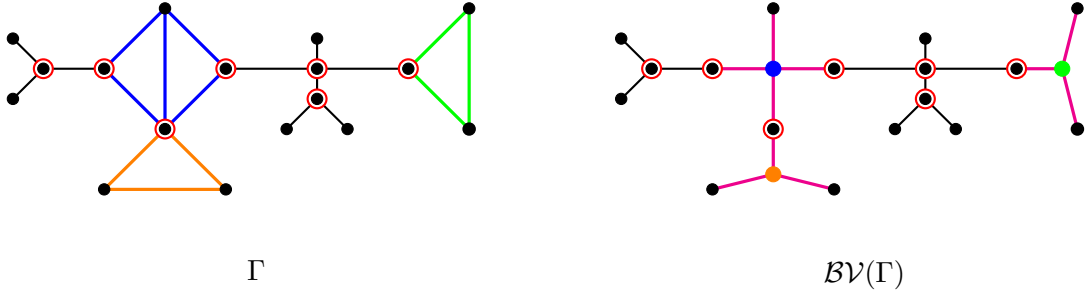
\begin{figure}[h!]
\begin{center}
\begin{tikzpicture}[scale=0.4]

   \draw [-, color=black, thick](0,0) -- (-1,1) ;
 \draw [-, color=black, thick](0,0) -- (-1,-1) ;
  \node[draw,circle, inner sep=1.5pt,color=black, fill=black] at (-1,1){};
  \node[draw,circle, inner sep=1.5pt,color=black, fill=black] at (-1,-1){};
 
   \draw [-, color=black, thick](0,0) -- (2,0) ;
   
    \draw [-, color=blue, very thick](4,2) -- (2,0) ;
    \draw [-, color=blue, very thick](4,-2) -- (2,0) ;
    \draw [-, color=blue, very thick](4,-2) -- (4,2) ;
       \draw [-, color=blue, very thick](4,-2) -- (6,0) ;
    \draw [-, color=blue, very thick](6,0) -- (4,2) ;
    
     \draw [-, color=orange, very thick](2,-4) -- (4,-2) ;
    \draw [-, color=orange, very thick](4,-2) -- (6,-4) ;
    \draw [-, color=orange, very thick](2,-4) -- (6,-4) ;
      \node[draw,circle, inner sep=1.5pt,color=black, fill=black] at (4,2){};
  \node[draw,circle, inner sep=2.5pt,color=red, thick, fill=white] at (4,-2){};
  \node[draw,circle, inner sep=1.5pt,color=black, fill=black] at (4,-2){};
   \node[draw,circle, inner sep=2.5pt,color=red, thick, fill=white] at (0,0){};
  \node[draw,circle, inner sep=1.5pt,color=black, fill=black] at (0,0){};
  \node[draw,circle, inner sep=2.5pt,color=red, thick, fill=white] at (2,0){};
  \node[draw,circle, inner sep=1.5pt,color=black, fill=black] at (2,0){};
    \node[draw,circle, inner sep=1.5pt,color=black, fill=black] at (6,-4){};
    \node[draw,circle, inner sep=1.5pt,color=black, fill=black] at (2,-4){};
    \draw [-, color=black, thick](6,0) -- (9,0) ;
    \draw [-, color=black, thick](9,0) -- (9,1) ;
    \draw [-, color=black, thick](9,-1) -- (9,0) ;
    \draw [-, color=black, thick](9,0) -- (12,0) ;
    
    \node[draw,circle, inner sep=2.5pt,color=red, thick, fill=white] at (6,0){};
  \node[draw,circle, inner sep=1.5pt,color=black, fill=black] at (6,0){};
     \node[draw,circle, inner sep=2.5pt,color=red, thick, fill=white] at (9,0){};
  \node[draw,circle, inner sep=1.5pt,color=black, fill=black] at (9,0){};
    
      \draw [-, color=black, thick](8,-2) -- (9,-1) ;
    \draw [-, color=black, thick](9,-1) -- (10,-2) ;
     \node[draw,circle, inner sep=2.5pt,color=red, thick, fill=white] at (9,-1){};
  \node[draw,circle, inner sep=1.5pt,color=black, fill=black] at (9,-1){};
  \node[draw,circle, inner sep=1.5pt,color=black, fill=black] at (8,-2){};
  \node[draw,circle, inner sep=1.5pt,color=black,  fill=black] at (10,-2){};
    \node[draw,circle, inner sep=1.5pt,color=black, fill=black] at (9,1){};
    \draw [-, color=green, very thick](12,0) -- (14,-2) ;
    \draw [-, color=green, very thick](12,0) -- (14,2) ;
    \draw [-, color=green, very thick](14,-2) -- (14,2) ;
       \node[draw,circle, inner sep=2.5pt,color=red, thick, fill=white] at (12,0){};
       \node[draw,circle, inner sep=1.5pt,color=black, fill=black] at (12,0){};
   \node[draw,circle, inner sep=1.5pt,color=black, fill=black] at (14,2){};
  \node[draw,circle, inner sep=1.5pt,color=black, thick, fill=black] at (14,-2){};

  \node [below] at (7,-6) {$\Gamma$};

  \begin{scope}[shift={(20,0)}]
  
   \draw [-, color=black, thick](0,0) -- (-1,1) ;
 \draw [-, color=black, thick](0,0) -- (-1,-1) ;
  \node[draw,circle, inner sep=1.5pt,color=black, fill=black] at (-1,1){};
  \node[draw,circle, inner sep=1.5pt,color=black, fill=black] at (-1,-1){};
 
   \draw [-, color=black, thick](0,0) -- (2,0) ;

    \draw [-, color=magenta, very thick](4,0) -- (2,0) ;
    \draw [-, color=magenta, very thick](4,0) -- (4,-2) ;
     \draw [-, color=magenta, very thick](4,0) -- (4,2) ;  
     \draw [-, color=magenta, very thick](4,0) -- (6,0) ;
     \node[draw,circle, inner sep=2pt,color=blue, fill=blue] at (4,0){};

     \draw [-, color=magenta, very thick](4,-3.5) -- (2,-4) ;
    \draw [-, color=magenta, very thick](4,-3.5) -- (4,-2) ;
     \draw [-, color=magenta, very thick](4,-3.5) -- (6,-4) ;
     \node[draw,circle, inner sep=2pt,color=orange, fill=orange] at (4,-3.5){};
    
      \node[draw,circle, inner sep=1.5pt,color=black, fill=black] at (4,2){};
  \node[draw,circle, inner sep=2.5pt,color=red, thick, fill=white] at (4,-2){};
  \node[draw,circle, inner sep=1.5pt,color=black, fill=black] at (4,-2){};
   \node[draw,circle, inner sep=2.5pt,color=red, thick, fill=white] at (0,0){};
  \node[draw,circle, inner sep=1.5pt,color=black, fill=black] at (0,0){};
  \node[draw,circle, inner sep=2.5pt,color=red, thick, fill=white] at (2,0){};
  \node[draw,circle, inner sep=1.5pt,color=black, fill=black] at (2,0){};
    \node[draw,circle, inner sep=1.5pt,color=black, fill=black] at (6,-4){};
    \node[draw,circle, inner sep=1.5pt,color=black, fill=black] at (2,-4){};
    \draw [-, color=black,  thick](6,0) -- (9,0) ;
    \draw [-, color=black,  thick](9,0) -- (9,1) ;
    \draw [-, color=black,  thick](9,-1) -- (9,0) ;
    \draw [-, color=black,  thick](9,0) -- (12,0) ;
    
    \node[draw,circle, inner sep=2.5pt,color=red, thick, fill=white] at (6,0){};
  \node[draw,circle, inner sep=1.5pt,color=black, fill=black] at (6,0){};
     \node[draw,circle, inner sep=2.5pt,color=red, thick, fill=white] at (9,0){};
  \node[draw,circle, inner sep=1.5pt,color=black, fill=black] at (9,0){};
    
    \draw [-, color=black, thick](8,-2) -- (9,-1) ;
    \draw [-, color=black, thick](9,-1) -- (10,-2) ;
     \node[draw,circle, inner sep=2.5pt,color=red, thick, fill=white] at (9,-1){};
  \node[draw,circle, inner sep=1.5pt,color=black, fill=black] at (9,-1){};
  \node[draw,circle, inner sep=1.5pt,color=black, fill=black] at (8,-2){};
  \node[draw,circle, inner sep=1.5pt,color=black,  fill=black] at (10,-2){};
    \node[draw,circle, inner sep=1.5pt,color=black, fill=black] at (9,1){};

     \draw [-, color=magenta, very thick](13.5,0) -- (12,0) ;
    \draw [-, color=magenta, very thick](13.5,0) -- (14,-2) ;
     \draw [-, color=magenta, very thick](13.5,0) -- (14,2) ;
    \node[draw,circle, inner sep=2pt,color=green, fill=green] at (13.5,0){};
    
       \node[draw,circle, inner sep=2.5pt,color=red, thick, fill=white] at (12,0){};
       \node[draw,circle, inner sep=1.5pt,color=black, fill=black] at (12,0){};
   \node[draw,circle, inner sep=1.5pt,color=black, fill=black] at (14,2){};
  \node[draw,circle, inner sep=1.5pt,color=black, thick, fill=black] at (14,-2){};

     \node [below] at (7,-6) {$\cB\cV(\Gamma)$};
     
   \end{scope}
 
 \end{tikzpicture}
\end{center}
\caption{The connected graph of Example \ref{ex:brickvertreeex} and its brick-vertex tree}
\label{fig:Brickvertree}
\end{figure}

The importance of the brick-vertex tree in our context stems from the 
following property of it (see \cite[Prop. 1.36]{GBGPPP 19}), formulated using the vocabulary introduced 
in Definition \ref{def:cut} and the notations introduced in Definition \ref{def:bvtree}:

\begin{proposition}  \label{prop:importbv}
   Let $\Gamma$ be a finite graph and $a,b,c \in V(\Gamma)$. Then $b$ separates 
   $a$ from $c$ in $\Gamma$ if and only if $\overline{b}$ separates 
   $\overline{a}$ from $\overline{c}$ in $\cB\cV(\Gamma)$. 
\end{proposition}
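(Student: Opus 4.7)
My plan is to prove both implications by contraposition, translating walks between $\Gamma$ and $\cB\cV(\Gamma)$ in both directions. The two structural facts I would rely on are (i) that $\cB\cV(\Gamma)$ is a tree (a preliminary observation following from the tree structure of the block decomposition of a connected graph) so that there is a unique path between any two of its vertices, and (ii) that every brick of $\Gamma$ is $2$-connected (being a maximal inseparable subgraph with at least three vertices), hence stays connected after the removal of any single one of its vertices. The case $b \in \{a,c\}$ corresponds symmetrically to $\overline{b} \in \{\overline{a},\overline{c}\}$ and is immediate from the bijection between original vertices of $\Gamma$ and their representatives in $\cB\cV(\Gamma)$, so I would assume $b \notin \{a,c\}$ throughout.

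For the forward direction, I would suppose that $b$ separates $a$ from $c$ in $\Gamma$ but that $\overline{a}$ and $\overline{c}$ are joined by a path in $\cB\cV(\Gamma) \setminus \{\overline{b}\}$. Since, by construction, brick-vertices in $\cB\cV(\Gamma)$ are adjacent only to original-vertex representatives, this path reads off a sequence $\overline{a} = \overline{v_0}, \overline{v_1}, \dots, \overline{v_m} = \overline{c}$ of original vertices in which each consecutive pair $(v_i,v_{i+1})$ is either joined by a bridge of $\Gamma$ or lies in a common brick $b_j$. I would then lift this to a walk in $\Gamma \setminus \{b\}$: each bridge is kept as it is, and each brick traversal $\overline{v_i}, \overline{b_j}, \overline{v_{i+1}}$ is replaced by a path from $v_i$ to $v_{i+1}$ inside $b_j$ avoiding $b$. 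Such a path exists because $b_j \setminus \{b\}$ is connected (either by $2$-connectedness of $b_j$ if $b \in V(b_j)$, or trivially otherwise) and contains $v_i,v_{i+1}$. Concatenating these segments produces a walk from $a$ to $c$ in $\Gamma \setminus \{b\}$, contradicting the hypothesis.

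The reverse direction is the easier one. Assuming $\overline{b}$ separates $\overline{a}$ from $\overline{c}$ in $\cB\cV(\Gamma)$ and that a path $a = v_0, \dots, v_k = c$ exists in $\Gamma$ avoiding $b$, I would construct a walk in $\cB\cV(\Gamma)$ by keeping every bridge-edge of the path as it is and by replacing every brick-edge $[v_i,v_{i+1}]$ (lying in some brick $b_j$) by the two-edge segment $\overline{v_i}, \overline{b_j}, \overline{v_{i+1}}$. No vertex of this walk equals $\overline{b}$, since each $v_i \neq b$ by assumption and each brick-vertex $\overline{b_j}$ is distinct from $\overline{b}$ (which is by construction an original-vertex representative, not a brick-vertex). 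Since $\cB\cV(\Gamma)$ is a tree, the existence of such a walk contradicts the hypothesis that $\overline{b}$ lies on the unique path from $\overline{a}$ to $\overline{c}$.

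The main obstacle is the forward direction, specifically the $2$-connectedness rerouting step: it is precisely the fact that bricks stay connected after removal of a vertex (whereas bridges have no internal vertices that could need rerouting) which justifies the asymmetric treatment of blocks in the definition of $\cB\cV(\Gamma)$, where bridges are preserved as edges and bricks are collapsed to stars. I expect this to be the only nontrivial step; standard graph-theoretic results on $2$-connectedness (existence of internally disjoint paths) would be invoked there.
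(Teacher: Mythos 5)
Your argument is correct. The paper itself states this proposition without proof, deferring to \cite[Prop. 1.36]{GBGPPPR 19}, and your two-way translation of paths --- in one direction contracting each detour $\overline{v_i}, \overline{b_j}, \overline{v_{i+1}}$ through a brick-vertex to a path inside the brick $b_j$ avoiding $b$, and in the other direction expanding each non-bridge edge to the corresponding two-edge detour through its brick-vertex --- is exactly the standard argument, so there is nothing to compare it against unfavorably. One small precision: you do not actually need bricks to be $2$-connected graphs on at least three vertices (in the paper's generality a graph is a compact $1$-complex, so a brick could in principle be a loop or a double edge); the only fact your rerouting step uses is that $b_j \setminus \{b\}$ remains connected for every vertex $b$ of the brick $b_j$, and that is literally the definition of an inseparable subgraph, hence holds for every block. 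With that observation your proof is complete as written.
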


We are ready now to state the strongest known generalization of P\l oski's theorem 
(see Theorem \ref{thm:ultrambv} below).

\section{Our strongest generalization of P\l oski's theorem}  \label{sec:ultrambvtree}
\medskip

In this section we formulate Theorem \ref{thm:ultrambv}, which generalizes 
Theorem \ref{thm:idtrees} to \emph{all} normal surface singularities and \emph{all} 
branches on them, using the notion of \emph{brick-vertex tree} introduced in the previous section. 
\medskip

Recall that the notion of \emph{brick-vertex tree} of a connected graph was 
introduced in Definition \ref{def:bvtree}. A fundamental property of normal 
surface singularities is that the dual graphs of their resolutions are connected 
(which is a particular case of the so-called \emph{Zariski's main theorem}, 
whose statement may be found in \cite[Thm. V.5.2]{H 77}). 
This implies that the dual graph of the preimage (see Definition \ref{def:embres}) 
of any finite set of branches on such a singularity is also connected. 
Therefore, one may speak about its corresponding brick-vertex tree. 
The {\bf convex hull} of a finite set of vertices of it is the union of the segments 
which join them pairwise.  \index{convex hull!in a tree}

Here is the announced generalization of Theorem \ref{thm:idtrees}, which is a slight 
reformulation of \cite[Thm. 1.42]{GBGPPPR 19}: 

\begin{theorem}      \label{thm:ultrambv}
     Let $S$ be a normal surface singularity. Consider a finite set $\cF$ of branches 
     on it and an embedded resolution $\pi: S^{\pi} \to S$ of $\cF$. Let $\Gamma$ be the 
     dual graph of the preimage $\pi^{-1} \cF$ of $\cF$ by $\pi$. Assume that the convex hull 
     $\mathrm{Conv}_{\cB\cV(\Gamma)}(\cF)$ of the strict transform of $\cF$ by $\pi$ 
     in the brick-vertex tree $\cB\cV(\Gamma)$ does not contain brick-vertices 
     of valency at least $4$ in $\mathrm{Conv}_{\cB\cV(\Gamma)}(\cF)$. Then 
     for all $L \in \cF$, the restriction of $u_L$ to $\cF \: \setminus \:  \{L\}$ 
     is an ultrametric and the corresponding rooted tree is isomorphic to 
     $\mathrm{Conv}_{\cB\cV(\Gamma)}(\cF)$, rooted at $L$. 
\end{theorem}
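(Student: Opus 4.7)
The plan is to reduce the theorem to a pure graph-theoretic statement about a certain distance function on $\Gamma$, then invoke the graph-theoretic result of Section~\ref{sec:thmgraph}. First I would use Mumford's intersection theory (Section~\ref{sec:mumfint}) to express each intersection number $A \cdot B$, $L \cdot A$, $L \cdot B$ in terms of the entries of $-M_\pi^{-1}$, where $M_\pi$ is the (negative-definite) intersection matrix of the exceptional divisor of $\pi$. If $v_A$ denotes the vertex of $\Gamma$ at which the strict transform of $A \in \cF$ meets the exceptional divisor, this rewrites $1/u_L(A,B)$ as a symmetric function $d_L(v_A, v_B)$ of pairs of vertices of $\Gamma$, depending only on $v_L$ and on the entries of $M_\pi^{-1}$.

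Next I would establish the crucial ``cut-vertex'' property of $d_L$: whenever $b$ is a cut-vertex of $\Gamma$ separating $v$ from $w$, one has
\[
     d_L(v,w) = d_L(v,b) + d_L(b,w).
\]
This is exactly the property alluded to in Section~\ref{sec:refineq}, and it follows from the classical multiplicative factorization of the entries of $M_\pi^{-1}$ through any cut-vertex, itself a consequence of the connectedness statement underlying Zariski's main theorem combined with the negative-definiteness of the intersection form. By Proposition~\ref{prop:importbv}, separation in $\Gamma$ coincides with separation in $\cB\cV(\Gamma)$, so $d_L$ becomes additive along any path in $\cB\cV(\Gamma)$ whose interior vertices are cut-vertices of $\Gamma$, i.e.\ along any path that does not cross a brick-vertex.

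The final step is to apply the graph-theoretic theorem of Section~\ref{sec:thmgraph}: under the valency hypothesis on $\mathrm{Conv}_{\cB\cV(\Gamma)}(\cF)$, the restriction of $d_L$ to the leaves of this convex hull is a tree metric whose underlying tree is precisely $\mathrm{Conv}_{\cB\cV(\Gamma)}(\cF)$. Inverting and rooting at $L$, together with Proposition~\ref{prop:equivultramtree} applied to $d_L$ as a depth function, yields both the ultrametric property of $u_L$ on $\cF \setminus \{L\}$ and the asserted isomorphism of rooted trees. The main obstacle is the graph-theoretic step, and specifically the need for the valency condition: inside a brick the entries of $M_\pi^{-1}$ do \emph{not} factor through any interior vertex, so when a brick-vertex in $\mathrm{Conv}_{\cB\cV(\Gamma)}(\cF)$ has valency at most $3$ one can still pair up the three incoming paths into an isosceles triple, but a valency $\geq 4$ brick-vertex produces four distinguished directions within a single brick for which no such pairing exists, and the ultrametric inequality generically fails. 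Making this dichotomy precise and combinatorially clean, and showing that the cut-vertex additivity plus the valency bound force the ultrametric inequality on every triple of elements of $\cF$, is the technical heart of the argument.
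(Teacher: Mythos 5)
Your overall architecture --- express intersection numbers through the inverse of the intersection matrix, prove an additivity-across-cut-vertices property, and then feed the result into the graph-theoretic Theorem \ref{thm:BVuse} --- is the same as the paper's. But the key intermediate identity you rely on is false, and this is not a cosmetic issue. Writing $\langle a,b\rangle := -E^{\vee}_a\cdot E^{\vee}_b$ as in Definition \ref{def:bracket}, Proposition \ref{prop:intbra} gives $1/u_L(A,B)=\langle a,b\rangle/(\langle l,a\rangle\langle l,b\rangle)=:d_L(v_A,v_B)$. The factorization of the entries of $M_\pi^{-1}$ through a separating vertex $b$ is \emph{multiplicative}: $\langle v,w\rangle\,\langle b,b\rangle=\langle v,b\rangle\,\langle b,w\rangle$ (the equality case of Theorem \ref{thm:crucineq}). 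It does not yield $d_L(v,w)=d_L(v,b)+d_L(b,w)$; for instance, formally substituting $l=b$ into both sides and using the factorization makes the right-hand side equal to $2/\langle b,b\rangle$ and the left-hand side equal to $1/\langle b,b\rangle$. Indeed $d_L=1/u_L$ cannot be additive along paths: the inverse of an ultrametric is in general not even a metric (if the two smallest of the three values of $1/u_L$ on a triple equal $m$ and the largest exceeds $2m$, the triangle inequality fails). Consequently $d_L$ is not a distance satisfying the hypothesis of Theorem \ref{thm:BVuse}, and your final step --- applying that theorem to $d_L$ and then treating $d_L$ ``as a depth function'' in Proposition \ref{prop:equivultramtree} (a depth function lives on the vertices of a rooted tree, not on pairs) --- does not go through.

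The paper repairs exactly this point by introducing a logarithmic, $L$-independent normalization: the angular distance $\rho(a,b)=-\log\bigl(\langle a,b\rangle^2/(\langle a,a\rangle\langle b,b\rangle)\bigr)$ of Definition \ref{def:rho}. Theorem \ref{thm:isdist} states that $\rho$ is a genuine distance on $\cP(\pi)$ which is additive at $b$ exactly when $b$ separates, so $\rho$ (not $d_L$) is the input to Theorem \ref{thm:BVuse}. The translation back to $u_L$ is then not via triangle additivity but via the \emph{four points condition}: Proposition \ref{prop:reform} shows that the ultrametric inequality for $u_L$ on $\{A,B,C\}$ is equivalent to the four points condition for $\rho$ on $\{l,a,b,c\}$, and Proposition \ref{prop:injrel} identifies the rooted tree of $u_L$ with the tree hull of $(\cF^{\pi},\rho)$ rooted at $l$ (after refining the resolution so that $A\mapsto E_a$ is injective, a hypothesis you should also add). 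With $d_L$ replaced by $\rho$ and the triangle-additivity step replaced by this four-points translation, your outline becomes the paper's proof via Theorem \ref{thm:reformangdist}.
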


\begin{example}  \label{ex:brickvertree}
     Assume that the dual graph $\Gamma$ of $\pi^{-1} \cF$ is as shown on the left side 
     of Figure \ref{fig:Hypsatisf}. The vertices representing the strict transforms of 
     the branches of the set $\cF$ are drawn arrowheaded. 
     Note that the subgraph which is the dual graph of 
     the exceptional divisor is the same as the graph of Figure \ref{fig:Cutbridgeblocks}. 
     On the right side of Figure \ref{fig:Hypsatisf} is represented using thick red 
     segments the convex 
     hull $\mbox{Conv}_{\cB\cV(\Gamma)}(\cF)$. We see that the hypothesis of 
     Theorem \ref{thm:ultrambv} is satisfied. Indeed, the convex hull contains only 
     two brick-vertices, which are of valency $2$ and $3$ in $\mbox{Conv}_{\cB\cV(\Gamma)}(\cF)$. 
     Note that the blue one is of valency $4$ in the dual graph $\Gamma$, which shows 
     the importance of looking at the valency in the convex hull 
     $\mbox{Conv}_{\cB\cV(\Gamma)}(\cF)$, not in $\Gamma$. 
     
\end{example}

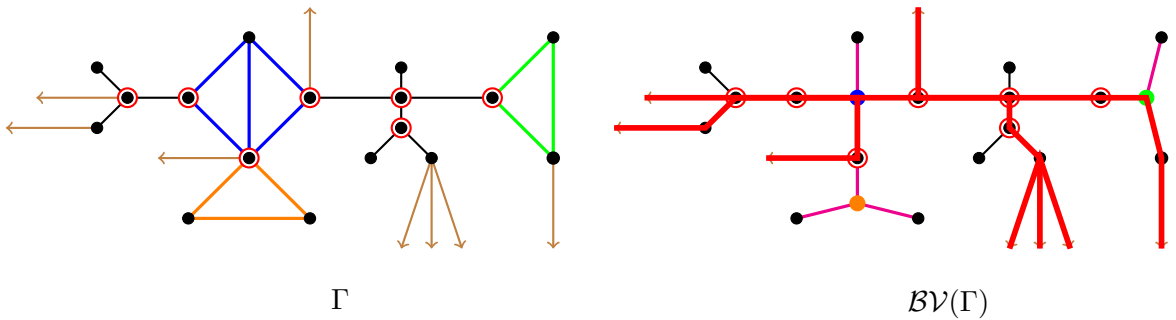
\begin{figure}[h!]
\begin{center}
\begin{tikzpicture}[scale=0.4]

     \draw [->, color=brown, thick] (0,0)--(-3,0);
     \draw [->, color=brown, thick] (-1,-1)--(-4,-1);
     \draw [->, color=brown, thick] (4,-2)--(1,-2);
     \draw [->, color=brown, thick] (6,0)--(6,3);
        \draw [->, color=brown, thick] (10,-2)--(9,-5);
        \draw [->, color=brown, thick] (10,-2)--(10,-5);
        \draw [->, color=brown, thick] (10,-2)--(11,-5);
    \draw [->, color=brown, thick] (14,-2)--(14,-5);
  
   \draw [-, color=black, thick](0,0) -- (-1,1) ;
 \draw [-, color=black, thick](0,0) -- (-1,-1) ;
  \node[draw,circle, inner sep=1.5pt,color=black, fill=black] at (-1,1){};
  \node[draw,circle, inner sep=1.5pt,color=black, fill=black] at (-1,-1){};
 
   \draw [-, color=black, thick](0,0) -- (2,0) ;
   
    \draw [-, color=blue, very thick](4,2) -- (2,0) ;
    \draw [-, color=blue, very thick](4,-2) -- (2,0) ;
    \draw [-, color=blue, very thick](4,-2) -- (4,2) ;
       \draw [-, color=blue, very thick](4,-2) -- (6,0) ;
    \draw [-, color=blue, very thick](6,0) -- (4,2) ;
    
     \draw [-, color=orange, very thick](2,-4) -- (4,-2) ;
    \draw [-, color=orange, very thick](4,-2) -- (6,-4) ;
    \draw [-, color=orange, very thick](2,-4) -- (6,-4) ;
      \node[draw,circle, inner sep=1.5pt,color=black, fill=black] at (4,2){};
  \node[draw,circle, inner sep=2.5pt,color=red, thick, fill=white] at (4,-2){};
  \node[draw,circle, inner sep=1.5pt,color=black, fill=black] at (4,-2){};
   \node[draw,circle, inner sep=2.5pt,color=red, thick, fill=white] at (0,0){};
  \node[draw,circle, inner sep=1.5pt,color=black, fill=black] at (0,0){};
  \node[draw,circle, inner sep=2.5pt,color=red, thick, fill=white] at (2,0){};
  \node[draw,circle, inner sep=1.5pt,color=black, fill=black] at (2,0){};
    \node[draw,circle, inner sep=1.5pt,color=black, fill=black] at (6,-4){};
    \node[draw,circle, inner sep=1.5pt,color=black, fill=black] at (2,-4){};
    \draw [-, color=black, thick](6,0) -- (9,0) ;
    \draw [-, color=black, thick](9,0) -- (9,1) ;
    \draw [-, color=black, thick](9,-1) -- (9,0) ;
    \draw [-, color=black, thick](9,0) -- (12,0) ;
    
    \node[draw,circle, inner sep=2.5pt,color=red, thick, fill=white] at (6,0){};
  \node[draw,circle, inner sep=1.5pt,color=black, fill=black] at (6,0){};
     \node[draw,circle, inner sep=2.5pt,color=red, thick, fill=white] at (9,0){};
  \node[draw,circle, inner sep=1.5pt,color=black, fill=black] at (9,0){};
    
      \draw [-, color=black, thick](8,-2) -- (9,-1) ;
    \draw [-, color=black, thick](9,-1) -- (10,-2) ;
     \node[draw,circle, inner sep=2.5pt,color=red, thick, fill=white] at (9,-1){};
  \node[draw,circle, inner sep=1.5pt,color=black, fill=black] at (9,-1){};
  \node[draw,circle, inner sep=1.5pt,color=black, fill=black] at (8,-2){};
  \node[draw,circle, inner sep=1.5pt,color=black,  fill=black] at (10,-2){};
    \node[draw,circle, inner sep=1.5pt,color=black, fill=black] at (9,1){};
    \draw [-, color=green, very thick](12,0) -- (14,-2) ;
    \draw [-, color=green, very thick](12,0) -- (14,2) ;
    \draw [-, color=green, very thick](14,-2) -- (14,2) ;
       \node[draw,circle, inner sep=2.5pt,color=red, thick, fill=white] at (12,0){};
       \node[draw,circle, inner sep=1.5pt,color=black, fill=black] at (12,0){};
   \node[draw,circle, inner sep=1.5pt,color=black, fill=black] at (14,2){};
  \node[draw,circle, inner sep=1.5pt,color=black, thick, fill=black] at (14,-2){};

  \node [below] at (7,-6) {$\Gamma$};

  \begin{scope}[shift={(20,0)}]

  \draw [->, color=brown, thick] (0,0)--(-3,0);
     \draw [->, color=brown, thick] (-1,-1)--(-4,-1);
     \draw [->, color=brown, thick] (4,-2)--(1,-2);
     \draw [->, color=brown, thick] (6,0)--(6,3);
        \draw [->, color=brown, thick] (10,-2)--(9,-5);
        \draw [->, color=brown, thick] (10,-2)--(10,-5);
        \draw [->, color=brown, thick] (10,-2)--(11,-5);
    \draw [->, color=brown, thick] (14,-2)--(14,-5);
  
   \draw [-, color=black, thick](0,0) -- (-1,1) ;
 \draw [-, color=black, thick](0,0) -- (-1,-1) ;
  \node[draw,circle, inner sep=1.5pt,color=black, fill=black] at (-1,1){};
  \node[draw,circle, inner sep=1.5pt,color=black, fill=black] at (-1,-1){};
 
   \draw [-, color=black, thick](0,0) -- (2,0) ;

    \draw [-, color=magenta, very thick](4,0) -- (2,0) ;
    \draw [-, color=magenta, very thick](4,0) -- (4,-2) ;
     \draw [-, color=magenta, very thick](4,0) -- (4,2) ;  
     \draw [-, color=magenta, very thick](4,0) -- (6,0) ;
     \node[draw,circle, inner sep=2pt,color=blue, fill=blue] at (4,0){};

     \draw [-, color=magenta, very thick](4,-3.5) -- (2,-4) ;
    \draw [-, color=magenta, very thick](4,-3.5) -- (4,-2) ;
     \draw [-, color=magenta, very thick](4,-3.5) -- (6,-4) ;
     \node[draw,circle, inner sep=2pt,color=orange, fill=orange] at (4,-3.5){};
    
      \node[draw,circle, inner sep=1.5pt,color=black, fill=black] at (4,2){};
  \node[draw,circle, inner sep=2.5pt,color=red, thick, fill=white] at (4,-2){};
  \node[draw,circle, inner sep=1.5pt,color=black, fill=black] at (4,-2){};
   \node[draw,circle, inner sep=2.5pt,color=red, thick, fill=white] at (0,0){};
  \node[draw,circle, inner sep=1.5pt,color=black, fill=black] at (0,0){};
  \node[draw,circle, inner sep=2.5pt,color=red, thick, fill=white] at (2,0){};
  \node[draw,circle, inner sep=1.5pt,color=black, fill=black] at (2,0){};
    \node[draw,circle, inner sep=1.5pt,color=black, fill=black] at (6,-4){};
    \node[draw,circle, inner sep=1.5pt,color=black, fill=black] at (2,-4){};
    \draw [-, color=black, thick](6,0) -- (9,0) ;
    \draw [-, color=black, thick](9,0) -- (9,1) ;
    \draw [-, color=black,  thick](9,-1) -- (9,0) ;
    \draw [-, color=black,  thick](9,0) -- (12,0) ;
    
    \node[draw,circle, inner sep=2.5pt,color=red, thick, fill=white] at (6,0){};
  \node[draw,circle, inner sep=1.5pt,color=black, fill=black] at (6,0){};
     \node[draw,circle, inner sep=2.5pt,color=red, thick, fill=white] at (9,0){};
  \node[draw,circle, inner sep=1.5pt,color=black, fill=black] at (9,0){};
    
    \draw [-, color=black, thick](8,-2) -- (9,-1) ;
    \draw [-, color=black, thick](9,-1) -- (10,-2) ;
     \node[draw,circle, inner sep=2.5pt,color=red, thick, fill=white] at (9,-1){};
  \node[draw,circle, inner sep=1.5pt,color=black, fill=black] at (9,-1){};
  \node[draw,circle, inner sep=1.5pt,color=black, fill=black] at (8,-2){};
  \node[draw,circle, inner sep=1.5pt,color=black,  fill=black] at (10,-2){};
    \node[draw,circle, inner sep=1.5pt,color=black, fill=black] at (9,1){};

     \draw [-, color=magenta, very thick](13.5,0) -- (12,0) ;
    \draw [-, color=magenta, very thick](13.5,0) -- (14,-2) ;
     \draw [-, color=magenta, very thick](13.5,0) -- (14,2) ;
    \node[draw,circle, inner sep=2pt,color=green, fill=green] at (13.5,0){};
    
       \node[draw,circle, inner sep=2.5pt,color=red, thick, fill=white] at (12,0){};
       \node[draw,circle, inner sep=1.5pt,color=black, fill=black] at (12,0){};
   \node[draw,circle, inner sep=1.5pt,color=black, fill=black] at (14,2){};
  \node[draw,circle, inner sep=1.5pt,color=black, thick, fill=black] at (14,-2){};
  
   \draw [-, line width=0.75mm, color=red] (-3,0)--(13.5,0)--(14,-2)--(14,-5);
   \draw [-, line width=0.75mm,  color=red] (-4,-1)--(-1,-1)--(0,0)--(4,0)--(4,-2)--(1,-2);
   \draw [-, line width=0.75mm, color=red] (6,3)--(6,0)--(9,0)--(9,-1)--(10,-2)--(10,-5);
   \draw [-, line width=0.75mm, color=red] (9,-5)--(10,-2)--(11,-5);

     \node [below] at (7,-6) {$\cB\cV(\Gamma)$};
     
   \end{scope}
 
 \end{tikzpicture}
\end{center}
\caption{An example where the hypothesis of Theorem \ref{thm:ultrambv} is satisfied}
\label{fig:Hypsatisf}
\end{figure}

As shown in \cite[Ex. 1.44]{GBGPPPR 19}, the condition about valency is not necessary 
in general for $u_L$ to be an ultrametric on $\cF  \: \setminus \:  \{L\}$.

Note that we have expressed Theorem \ref{thm:ultrambv} in a slightly different form 
than the equivalent Theorem D of the introduction. Namely, we included $L$ 
in the branches of $\cF$. This formulation emphasizes the symmetry of the situation: 
all the choices of reference branch inside $\cF$ lead to the same tree, only the root 
being changed. In fact, we will obtain Theorem \ref{thm:ultrambv} 
as a consequence of Theorem \ref{thm:reformangdist}, in which no branch plays 
any more a special role.

Before that, we will explain in the next section Mumford's 
definition of intersection number of two curve singularities drawn on an arbitrary 
normal surface singularity, which allows to define in turn the functions $u_L$ appearing 
in the statement of Theorem \ref{thm:ultrambv}.

\section{Mumford's intersection theory}
\label{sec:mumfint}

In this section we explain Mumford's definition of intersection number of 
Weil divisors on a normal surface singularity, introduced in his 1961 paper \cite{M 61}. 
It is based on Theorem \ref{thm:negdef}, stating that the intersection form of any resolution of a 
normal surface singularity is negative definite. This theorem being 
fundamental for the study of surface singularities, we present a detailed proof of it. 
\medskip

Let $\pi : S^{\pi} \to S$ be a resolution of the normal surface 
singularity $S$. Denote by $(E_u)_{u \in \cP(\pi)}$ the collection of 
irreducible components of the exceptional divisor $E^{\pi}$ of $\pi$ 
(see Definition \ref{def:resol}). 

Denote by:
    \[ \boxed{\cE(\pi)_{\R}} := \bigoplus_{u \in \cP(\pi)} \R E_u \]
 the real vector space freely generated by those prime divisors, that is, 
 the space of real divisors supported by $E^{\pi}$. 
 It is endowed with a symmetric bilinear form $(D_1, D_2) \to D_1 \cdot D_2$ 
 given by intersecting the corresponding compact cycles on $S^{\pi}$. 
 We call it {\bf the intersection form}. Its following fundamental property was proved by 
 Du Val \cite{V 44} and Mumford \cite{M 61}:

 \begin{theorem}   \label{thm:negdef}
    The intersection form on $ \cE(\pi)_{\R}$ is negative definite. 
 \end{theorem}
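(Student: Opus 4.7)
My plan is to follow the classical Du Val--Mumford argument, which reduces negative definiteness to two ingredients: the existence of a suitable ``negative'' effective divisor on $S^{\pi}$, and a clever algebraic manipulation of the quadratic form combined with the connectedness of $E^{\pi}$.

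First I would construct an auxiliary divisor. Pick any nonzero $f \in \m_{S,s}$ and consider the principal divisor $\mathrm{div}(f \circ \pi)$ on $S^{\pi}$. Since $f$ vanishes at $s$, the pullback $f\circ\pi$ vanishes along $\pi^{-1}(s)=E^{\pi}$, so every component $E_u$ occurs with a strictly positive multiplicity $m_u$. Write
\[ \mathrm{div}(f\circ \pi) \:=\:  Z + H, \qquad Z \:=\: \sum_{u \in \cP(\pi)} m_u E_u, \]
where $H$ is the strict transform of $\mathrm{div}(f)$ (an effective divisor meeting $E^{\pi}$ only in finitely many points). Because $\mathrm{div}(f\circ\pi)$ is principal, its intersection with any compact curve on $S^{\pi}$ vanishes, so $Z\cdot E_u = -H\cdot E_u \leq 0$ for every $u$, with strict inequality for at least one $u$ (any $u$ such that $E_u$ meets $H$, which exists because $H$ is a nonempty curve and $\pi(H)$ is not $\{s\}$).

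Next, for an arbitrary $D = \sum_u a_u E_u \in \cE(\pi)_{\R}$, set $b_u := a_u/m_u$. The key identity comes from expanding the manifestly non-negative quantity $\sum_{u\neq v}(b_u-b_v)^2\, m_u m_v\, (E_u\cdot E_v)$, using $E_u\cdot E_v\geq 0$ for $u\neq v$. A direct computation, in which one substitutes $\sum_{v\neq u} m_v (E_u\cdot E_v) = Z\cdot E_u - m_u E_u^2$, yields
\[ 2\, D^2 \:=\: 2\sum_{u} b_u^2 \, m_u\, (Z\cdot E_u) \;-\; \sum_{u\neq v}(b_u-b_v)^2 \, m_u m_v\, (E_u\cdot E_v). \]
Both terms on the right are non-positive by construction of $Z$, so $D^2 \leq 0$: the form is negative semi-definite.

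Finally I would handle the equality case to get strict negativity. Suppose $D\neq 0$ and $D^2=0$. Then both summands vanish separately: $(b_u - b_v)^2 (E_u\cdot E_v) = 0$ forces $b_u = b_v$ whenever $E_u$ meets $E_v$, and the first sum being zero forces $b_u = 0$ for every $u$ with $Z\cdot E_u < 0$. Since $E^{\pi}$ is connected (by Zariski's main theorem, invoked in the paper), the function $u\mapsto b_u$ must be constant on $\cP(\pi)$, and the existence of at least one $u$ with $Z\cdot E_u<0$ forces that constant to be $0$, contradicting $D\neq 0$. The main obstacle is really only the construction of $Z$: one must verify simultaneously that it is effective with full support on $E^{\pi}$, that $Z\cdot E_u\leq 0$ uniformly, and that the inequality is strict somewhere. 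Once this input is in hand, the algebraic identity together with connectedness of $E^{\pi}$ makes the remaining argument essentially formal.
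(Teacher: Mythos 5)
Your proposal is correct and is essentially the paper's own proof (the classical Du Val--Mumford argument): your $Z=\sum_u m_u E_u$ and $b_u=a_u/m_u$ are the paper's $\sigma=\sum_u e_u$ and $x_u$, your key identity is equation (\ref{eq:rewriting}) up to a factor of $2$ from summing over ordered rather than unordered pairs, and the treatment of the equality case via connectedness of $\Gamma(\pi)$ is identical.
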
 
 
 \begin{proof}
     The following proof is an expansion of that given by Mumford in \cite{M 61}. 
     
    The singularity $S$ being normal, the exceptional divisor $E^{\pi}$ is connected 
    (this is a particular case of \emph{Zariski's main theorem}, see \cite[Thm. V.5.2]{H 77}). 
    Therefore: 
       \begin{equation}  \label{eq:conngraph}
          \mbox{The dual graph } \Gamma(\pi) \mbox{ is connected.} 
       \end{equation}
    Consider any germ of holomorphic function $f$ on $(S,s)$, vanishing at $s$, and 
    look at the divisor of its lift to the surface $S^{\pi}$:
       \begin{equation}   \label{eq:liftfunct}
            (\pi^* f) = \sum_{u \in \cP(\pi)} a_u E_u + (\pi^* f)_{str}.
       \end{equation}
     Here $ (\pi^* f)_{str}$ denotes the strict transform of the divisor defined by $f$ on $S$. 
     Denote also:
        \begin{equation}   \label{eq:newnot}
           \left\{ \begin{array}{l}
                        \boxed{e_u} := a_u E_u  \in  \cE(\pi)_{\R}, \mbox{ for all } u \in \cP(\pi), \\
                        \boxed{\sigma} : = \displaystyle{ \sum_{u \in \cP(\pi)} e_u \in  \cE(\pi)_{\R}}. 
                    \end{array} \right.
       \end{equation}
     As $f$ vanishes at the point $s$, its lift $\pi^* f$ vanishes along each component $E_u$ 
     of $E^{\pi}$, therefore $a_u >0$ for every $u \in \cP(u)$. We deduce that $(e_u)_{u \in \cP(\pi)}$ 
     is a basis of $\cE(\pi)_{\R}$ and that:
       \begin{equation}   \label{eq:ineqint}
            e_u \cdot e_v \geq 0, \mbox{ for all } u,v \in \cP(\pi) \mbox{ such that } u \neq v.
       \end{equation}
     The divisor $(\pi^* f)$ being principal, its associated line bundle is trivial. Therefore:  
       \begin{equation} \label{eq:intvanish} 
            (\pi^* f) \cdot E_u =0 \mbox{ for every } u \in  \cP(\pi), 
       \end{equation}
     because this intersection number is equal by definition to the degree of the restriction 
     of this line bundle to the curve $E_u$. By combining the relations (\ref{eq:liftfunct}), 
     (\ref{eq:newnot}) and (\ref{eq:intvanish}), we deduce that:
         \begin{equation}   \label{eq:equalint}
             \sigma \cdot e_u = - a_u  (\pi^* f)_{str} \cdot E_u, \mbox{ for every } u \in  \cP(\pi). 
         \end{equation}
     As the germ of effective divisor $(\pi^* f)_{str}$ along $E^{\pi}$ has no components of 
     $E^{\pi}$ in its support, the intersection numbers $(\pi^* f)_{str} \cdot E_u$ are all non-negative. 
     Moreover, at least one of them is positive, because the divisor $(\pi^* f)_{str}$ is non-zero. 
     By combining this fact with relations (\ref{eq:equalint}) and with the inequalities $a_u > 0$, 
     we get:
        \begin{equation}   \label{eq:ineqintsigma}
           \left\{ 
             \begin{array}{l}
                 \sigma \cdot e_u \leq 0, \mbox{ for every } u \in  \cP(\pi), \\
                 \mbox{there exists } u_0 \in \cP(\pi) \mbox{ such that } \sigma \cdot e_{u_0} < 0. 
             \end{array} 
                \right.  
         \end{equation}
         
         Consider now an arbitrary element $\tau \in \cE(\pi)_{\R} \:  \setminus \:  \{0\}$. 
         One may develop it in the basis $(e_u)_{u \in \cP(\pi)}$:
             \begin{equation} \label{eq:exptau} 
                   \tau = \sum_{u \in \cP(\pi)} x_u e_u . 
             \end{equation}
             
        We will show that $\tau^2 < 0$. As $\tau$ was chosen as an arbitrary non-zero 
   vector, this will imply that the intersection form on $ \cE(\pi)_{\R}$ is indeed negative definite. 
   The trick is to express the self-intersection $\tau^2$ using the expansion (\ref{eq:exptau}), 
   then to develop it 
   by bilinearity and to replace the vectors $e_u$ by $\sigma - \sum_{v \neq u} e_v$ in a precise 
   place:
     \[ \begin{array}{ll}
            \tau^2 &  = \displaystyle{ \left(\sum_{u} x_u e_u \right)^2 =} \\ 
              &   = \displaystyle{ \sum_u x_u^2 e_u^2 + 2 \sum_{u < v} x_u x_v e_u \cdot e_v =} \\
              &  =  \displaystyle{\sum_u x_u^2 \left(\sigma - \sum_{v \neq u} e_v \right) \cdot e_u + 
                    2 \sum_{u < v} x_u x_v e_u \cdot e_v =} \\
              &  = \displaystyle{\sum_u  x_u^2 (\sigma \cdot e_u)   - \sum_{u \neq v} x_u^2 e_u \cdot e_v + 
                       2 \sum_{u < v} x_u x_v e_u \cdot e_v =} \\ 
             &  = \displaystyle{\sum_u  x_u^2 (\sigma \cdot e_u)   - \sum_{u < v} (x_u - x_v)^2 e_u \cdot e_v.}
         \end{array} \]
      We got the equality:
      
    \begin{equation}  \label{eq:rewriting}
        \tau^2 = \displaystyle{\sum_u  x_u^2 (\sigma \cdot e_u)   - \sum_{u < v} (x_u - x_v)^2 e_u \cdot e_v.}
    \end{equation}
     Using the inequalities (\ref{eq:ineqint}) and (\ref{eq:ineqintsigma}), we deduce that  its right-hand 
     side is non-positive, therefore the intersection form is negative semi-definite. 
     
     It remains to show that $\tau^2 < 0$. Assume by contradiction that $\tau^2=0$. 
     Equality (\ref{eq:rewriting}) shows that the following equalities are simultaneously satisfied: 
          \begin{equation} \label{eq:left}
              \sum_u  x_u^2 (\sigma \cdot e_u)  = 0, 
          \end{equation}
          
          \begin{equation} \label{eq:right}
              (x_u - x_v)^2 e_u \cdot e_v =0, \mbox{ for all } u < v. 
          \end{equation}
      The relations (\ref{eq:right}) imply that $x_u = x_v$ whenever $e_u \cdot e_v >0$. 
      As $e_u = a_u E_u$ with $a_u >0$, the inequality $e_u \cdot e_v >0$ is equivalent 
      with $E_u \cdot E_v >0$, that is, with the fact that $[u,v]$ is an edge of the dual graph 
      $\Gamma(\pi)$. This dual graph being connected (see (\ref{eq:conngraph})), we see 
      that $x_u = x_v$ for all $u,v \in \cP(\pi)$. Consider now an index $u_0$ satisfying 
      the second condition of relations (\ref{eq:ineqintsigma}). Equation (\ref{eq:left}) 
      implies that $x_{u_0}=0$. Therefore all the coefficients $x_u$ vanish, which contradicts 
      the hypothesis that $\tau \neq 0$. 
 \end{proof}

As a consequence of Theorem \ref{thm:negdef}, 
one may define the {\bf dual basis} $(\boxed{E^{\vee}_u})_{u \in \cP(\pi)}$ 
of the basis $(E_u)_{u \in \cP(\pi)}$ by the following relations, in which 
$ \delta_{uv}$ denotes Kronecker's delta-symbol:
   \begin{equation}  \label{eq:expbasis}
        E^{\vee}_u \cdot E_v = \delta_{uv}, \:   \mbox{for all} \:  (u,v) \in \cP(\pi)^2. 
    \end{equation}
   
   By associating to each prime divisor $E_u$ 
the corresponding valuation of the local ring $\cO_{S,s}$, computing the orders  
of vanishing along $E_u$ of the pull-backs $\pi^* f$ of the functions 
$f \in \cO_{S,s}$, one injects the set $\cP(\pi)$ in the set of 
real-valued valuations of $\cO_{S,s}$. This allows to see the index $u$ of $E_u$ 
as a valuation. Such valuations are called {\em divisorial}. If $u$ denotes a divisorial 
valuation, it has a center on any resolution, which is either a point or an irreducible 
component of the exceptional divisor. In the second case, one says that 
the valuation {\bf appears in the resolution}. 
   The following notion, inspired by approaches of Favre $\&$ Jonsson 
   \cite[App. A]{FJ 04} and \cite[Sect. 7.3.6]{J 15}, was introduced in 
   \cite[Def. 1.6]{GBGPPPR 19}:

\begin{definition}   \label{def:bracket}   \index{bracket of two divisorial valuations}
    Let $u,v$ be two divisorial valuations of $S$. Consider a resolution 
    of $S$ in which both $u$ and $v$ appear. Then their {\bf bracket} is defined by:
       \[\boxed{\langle u, v \rangle} := - E^{\vee}_u \cdot E^{\vee}_v. \]
\end{definition}

The bracket $\boxed{\langle u, v \rangle}$ may be interpreted as the intersection 
number of two Weil divisors on $S$ associated to the divisors $E_u$ and $E_v$ 
(see Proposition \ref{prop:intbra} below). As a consequence, it is well-defined. 
That is, if the divisorial valuations $u,v$ are fixed, then their bracket does not depend 
on the resolution in which they appear. This fact  
may be also proved using the property that any two resolutions of $S$ are related by a sequence 
of blow ups and blow downs (see \cite[Prop. 1.5]{GBGPPPR 19}). 

It is a consequence of Theorem \ref{thm:negdef} that the brackets are all non-negative 
(see \cite[Prop. 1.4]{GBGPPPR 19}). Moreover, by the Cauchy-Schwarz inequality applied 
to the opposite of the intersection form:

\begin{lemma}  \label{lem:CS}
      For every $a,b \in \cP(\pi)$:
         \[ \langle a,b\rangle^2 \leq \langle a, a \rangle \langle b,b\rangle, \]
       with equality if and only if $a=b$. 
\end{lemma}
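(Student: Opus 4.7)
The plan is to deduce the inequality by a direct application of the Cauchy--Schwarz inequality to the inner product obtained by reversing the sign of the intersection form, as signalled in the statement.

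First I would note that, by Theorem \ref{thm:negdef}, the intersection form is negative definite on $\cE(\pi)_{\R}$, so the bilinear form
\[ (D_1, D_2) \longmapsto -\, D_1 \cdot D_2 \]
is a genuine scalar product on $\cE(\pi)_{\R}$. The dual vectors $E^{\vee}_a$ and $E^{\vee}_b$ belong to this Euclidean space (we take a common resolution in which both $a$ and $b$ appear, which exists since $a,b \in \cP(\pi)$). By Definition \ref{def:bracket}, the brackets $\langle a,b\rangle$, $\langle a,a\rangle$ and $\langle b,b\rangle$ are precisely the values of this scalar product on the pairs $(E^{\vee}_a, E^{\vee}_b)$, $(E^{\vee}_a, E^{\vee}_a)$ and $(E^{\vee}_b, E^{\vee}_b)$ respectively.

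Then I would apply the Cauchy--Schwarz inequality to the vectors $E^{\vee}_a$ and $E^{\vee}_b$ in this Euclidean space, which immediately yields the required inequality $\langle a,b\rangle^2 \leq \langle a,a\rangle \, \langle b,b\rangle$. The standard equality case of Cauchy--Schwarz tells us that equality holds if and only if $E^{\vee}_a$ and $E^{\vee}_b$ are collinear.

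The only slightly non-routine step is the ``only if'' direction of the equality case. For this I would use the fact that $(E_u)_{u \in \cP(\pi)}$ is a basis of $\cE(\pi)_{\R}$ and that the intersection form is non-degenerate (by Theorem \ref{thm:negdef} again), so its dual family $(E^{\vee}_u)_{u \in \cP(\pi)}$, characterized by relation (\ref{eq:expbasis}), is also a basis of $\cE(\pi)_{\R}$. In particular, if $a \neq b$ then $E^{\vee}_a$ and $E^{\vee}_b$ are two distinct basis vectors, hence linearly independent, ruling out equality. Conversely, if $a=b$ the equality is trivial. I do not anticipate any real obstacle here: the whole argument is essentially a one-line consequence of negative-definiteness, which is the substantive input already established in Theorem \ref{thm:negdef}.
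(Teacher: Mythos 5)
Your proof is correct and follows exactly the route the paper indicates: the paper justifies the lemma in one line as "the Cauchy--Schwarz inequality applied to the opposite of the intersection form", which is positive definite by Theorem \ref{thm:negdef}. Your treatment of the equality case via the linear independence of the distinct dual basis vectors $E^{\vee}_a$, $E^{\vee}_b$ is the right way to fill in the detail the paper leaves implicit.
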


Let now $D$ be a Weil divisor on $S$, that is, a formal sum of branches on $S$. 
If $D$ is principal, that is, the divisor $(f)$ of a meromorphic germ on $S$, then 
one may lift it to a resolution $S^{\pi}$ as the principal divisor $(\pi^* f)$. This divisor 
decomposes as the sum of an exceptional part $(\pi^* D)_{ex}$ supported by $E^{\pi}$ and 
the strict transform of $D$. The crucial property of the lift $(\pi^* f)$, already used in the 
proof of Theorem \ref{thm:negdef} (see relation (\ref{eq:intvanish})), is that its intersection 
numbers with all the components $E_u$ of $E^{\pi}$ vanish. In \cite[Sect. II (b)]{M 61}, 
Mumford imposed this property in order to define a lift $\pi^* D$ for \emph{any} 
Weil divisor $D$ on $S$:

\begin{definition}  \label{def:Mumftot}   \index{total transform of a divisor} 
    Let $D$ be a Weil divisor on $S$. Its {\bf total transform} $\boxed{\pi^* D}$ is the 
    unique sum $\boxed{(\pi^* D)_{ex}}  + \boxed{(\pi^*D)_{str}}$ such that:
        \begin{enumerate}
            \item  $(\pi^* D)_{ex} \in  \cE(\pi)_{\Q}$.  
            \item   $(\pi^* D)_{str}$ is the strict transform of $D$ by $\pi$. 
            \item   \label{point:zeroint}   
                  $(\pi^* D) \cdot E_u =0$ for all $u \in \cP(\pi)$. 
        \end{enumerate}
     The divisor $(\pi^* D)_{ex}$ supported by the exceptional divisor of $\pi$ 
     is the {\bf exceptional transform} of $D$ by $\pi$.   \index{exceptional transform of a divisor} 
\end{definition}

The divisor $\pi^*D$ is well-defined, as results from Theorem \ref{thm:negdef}. 
The point is to show that $(\pi^* D)_{ex}$ exists and is unique with the 
property (\ref{point:zeroint}). Write it as a sum $\sum_{v \in \cP(\pi)} x_v E_v$. The last condition of 
Definition \ref{def:Mumftot} may be written as the system: 
   \[  \sum_{v \in \cP(\pi)}  (E_v \cdot E_u) x_v = - (\pi^*D)_{str} \cdot E_u, \:   
          \mbox{ for all } \:   u \in \cP(\pi).\]
This is a square linear system in the unknowns $x_v$, whose matrix is the matrix of the 
intersection form in the basis $(E_u)_{u \in \cP(\pi)}$. As the intersection form is negative 
definite, it is non-degenerate, therefore this system has a unique solution. Moreover, 
all its coefficients being integers, its solution has rational coordinates, which shows that 
$(\pi^* D)_{ex} \in  \cE(\pi)_{\Q}$ . 

Using Definition \ref{def:Mumftot} and the standard definition of intersection numbers on smooth 
surfaces recalled in Section \ref{sec:intmult},  Mumford defined in the following way 
in \cite[Sect. II (b)]{M 61} 
the intersection number of two Weil divisors on $S$: 

\begin{definition}    \label{def:Mumfint}   \index{intersection number!Mumford's} 
      \index{Mumford's!intersection number}
    Let $A,B$ be two Weil divisors on $S$ without common components, 
    and $\pi$ be a resolution of $S$. 
    Then the {\bf intersection number} of $A$ and $B$ is defined by: 
           \[   \boxed{A \cdot B} := \pi^* A \cdot \pi^* B.  \]
\end{definition}

Using the fact that any two resolutions of $S$ are related by a sequence of blow ups and 
blow downs (see \cite[Thm. V.5.5]{H 77}), 
it may be shown that the previous notion is independent of the choice of 
resolution, similarly to that of bracket of two divisorial valuations introduced in 
Definition \ref{def:bracket}. In particular, if $S$ is smooth, one may choose $\pi$ 
to be the identity. This shows that in this case Mumford's definition gives the same 
intersection number as the standard Definition \ref{def:intnumb}.

\begin{example} \label{ex:intnumb}
    Let $S$ be the germ at the origin $0$ of the quadratic cone $Z(x^2 + y^2 + z^2) \hookrightarrow \C^3$ 
    (it is the so-called $\mathbf{A_1}$ surface singularity). Let $A$ and $B$ be the germs at $0$ of two 
    distinct generating lines of the cone. One may resolve $S$ by blowing up $0$. This 
    morphism $\pi : S^{\pi} \to S$ separates all the generators, therefore it is an embedded resolution 
    of $\{A, B\}$. The exceptional divisor of $\pi$ is 
    the projectivisation of the cone, that is, it is a smooth rational curve $E$. Its self-intersection 
    number is the opposite of the degree of the curve seen embedded in the projectivisation 
    of the ambient space $\C^3$. Therefore, $E^2 = -2$. Let us compute the total transform 
    $\pi^* A = (\pi^* A)_{str} + x E$. The imposed constraint $\pi^* A  \cdot E =0$ becomes 
    $1 - 2x =0$, therefore $x = 1/2$. We have used the fact that the strict transform 
    $(\pi^* A)_{str}$ of $A$ by $\pi$ is smooth and transversal to $E$, which implies that 
    $(\pi^* A)_{str} \cdot E = 1$. 
    
    We obtained $\pi^* A = (\pi^* A)_{str} + (1/2) E$ and similarly, 
     $\pi^* B = (\pi^* B)_{str} + (1/2) E$. Using Definition \ref{def:Mumfint}, we get:
       \[  \begin{array}{ll}
                A \cdot B &  = \pi^* A \cdot \pi^* B = \\ 
                  &    =  \left(  (\pi^* A)_{str} + (1/2) E \right) \cdot   \left(  (\pi^* B)_{str} + (1/2) E \right) = \\
                  &    = (\pi^* A)_{str} \cdot (\pi^* B)_{str}  + (1/2) ( (\pi^* A)_{str} + (\pi^* B)_{str} ) \cdot E + 
                                      (1/2)^2 E^2 = \\
                   &   = 0 + (1/2) \cdot 2 +  (1/2)^2 \cdot (-2) =  \\
                   &    = 1/2. 
            \end{array}  \]
            
     This example shows in particular that \emph{the intersection number of two curve singularities 
     depends on the normal surface singularity on which it is computed}. Indeed, 
     the branches $A$ and  $B$ are also contained in a smooth surface (any two generators 
     of the quadratic cone are obtained as the intersection of the cone with a plane passing 
     through its vertex). In such a surface, their intersection number is $1$ instead of $1/2$. 
\end{example}

Definition \ref{def:Mumfint} allows to give the following interpretation of the notion of bracket 
introduced in Definition \ref{def:bracket} (see \cite[Prop. 1.11]{GBGPPPR 19}):

\begin{proposition}    \label{prop:intbra}
  Let $A, B$ be two distinct branches on $S$. Consider an embedded resolution $\pi$ 
  of their sum. Denote by $E_a, E_b$ the components of the exceptional divisor 
  $E^{\pi}$ which are intersected by the strict transforms $(\pi^*A)_{str}$ and 
  $(\pi^*B)_{str}$ respectively. Then:
     \[   A \cdot B = \langle a,b \rangle. \]
\end{proposition}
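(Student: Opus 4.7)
The plan is to unwind Mumford's definition and identify the exceptional transform of a single branch with the negative of the corresponding dual-basis element, after which $A \cdot B$ reduces directly to $\langle a,b\rangle$. Let me outline the steps.

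First I would establish the intersection-theoretic behavior of the strict transforms of $A$ and $B$ on $S^\pi$. Since $\pi$ is an embedded resolution of $A+B$, the strict transform of each branch is smooth and meets the exceptional divisor $E^\pi$ transversally at a single smooth point of a single component, so
\[ (\pi^*A)_{str}\cdot E_u = \delta_{au} \quad\text{and}\quad (\pi^*B)_{str}\cdot E_u = \delta_{bu} \qquad (u\in\cP(\pi)). \]
Next, since $A$ and $B$ meet only at $s$ as curves on $S$, their strict transforms can only meet on $E^\pi$; the normal-crossings condition on $\pi^{-1}(A+B)$ forbids three components to pass through a common point, so $(\pi^*A)_{str}$ and $(\pi^*B)_{str}$ cannot share such a point either. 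Hence $(\pi^*A)_{str}\cdot(\pi^*B)_{str}=0$.

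Now I would identify the exceptional transforms. Property (\ref{point:zeroint}) of Definition \ref{def:Mumftot} combined with the computation above gives $(\pi^*A)_{ex}\cdot E_u = -\delta_{au}$ for every $u$. Comparing with the defining relations (\ref{eq:expbasis}) of the dual basis and using that Theorem \ref{thm:negdef} makes the intersection form non-degenerate, we conclude $(\pi^*A)_{ex}=-E^\vee_a$, and analogously $(\pi^*B)_{ex}=-E^\vee_b$. Using Definition \ref{def:Mumfint} and the fact that both $\pi^*A$ and $\pi^*B$ are orthogonal (for the intersection form) to every divisor supported on $E^\pi$, the cross terms with $(\pi^*B)_{ex}$ and $(\pi^*A)_{ex}$ in the expansion of $\pi^*A\cdot\pi^*B$ vanish, leaving
\[ A\cdot B \;=\; (\pi^*A)_{str}\cdot(\pi^*B)_{str} + (\pi^*A)_{ex}\cdot(\pi^*B)_{str} \;=\; -E^\vee_a\cdot(\pi^*B)_{str}. \]
Writing $E^\vee_a=\sum_v c_v E_v$ in the basis $(E_v)$ and using $(\pi^*B)_{str}\cdot E_v=\delta_{bv}$, the right-hand side equals $-c_b$; the same expansion together with (\ref{eq:expbasis}) gives $E^\vee_a\cdot E^\vee_b=c_b$. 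Combining these yields $A\cdot B=-E^\vee_a\cdot E^\vee_b=\langle a,b\rangle$.

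The main obstacle is the disjointness statement $(\pi^*A)_{str}\cdot(\pi^*B)_{str}=0$, which is the only place where geometric, rather than purely linear-algebraic, input enters; once it is in place the rest is a formal manipulation of the defining properties of $\pi^*$ and of the dual basis. The argument given above relies jointly on $A$ and $B$ being distinct branches of the germ (so that they meet only at the singular point, which is blown up by $\pi$) and on the normal-crossings clause in Definition \ref{def:embres}, which prevents the two strict transforms from sharing a point on any $E_u$.
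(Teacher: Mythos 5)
Your proposal is correct and follows essentially the same route as the paper's proof: both arguments rest on the disjointness of the strict transforms, the orthogonality of $\pi^*A$ and $\pi^*B$ to all divisors supported on $E^{\pi}$, and the identification $(\pi^*A)_{ex}=-E^{\vee}_a$ coming from $(\pi^*A)_{str}\cdot E_u=\delta_{au}$. The only difference is cosmetic bookkeeping at the end (you expand $E^{\vee}_a$ in the basis $(E_v)$, whereas the paper substitutes $(\pi^*B)_{str}=\pi^*B-(\pi^*B)_{ex}$), and you spell out the normal-crossings justification of the two geometric inputs in slightly more detail than the paper does.
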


\begin{proof}  This proof uses  directly Definition \ref{def:Mumftot}. 

     As $\pi$ is an embedded resolution of $A + B$, the strict transforms 
     $(\pi^*A)_{str}$ and $(\pi^*B)_{str}$ are disjoint. Therefore 
     $(\pi^*A)_{str} \cdot (\pi^*B)_{str}=0$. Using the last condition in the Definition 
     \ref{def:Mumftot} of the total transform of a divisor, we know that 
     $(\pi^*A) \cdot (\pi^* B)_{ex} = (\pi^*A)_{ex} \cdot (\pi^* B) =0$. 
     Combining both equalities, we deduce that:
        \[ \begin{array}{ll}
             A \cdot B &  = (\pi^* A) \cdot (\pi^* B) = \\
                             &  = (\pi^* A) \cdot ( (\pi^* B)_{ex} + (\pi^* B)_{str})  =  \\
                             &  = (\pi^* A) \cdot  (\pi^* B)_{str} =  \\
                             &   = ( (\pi^* A)_{ex} + (\pi^* A)_{str}) \cdot  (\pi^* B)_{str} =  \\
                             &   = (\pi^* A)_{ex}  \cdot  (\pi^* B)_{str} =  \\
                             &  =  (\pi^* A)_{ex}  \cdot  ( \pi^* B -  (\pi^* B)_{ex} ) =  \\
                             & =  -  (\pi^* A)_{ex}  \cdot   (\pi^* B)_{ex}  =  \\
                             &  = -  (-E_a^{\vee}) \cdot  (-E_b^{\vee}) = \\
                             & = \langle a,b \rangle. 
           \end{array} \]
      At the end of the computation we have used the equality $(\pi^* A)_{ex} = - E_a^{\vee}$, 
      which results from the fact that $\pi$ is an embedded resolution of $A$. Indeed, this 
      implies that $((\pi^* A)_{str}  + E_a^{\vee} ) \cdot E_u =0$ for every $u \in \cP(\pi)$, 
      which shows that one has indeed the stated formula for $(\pi^* A)_{ex}$. 
\end{proof}

\section{A reformulation of the ultrametric inequality}
\label{sec:refineq}
\medskip

In this section we explain the notion of \emph{angular distance} on the set of vertices 
of the dual graph of a good resolution of $S$. Theorem \ref{thm:isdist} states 
a crucial property of this distance, relating it to the 
cut-vertices of the dual graph. Then the ultrametric inequality is reexpressed in 
terms of the angular distance. This allows to show that Theorem \ref{thm:ultrambv}  
is a consequence of Theorem \ref{thm:reformangdist}, 
which is formulated only in terms of the angular distance. 
\medskip

Let $\pi : S^{\pi} \to S$ be a good resolution of the normal surface singularity $S$. 
Recall that $\cP(\pi)$ denotes the set of irreducible components of its exceptional 
divisor $E^{\pi}$. Using the notion of bracket from Definition \ref{def:bracket}, one 
may define (see \cite[Sect. 2.7]{GR 17} and  \cite[Sect. 1.2]{GBGPPPR 19}):

\begin{definition}   \label{def:rho}   \index{angular distance} \index{distance!angular}
    The {\bf angular distance} is the function $\rho : \cP(\pi) \times \cP(\pi) \to [0, \infty)$ 
    given by:
       \[  \boxed{\rho(a,b)}   := \left\{ \begin{array}{cl}
                                               - \log \dfrac{\langle a,b\rangle^2}{\langle a, a \rangle \langle b,b\rangle} &  
                                                  \mbox{ if }  a \neq b, \\
                                              0    & \mbox{ if }  a = b. 
                                           \end{array}  \right.\]
\end{definition}

The fact that the function $\rho$ takes values in the interval $[0, \infty)$ is a consequence of 
Lemma \ref{lem:CS}. The attribute ``angular'' was chosen by Gignac and Ruggiero 
because their definition in \cite[Sect. 2.7]{GR 17} 
was more general, applying to any pair of real-valued semivaluations 
of the local ring $\cO_{S, s}$, and that it depended only on those valuations up to homothety, 
similarly to the angle of two vectors.  It is a distance by the following theorem of 
Gignac and Ruggiero \cite[Prop. 1.10]{GR 17} (recall that the notion of vertex separating two other 
vertices was introduced in Definition \ref{def:cut}):

\begin{theorem}  \label{thm:isdist}   
      The function $\rho$ is a distance on the set $\cP(\pi)$. Moreover, for every 
      $a,b,c \in \cP(\pi)$, the following properties are equivalent:
         \begin{itemize} 
             \item one has the equality 
                 $\rho(a,b) + \rho(b,c) = \rho(a,c)$;
             \item  $b$ separates $a$ and $c$ 
                      in the dual graph $\Gamma(\pi)$. 
         \end{itemize}
\end{theorem}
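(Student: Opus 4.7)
The plan is to handle the metric axioms first, then reduce the triangle inequality (together with the separation criterion) to a single multiplicative inequality between brackets, and finally prove that inequality by a discrete potential-theoretic argument on the dual graph $\Gamma(\pi)$.

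Symmetry of $\rho$ is clear and non-negativity, with $\rho(a,b)=0 \iff a=b$, follows immediately from Lemma \ref{lem:CS}. Taking exponentials of $-\rho(a,c)$ and of $-\rho(a,b)-\rho(b,c)$ and using that all brackets are strictly positive, a routine manipulation shows that
\[
 \rho(a,c) \leq \rho(a,b) + \rho(b,c) \quad \Longleftrightarrow \quad \langle a,b\rangle \langle b,c\rangle \leq \langle a,c\rangle \langle b,b\rangle,
\]
with equality on one side if and only if there is equality on the other. Thus both statements of the theorem reduce to showing the multiplicative inequality, with equality exactly when $b$ separates $a$ from $c$ in $\Gamma(\pi)$.

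To prove it, I would interpret the brackets as coefficients of canonical effective $\Q$-divisors. Setting $D_a := -E_a^{\vee}$, the identity $E_a^{\vee}\cdot E_v = \delta_{av}$ gives two key facts: the coefficient of $E_v$ in $D_a$ equals $\langle a,v\rangle$, and for every $v\neq a$ one has $D_a \cdot E_v = 0$, a discrete Laplace-type relation on $\Gamma(\pi)$. Form the auxiliary divisor $F := \langle b,b\rangle\, D_a - \langle a,b\rangle\, D_b$, whose coefficient function is $h(v) := \langle b,b\rangle \langle a,v\rangle - \langle a,b\rangle \langle b,v\rangle$. Then $F\cdot E_v = 0$ for every $v\neq a,b$, and $h(b)=0$. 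Crucially, the coefficient of $E_c$ in $F$ is exactly $\langle a,c\rangle \langle b,b\rangle - \langle a,b\rangle \langle b,c\rangle$, the quantity whose sign we need to control.

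Now split into two cases. First, suppose $b$ separates $a$ from $c$, and let $\Gamma_c$ be the connected component of $\Gamma(\pi) \setminus \{b\}$ containing $c$. Every neighbour in $\Gamma(\pi)$ of a vertex of $\Gamma_c$ lies in $\Gamma_c \cup \{b\}$, so $h$ restricted to $\Gamma_c \cup \{b\}$ solves a homogeneous Dirichlet problem: the harmonic relation at each vertex of $\Gamma_c$ with boundary value $h(b)=0$. The intersection form restricted to $\Gamma_c$ is negative definite, being a principal submatrix of a negative definite one (Theorem \ref{thm:negdef}), so this problem has a unique solution, which must be identically zero; hence $h(c)=0$. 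Suppose instead that $b$ does not separate $a$ from $c$: then $a$ and $c$ lie in the same component $\Gamma_a$ of $\Gamma(\pi) \setminus \{b\}$, and $h|_{\Gamma_a}$ is the unique solution of the analogous Dirichlet problem with a single source of weight $-\langle b,b\rangle$ at the vertex $a$. Writing $M'$ for the intersection form restricted to $\Gamma_a$, we get $h|_{\Gamma_a} = -\langle b,b\rangle\, (M')^{-1}\delta_a$. The matrix $-M'$ is symmetric positive definite with non-positive off-diagonal entries $-E_u\cdot E_v$, and is irreducible because $\Gamma_a$ is connected by construction; so $-M'$ is an irreducible Stieltjes $M$-matrix, and by the classical Perron-type theorem for such matrices, $(-M')^{-1}$ has strictly positive entries. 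Therefore $h(c)>0$, completing the strict inequality.

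The expected obstacle is this last positivity step: the passage from negative-definiteness of the global intersection form to strict positivity of $(-M')^{-1}$ on an irreducible principal subgraph. The rest is essentially bookkeeping with brackets, the harmonicity of the dual basis $E_u^{\vee}$, and the algebraic reduction of the logarithmic triangle inequality to its multiplicative form.
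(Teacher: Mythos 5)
Your argument is correct. Note first that the paper does not actually prove this statement: it attributes it to Gignac and Ruggiero \cite{GR 17} and merely observes that it is a reformulation of Theorem \ref{thm:crucineq}, which is in turn only cited. Your opening reduction --- exponentiating the logarithmic triangle inequality to obtain the equivalence with $\langle a,b\rangle\langle b,c\rangle\le\langle b,b\rangle\langle a,c\rangle$, with matching equality cases --- is exactly the reformulation the paper has in mind, and your treatment of symmetry, positivity and the separation axiom via Lemma \ref{lem:CS} is fine. What you add is a genuine proof of the crucial inequality. The divisor $F=\langle b,b\rangle D_a-\langle a,b\rangle D_b$ with coefficient function $h(v)=\langle b,b\rangle\langle a,v\rangle-\langle a,b\rangle\langle b,v\rangle$ does satisfy $F\cdot E_v=0$ for $v\ne a,b$ and $h(b)=0$, so restricting to the relevant component of $\Gamma(\pi)\setminus\{b\}$ gives either a homogeneous system (when $b$ separates $a$ from $c$, forcing $h(c)=0$ because a principal submatrix of a negative definite matrix is negative definite) or a system with a single source at $a$ (when it does not, forcing $h(c)>0$ by entrywise positivity of the inverse of an irreducible Stieltjes matrix). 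Both linear-algebra inputs are standard and correctly invoked, and this is essentially the mechanism behind the proofs in \cite{GR 17} and \cite[Prop. 1.18]{GBGPPPR 19}. Two small points are worth making explicit: the strict positivity of all brackets, which you need for the logarithms to be finite, is itself the output of your final Perron-type argument applied to the whole connected graph $\Gamma(\pi)$ rather than to a component; and the degenerate cases $b\in\{a,c\}$ and $a=c$ allowed by Definition \ref{def:cut} should be checked directly ($h(c)=0$ trivially in the first, $h(a)>0$ by strict Cauchy--Schwarz in the second), though your component argument in fact already covers $a=c$.
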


This theorem explains the importance of cut-vertices of the dual graph $\Gamma(\pi)$ for 
understanding the angular distance. 

Theorem \ref{thm:isdist}  is a reformulation of the following theorem, which was first proved by 
in \cite[Prop. 79, Rem. 81]{GBGPPP 18} for arborescent 
singularities, then in \cite[Prop. 1.10]{GR 17} for arbitrary normal surface singularities 
(see also \cite[Prop. 1.18]{GBGPPPR 19} for a slightly 
different proof):

\begin{theorem}  \label{thm:crucineq}   
      Let $a,b,c \in \cP(\pi)$. Then:
        \[ \langle a,b\rangle \langle b,c\rangle \leq \langle b,b\rangle \langle a,c\rangle, \]
     with equality if and only if $b$ separates $a$ and $c$ 
      in the dual graph $\Gamma(\pi)$. 
\end{theorem}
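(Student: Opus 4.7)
The plan is to reformulate the inequality via an orthogonal decomposition of $\cE(\pi)_{\R}$ adapted to $b$, and then to combine the negative definiteness of the intersection form with the effectivity trick already used in the proof of Theorem \ref{thm:negdef}. First, I would split $\cE(\pi)_{\R} = V_{\hat b} \oplus \R E_b^{\vee}$, where $V_{\hat b}$ is the subspace spanned by $\{E_u : u \neq b\}$. This splitting is orthogonal for the intersection product, since $E_b^{\vee} \cdot E_u = \delta_{bu} = 0$ for $u \neq b$. Writing $E_a^{\vee} = P_a + \lambda_a E_b^{\vee}$ and $E_c^{\vee} = P_c + \lambda_c E_b^{\vee}$ with $P_a, P_c \in V_{\hat b}$, pairing with $E_b^{\vee}$ forces $\lambda_a = \langle a,b\rangle / \langle b,b\rangle$ and $\lambda_c = \langle b,c\rangle / \langle b,b\rangle$. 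Expanding $E_a^{\vee} \cdot E_c^{\vee}$ and using $(E_b^{\vee})^2 = -\langle b,b\rangle$ produces the identity
\[
\langle a,c\rangle - \frac{\langle a,b\rangle\,\langle b,c\rangle}{\langle b,b\rangle} = -\,P_a \cdot P_c,
\]
so the theorem reduces to showing $P_a \cdot P_c \leq 0$, with equality if and only if $b$ separates $a$ from $c$.

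Next, I would identify $P_a$ geometrically: the equation $P_a \cdot E_u = \delta_{au}$ for every $u \neq b$ shows that $P_a$ is the vector dual to $E_a$ inside $V_{\hat b}$ equipped with the restricted intersection form, which remains negative definite by Sylvester. Now the graph $\Gamma(\pi) \setminus \{b\}$ breaks into connected components $\Gamma_1, \dots, \Gamma_k$; for $u \in \Gamma_i$, $v \in \Gamma_j$ with $i \neq j$, the vertices $u$ and $v$ cannot be adjacent in $\Gamma(\pi)$ (otherwise removing $b$ would keep them in the same component), so $E_u \cdot E_v = 0$. Hence $\cdot$ restricts to a block-diagonal form on $V_{\hat b} = \bigoplus_i V_i$, and each $P_a$ lies entirely in the component $V_{i_a}$ containing $a$. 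If $b \in \{a,c\}$ or $i_a \neq i_c$, orthogonality gives $P_a \cdot P_c = 0$, producing the equality case exactly when $b$ separates $a$ from $c$.

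It then remains to treat the case when $a, c \neq b$ belong to the same component $\Gamma_{i_a} = \Gamma_{i_c}$. Here I would re-run the argument from the proof of Theorem \ref{thm:negdef} inside $V_{i_a}$: the divisor $-P_a$ satisfies $(-P_a) \cdot E_u = -\delta_{au} \leq 0$ for every $u \in \Gamma_{i_a}$, so splitting $-P_a = D^+ - D^-$ and using negative definiteness of $\cdot$ on $V_{i_a}$ forces $D^- = 0$, i.e.\ $-P_a$ is effective. Connectedness of $\Gamma_{i_a}$ then upgrades non-negativity to strict positivity: any zero coefficient adjacent to a positive one would make some $(-P_a) \cdot E_v$ strictly positive, contradicting $(-P_a) \cdot E_v \in \{0, -1\}$. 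Writing $-P_c = \sum_{v \in \Gamma_{i_a}} m_v^{c} E_v$ with $m_v^{c} > 0$, one finally obtains
\[
P_a \cdot P_c \;=\; (-P_a) \cdot (-P_c) \;=\; \sum_{v \in \Gamma_{i_a}} m_v^{c} \,\bigl((-P_a) \cdot E_v\bigr) \;=\; -m_a^{c} \;<\; 0,
\]
which is the desired strict inequality.

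The main obstacle is this last strict positivity step: it is what converts $\leq 0$ into $< 0$ when $a$ and $c$ sit in the same component of $\Gamma(\pi) \setminus \{b\}$, and hence what guarantees that equality in the bracket inequality genuinely detects the combinatorial separation of $a$ from $c$ by $b$. The remaining ingredients are essentially bookkeeping inside the orthogonal splitting $\cE(\pi)_{\R} = V_{\hat b} \oplus \R E_b^{\vee}$, together with the preservation of negative definiteness under restriction to $V_{\hat b}$ and to each $V_i$.
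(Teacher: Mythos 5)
Your argument is correct, and it is worth noting that the paper itself does not prove Theorem \ref{thm:crucineq}: it only states it and refers to \cite{GBGPPP 18}, \cite{GR 17} and \cite{GBGPPPR 19} for proofs. So your proposal supplies a genuine, self-contained proof using only ingredients available in the text, namely Theorem \ref{thm:negdef} and the non-negativity of $E_u \cdot E_v$ for $u \neq v$. The reduction to $P_a \cdot P_c \leq 0$ via the orthogonal splitting $\cE(\pi)_{\R} = V_{\hat b} \oplus \R E_b^{\vee}$ is a clean Schur-complement computation, and I checked the coefficients: pairing $E_a^{\vee} = P_a + \lambda_a E_b^{\vee}$ with $E_b^{\vee}$ does give $\lambda_a = \langle a,b\rangle/\langle b,b\rangle$ (division is legitimate since $\langle b,b\rangle >0$ by negative definiteness), and the identity $\langle a,c\rangle - \langle a,b\rangle\langle b,c\rangle/\langle b,b\rangle = -P_a\cdot P_c$ follows. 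The identification $P_a \cdot E_u = \delta_{au}$ for $u \neq b$, the block-diagonality of the form on $V_{\hat b}$ over the components of $\Gamma(\pi)\setminus\{b\}$, and the conclusion $P_a \in V_{i_a}$ (which uses nondegeneracy of each block; you might make that half-line explicit) all hold, and they settle the equality case, including the degenerate convention $b \in \{a,c\}$, where $P_b = 0$. The strict case is also sound: the decomposition $-P_a = D^+ - D^-$ with $(D^-)^2 = D^-\cdot D^+ - D^-\cdot(-P_a) \geq 0$ forces $D^-=0$, and the connectedness argument excluding a zero coefficient adjacent to a positive one is the standard Zariski-type upgrade to strict effectivity, giving $P_a\cdot P_c = -m_a^c < 0$. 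This is essentially the approach of \cite[Prop.\ 1.18]{GBGPPPR 19} (projection away from $E_b^{\vee}$ plus effectivity of the duals), as opposed to the potential-theoretic route of \cite{GR 17}; what your version buys is that everything is elementary linear algebra on the intersection lattice, reusing verbatim the effectivity trick from the proof of Theorem \ref{thm:negdef}.
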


Theorem \ref{thm:crucineq} may be also reformulated in terms of spherical geometry 
using the spherical Pythagorean theorem (see \cite[Prop. 1.19.III]{GBGPPPR 19}).

Using Proposition \ref{prop:intbra} and Definition \ref{def:rho}  of the angular distance, one 
may reformulate in the following way the ultrametric inequality for the restriction 
of the function $u_L$ to a set of three branches:

\begin{proposition}  \label{prop:reform}
    Let $L, A, B, C$ be pairwise distinct branches on $S$. Consider an embedded resolution 
    of their sum and let $E_l, E_a, E_b, E_c$ the irreducible components of 
    its exceptional divisor which intersect the strict transforms of $L, A, B$ 
    and $C$ respectively. Then the following (in)equalities are equivalent: 
        \begin{enumerate}
            \item  $u_L(A, B) \leq \max \{ u_L(A,C), u_L(B,C)\}$. 
            \item \label{point:symint} 
                 $(A \cdot B) \cdot (L \cdot C) \geq \min \{ (A \cdot C) (L \cdot B) , 
                  (B \cdot C) (L \cdot A) \}.$
            \item  $\langle a, b \rangle  \langle l, c \rangle \geq 
                \min \{ \langle a, c \rangle \langle l, b \rangle,  \:   \langle b, c \rangle  \langle l, a \rangle \}.$
            \item $\rho(a,b) + \rho(l, c) \leq \max \{  \rho(a,c) + \rho(l, b),  \rho(b,c) + \rho(l, a) \}. $
        \end{enumerate}
\end{proposition}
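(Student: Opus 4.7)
The plan is to prove the proposition as a chain of purely formal equivalences, since each successive reformulation amounts to an invertible algebraic manipulation (taking reciprocals, applying Proposition \ref{prop:intbra}, and taking logarithms).

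First I would establish (1)$\Leftrightarrow$(2). Writing out the definition of $u_L$ from (\ref{eq:ultramrelbranch}), the inequality in (1) becomes
\[
\frac{(L\cdot A)(L\cdot B)}{A\cdot B} \leq \max\!\left\{\frac{(L\cdot A)(L\cdot C)}{A\cdot C},\ \frac{(L\cdot B)(L\cdot C)}{B\cdot C}\right\}.
\]
All quantities are positive because the four branches are pairwise distinct, so taking reciprocals of the three fractions reverses the inequality and turns $\max$ into $\min$:
\[
\frac{A\cdot B}{(L\cdot A)(L\cdot B)} \geq \min\!\left\{\frac{A\cdot C}{(L\cdot A)(L\cdot C)},\ \frac{B\cdot C}{(L\cdot B)(L\cdot C)}\right\}.
\]
Clearing the common positive denominator $(L\cdot A)(L\cdot B)(L\cdot C)$ yields exactly (2).

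Next, the equivalence (2)$\Leftrightarrow$(3) is immediate from Proposition \ref{prop:intbra}: under the embedded resolution of $L+A+B+C$, each intersection number is identified with the corresponding bracket, i.e., $A\cdot B = \langle a,b\rangle$, $L\cdot C = \langle l,c\rangle$, and so on. Substituting turns (2) into (3) term by term.

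Finally, for (3)$\Leftrightarrow$(4) I would take logarithms after multiplying through by the positive quantity $\langle a,a\rangle\langle b,b\rangle\langle l,l\rangle\langle c,c\rangle$. Using Definition \ref{def:rho}, one has
\[
\rho(a,b)+\rho(l,c) = -\log\frac{\langle a,b\rangle^{2}\langle l,c\rangle^{2}}{\langle a,a\rangle\langle b,b\rangle\langle l,l\rangle\langle c,c\rangle},
\]
and the two analogous sums on the right-hand side of (4) have the same denominator inside the logarithm. Hence (4) is equivalent to
\[
\langle a,b\rangle^{2}\langle l,c\rangle^{2} \geq \min\{\langle a,c\rangle^{2}\langle l,b\rangle^{2},\ \langle b,c\rangle^{2}\langle l,a\rangle^{2}\},
\]
where the reversal comes from the minus sign in $-\log$ and $\max$ becomes $\min$. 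Since brackets are non-negative by Lemma \ref{lem:CS}, taking square roots preserves the inequality and returns (3).

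There is no genuine obstacle here; the only point requiring care is the bookkeeping of inequality directions when reciprocating or negating, and the observation that the common normalizing factor $\langle a,a\rangle\langle b,b\rangle\langle l,l\rangle\langle c,c\rangle$ drops out cleanly in the logarithmic reformulation. The real content of the proposition lies not in this proof but in having identified (4) as the right reformulation to feed into the graph-theoretic Theorem \ref{thm:reformangdist}, via the interpretation of $\rho$ given by Theorem \ref{thm:isdist}.
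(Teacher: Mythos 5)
Your proof is correct and follows exactly the route the paper intends: the text leaves this proposition to the reader, indicating precisely the ingredients you use, namely the definition of $u_L$, Proposition \ref{prop:intbra}, and Definition \ref{def:rho} of the angular distance. The bookkeeping of reciprocals, the substitution of brackets for intersection numbers, and the logarithmic manipulation with the common normalizing factor are all handled correctly.
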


We leave the easy proof of this proposition to the reader. It uses the definitions of the 
function $u_L$, of the angular distance, as well as Proposition \ref{prop:intbra}. 
Note that excepted the first one, all 
the inequalities are symmetric in the four branches $L, A, B, C$. The fourth one is a 
well-known condition in combinatorics, whose name was introduced by Bunemann 
in his 1974 paper \cite{B 74}:
 
\begin{definition}   \label{def:4pt}     \index{four points condition}
     Let $(X, \delta)$ be a finite metric space. One says that it satisfies the {\bf four points condition} 
     if whenever $a,b,c,d \in X$, one has the following inequality:
      \[   \delta(a,b) + \delta(c,d) \leq \max \{ \delta(a,c) + \delta(b,d) , \: 
               \delta(a,d) + \delta(b,c) \}  .  \]
\end{definition}

In the same way in which a finite ultrametric may be thought as a special kind of decorated  
\emph{rooted} tree (see Proposition \ref{prop:equivultramtree}), 
a finite metric space satisfying the four points condition may be thought 
as a special kind of decorated \emph{unrooted} tree (see \cite{BD 98}):

\begin{proposition}  \label{prop:satisf4pt}
     The metric space $(X, \delta)$ satisfies the four points condition if and only if $\delta$ 
     is induced by a length function on a tree containing the set $X$ among its set of vertices. 
     If, moreover, one constrains $X$ to contain all the vertices of the tree of valency $1$ or $2$, 
     then this tree is unique up to a unique isomorphism fixing $X$. 
\end{proposition}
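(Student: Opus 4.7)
\emph{Forward direction.} Suppose $\delta$ is induced by a positive length function on a tree $T$ containing $X$ among its vertices. Given four points $a,b,c,d \in X$, consider the minimal subtree $T_{abcd} \subseteq T$ they span. Any tree spanned by four specified leaves has at most one internal edge separating the four into two pairs, so up to relabeling $T_{abcd}$ has the quartet topology $ab\mid cd$: there is a possibly degenerate interior segment of length $\ell \geq 0$ separating the pair $\{a,b\}$ from the pair $\{c,d\}$. A direct accounting of path lengths in $T_{abcd}$ then gives
\[
\delta(a,c) + \delta(b,d) \:=\: \delta(a,d) + \delta(b,c) \:=\: \delta(a,b) + \delta(c,d) + 2\ell,
\]
which is precisely the four points condition for $\{a,b,c,d\}$.

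\emph{Converse direction.} The plan is to reduce the statement to the rooted/ultrametric setting already treated in Proposition \ref{prop:equivultramtree}. Fix a base point $x_0 \in X$ and consider the Gromov product
\[
(y\mid z)_{x_0} \::=\: \tfrac{1}{2}\bigl(\delta(x_0,y) + \delta(x_0,z) - \delta(y,z)\bigr), \qquad y,z \in X \setminus \{x_0\}.
\]
Applying the four points condition to the quadruple $\{x_0, y, z, w\}$ and rearranging shows that among the three numbers $(y\mid z)_{x_0}$, $(y\mid w)_{x_0}$, $(z\mid w)_{x_0}$, the smallest is attained at least twice. Choosing a constant $R$ larger than all Gromov products, the function $d(y,z) := R - (y\mid z)_{x_0}$, extended by $d(y,y) := 0$, is therefore an ultrametric on $X \setminus \{x_0\}$. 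Proposition \ref{prop:equivultramtree} converts it into a hierarchical tree $\Theta$ with leaf set $X \setminus \{x_0\}$ equipped with a depth function $\lambda$. The tree $T$ realizing $\delta$ is then produced by attaching a pendant edge from $x_0$ to the root of $\Theta$, forgetting the rooted structure, and turning $\lambda$ into a genuine edge length function; verification that path lengths in $T$ recover $\delta$ reduces to unwinding the definition of the Gromov product along the unique arc between two leaves.

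\emph{Uniqueness.} Assume now that $X$ contains every vertex of $T$ of valency $1$ or $2$. Then the topological type of $T$ is forced by the pattern of quartet topologies $ab\mid cd$ versus $ac\mid bd$ versus $ad\mid bc$ identified in the forward direction, and these are invariants of $\delta$ since the three sums distinguish which one is smallest. Once the topology is fixed, each edge length is determined by a non-degenerate linear system expressing suitable $\delta$-values as sums of edge lengths along paths. No non-trivial tree automorphism can permute this rigid skeleton while fixing $X$ pointwise, yielding uniqueness up to a unique isomorphism.

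\emph{Expected main obstacle.} The most delicate step will be passing from the hierarchical tree $\Theta$ furnished by Proposition \ref{prop:equivultramtree} to an unrooted tree whose \emph{length function} realizes $\delta$ itself, not merely the Gromov products based at $x_0$. This requires redistributing $\lambda$ as honest edge lengths, carefully splitting at each internal vertex the contribution coming from the $\delta(x_0,\cdot)$ terms, and checking that no edge receives a negative length --- the latter being ensured precisely by the ordinary triangle inequality for $\delta$.
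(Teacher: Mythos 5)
Your proposal is correct in outline, but it takes a genuinely different route from the one the paper indicates. The paper does not prove Proposition \ref{prop:satisf4pt} at all: it refers to \cite{BD 98} (and implicitly to Buneman \cite{B 74}) and only records the ``basic idea'', namely that an $X$-tree is reconstructed directly from the shapes of the convex hulls of quadruples of points of $X$, these shapes being read off from the cases of equality in the triangle inequalities and the four points conditions --- exactly the mechanism you use for the uniqueness part and which the paper illustrates in Example \ref{ex:graphthm}. For the existence part you instead perform the Gromov-product (Farris) transform based at $x_0$, observe that the four points condition for quadruples through $x_0$ turns into the isosceles condition for $d = R - (\,\cdot\mid\cdot\,)_{x_0}$, and then invoke Proposition \ref{prop:equivultramtree} to get a rooted hierarchical tree that you subsequently unroot and re-metrize. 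This is an attractive choice in the context of these notes, since it recycles the ultrametric--tree dictionary of Section \ref{sec:ultramtrees} instead of redoing a quartet-by-quartet amalgamation; what it costs is precisely the step you flag as the ``main obstacle'': the depth function $\lambda$ only encodes the Gromov products, and one must convert it into genuine edge lengths. Your resolution is the right one --- place each internal vertex $v$ at height $(y\mid z)_{x_0}$ for any leaves $y,z$ with $y\wedge v\, z=v$ (note this is $R-\lambda(v)$, hence increasing away from the root), place each leaf $y$ at height $\delta(x_0,y)$, and check nonnegativity of the pendant edge lengths via the triangle inequality $(y\mid z)_{x_0}\le\delta(x_0,y)$.

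Two small points to tighten. First, ``attaching a pendant edge from $x_0$ to the root of $\Theta$'' is slightly off: the formal root of $\Theta$ carries $\lambda=\infty$, so you should rather identify $x_0$ with the root, or equivalently attach $x_0$ to the vertex representing the whole set $X\setminus\{x_0\}$ by an edge of length $\min_{y,z}(y\mid z)_{x_0}\ge 0$, contracting it if this minimum is zero (the proposition only asks that $X$ be among the vertices, not among the leaves, so such contractions are harmless). Second, in the forward direction the phrase ``at most one internal edge'' should be understood as allowing the degenerate shapes of Figure \ref{fig:Possibshape} (the $X$-, $Y$-, $F$- and $I$-shapes), where $\ell=0$ or some of the four points lie on the path joining the others; your displayed identity remains valid in all these cases, but it is worth saying so explicitly since the equality cases are exactly what drives the uniqueness argument.
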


Let us introduce supplementary vocabulary in order to deal with the special trees appearing 
in Proposition \ref{prop:satisf4pt}:

\begin{definition}  \label{def:treehull}  \index{tree} 
     Let $X$ be a finite set. An {\bf $X$-tree} is a tree whose set of vertices 
     contains the set $X$ and such that each vertex of valency at most $2$ belongs to $X$.      
    If $(X, \delta)$ is a finite metric space which satisfies the four points condition, 
    then the unique $X$-tree  characterized in Proposition \ref{prop:satisf4pt} 
    is called the {\bf tree hull} of  $(X, \delta)$.   \index{tree!hull}
\end{definition}

The basic idea of the proof of Proposition \ref{prop:satisf4pt} is that an $X$-tree is 
characterized by the shapes of the convex hulls of the quadruples of points of $X$, 
and that those shapes are determined by the cases of equality in the $12$ triangle 
inequalities and the $3$ four points conditions associated to each quadruple. 
In Figure \ref{fig:Possibshape} are represented the five possible shapes. For instance, 
the $H$-shape is the generic one, characterized by the fact that one has no equality 
in the previous inequalities.

 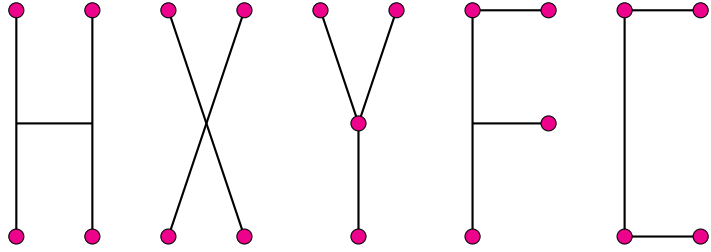
\begin{figure}[h!]
    \begin{center}
\begin{tikzpicture}[scale=0.5]

\begin{scope}[shift={(0,0)}]          
    \draw [-, color=black, thick](0,0) -- (0,6);
     \draw [-, color=black, thick](2, 0) -- (2,6) ;
      \draw [-, color=black, thick](0, 3) -- (2,3) ;
      
      \node[draw,circle, inner sep=2pt,color=black, fill=magenta] at (0,0){};
     \node[draw,circle, inner sep=2pt,color=black, fill=magenta] at (0,6){};
     \node[draw,circle, inner sep=2pt,color=black, fill=magenta] at (2, 0){};
     \node[draw,circle, inner sep=2pt,color=black, fill=magenta] at (2, 6){};
 \end{scope}

 \begin{scope}[shift={(4,0)}]          
    \draw [-, color=black, thick](0,0) -- (2,6);
     \draw [-, color=black, thick](2, 0) -- (0,6) ;
      
      \node[draw,circle, inner sep=2pt,color=black, fill=magenta] at (0,0){};
     \node[draw,circle, inner sep=2pt,color=black, fill=magenta] at (0,6){};
     \node[draw,circle, inner sep=2pt,color=black, fill=magenta] at (2, 0){};
     \node[draw,circle, inner sep=2pt,color=black, fill=magenta] at (2, 6){};
 \end{scope}

 \begin{scope}[shift={(8,0)}]          
    \draw [-, color=black, thick](1,0) -- (1,3);
     \draw [-, color=black, thick](1, 3) -- (2,6) ;
      \draw [-, color=black, thick](1, 3) -- (0,6) ;
      
      \node[draw,circle, inner sep=2pt,color=black, fill=magenta] at (1,0){};
     \node[draw,circle, inner sep=2pt,color=black, fill=magenta] at (1,3){};
     \node[draw,circle, inner sep=2pt,color=black, fill=magenta] at (0, 6){};
     \node[draw,circle, inner sep=2pt,color=black, fill=magenta] at (2, 6){};
 \end{scope}

  \begin{scope}[shift={(12,0)}]          
    \draw [-, color=black, thick](0,0) -- (0,6) -- (2,6);
     \draw [-, color=black, thick](0, 3) -- (2,3) ;
      
      \node[draw,circle, inner sep=2pt,color=black, fill=magenta] at (0,6){};
     \node[draw,circle, inner sep=2pt,color=black, fill=magenta] at (2,6){};
     \node[draw,circle, inner sep=2pt,color=black, fill=magenta] at (0, 0){};
     \node[draw,circle, inner sep=2pt,color=black, fill=magenta] at (2, 3){};
 \end{scope}

  \begin{scope}[shift={(16,0)}]          
    \draw [-, color=black, thick] (2,0) -- (0,0) -- (0,6) -- (2,6);
      
      \node[draw,circle, inner sep=2pt,color=black, fill=magenta] at (0,6){};
     \node[draw,circle, inner sep=2pt,color=black, fill=magenta] at (2,6){};
     \node[draw,circle, inner sep=2pt,color=black, fill=magenta] at (0, 0){};
     \node[draw,circle, inner sep=2pt,color=black, fill=magenta] at (2, 0){};
 \end{scope}       
    
    \end{tikzpicture}
\end{center}
\caption{The possible shapes of an $X$-tree, when $X$ has four elements.} 
 \label{fig:Possibshape}
   \end{figure}

Let us come back to our normal surface singularity $S$. One has the following property 
(see \cite[Prop. 1.24]{GBGPPPR 19}):

\begin{proposition}   \label{prop:anybranchultram}
     Let $\cF$ be a finite set of branches on $S$. If $u_L$ is an ultrametric 
     on $\cF \: \setminus \: \{L \}$ for one branch $L$ in $\cF$, then the same 
     is true for any branch of $\cF$. 
\end{proposition}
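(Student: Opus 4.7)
The plan is to pass to the angular-distance reformulation of the ultrametric condition and reduce the problem to a statement about tree metrics.

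By Proposition \ref{prop:reform}(4), for pairwise distinct branches $L, A, B, C \in \cF$, the ultrametric inequality $u_L(A, B) \leq \max\{u_L(A, C), u_L(B, C)\}$ is equivalent to
\[ \rho(a, b) + \rho(l, c) \leq \max\{\rho(a, c) + \rho(l, b), \rho(b, c) + \rho(l, a)\}. \]
Applying this for every permutation of $(A, B, C)$, the ultrametric property of $u_L$ on $\cF \setminus \{L\}$ is equivalent to the four-points condition (Definition \ref{def:4pt}) of $\rho$ on every $4$-element subset of $\cF$ containing $L$. Since this condition is symmetric in its four arguments, the conclusion to prove, for a different distinguished branch $L'$, is the analogous statement on every $4$-element subset of $\cF$ containing $L'$. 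Hence it suffices to show that the hypothesis implies the four-points condition for $\rho$ on \emph{every} $4$-element subset of $\cF$, which by Proposition \ref{prop:satisf4pt} is equivalent to $(\cF, \rho)$ being a tree metric.

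To exhibit $(\cF, \rho)$ as a tree metric, I would build the tree hull from the hierarchical tree supplied by Proposition \ref{prop:equivultramtree} applied to the ultrametric $u_L$ on $\cF \setminus \{L\}$. Let $(\Theta, r, \lambda)$ denote this tree, so that $u_L(A, B) = \lambda(A \wedge_r B)$. Combining the definitions of $u_L$ and $\rho$ with Proposition \ref{prop:intbra} yields the identity
\[ u_L(A, B) = \langle l, l \rangle \exp\bigl(-(a|b)_l\bigr), \qquad (a|b)_l := \tfrac{1}{2}\bigl(\rho(l, a) + \rho(l, b) - \rho(a, b)\bigr), \]
so the Gromov product $(a|b)_l$ factors through the infimum $A \wedge_r B$. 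I would then extend $\Theta$ by attaching $L$ as a new leaf joined to $r$, and assign to every edge $\{v, w\}$ of $\Theta$ with $v \prec_r w$ the length $\log \lambda(v) - \log \lambda(w)$, and to the segment joining $L$ down to the unique child $v_0$ of $r$ the total length $\min_{A, B \in \cF \setminus \{L\}} (a|b)_l$. The strict monotonicity of $\lambda$ with respect to $\preceq_r$ and the non-negativity of Gromov products (triangle inequality for $\rho$) guarantee that all lengths are non-negative, and a short telescoping check using the Gromov product identity for tree metrics shows that the resulting tree distance coincides with $\rho$ on every pair of elements of $\cF$.

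Consequently $(\cF, \rho)$ is a tree metric, the four-points condition for $\rho$ holds on every $4$-element subset of $\cF$, including those containing $L'$, and reapplying the equivalence of Proposition \ref{prop:reform}(4) yields the ultrametric property of $u_{L'}$ on $\cF \setminus \{L'\}$ for every $L' \in \cF$. The main obstacle is the verification that the explicit edge-length prescription yields the correct pairwise distances: via the Gromov product identity this reduces to a direct algebraic computation, but one must pay attention to the non-negativity of lengths at the attachment of $L$ and to the behaviour of the depth function $\lambda$ at the root of $\Theta$.
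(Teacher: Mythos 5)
Your overall strategy is sound and it targets exactly the point the paper singles out as subtle: Proposition \ref{prop:reform} only yields the four points condition for quadruples of valuations containing $l$, and the whole content of the proposition is to propagate it to all quadruples. Your reduction to exhibiting $\rho$ as a tree metric on the set of valuations attached to $\cF$, and the identity $u_L(A,B)=\langle l,l\rangle e^{-(a|b)_l}$ (which is correct, as one checks from Definition \ref{def:rho} and Proposition \ref{prop:intbra}), are the right tools; the present paper does not carry out this argument at all, deferring to \cite[Prop. 1.24]{GBGPPPR 19}, so your route is a legitimate self-contained one.

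There is, however, a concrete error in the edge-length prescription, and it sits precisely where you do not flag a difficulty. The depth function $\lambda$ vanishes exactly on the leaves of $\Theta$, so for an edge $\{v,A\}$ with $A$ a leaf your formula $\log\lambda(v)-\log\lambda(A)$ equals $+\infty$; with that assignment every distance $d(L,A)$ and $d(A,B)$ in the constructed tree is infinite and the claimed coincidence with $\rho$ fails outright. The correct length of the leaf edge is $\rho(l,a)-\bigl(\log\langle l,l\rangle-\log\lambda(\mathrm{parent}(A))\bigr)=\rho(l,a)-\max_{B}(a|b)_l$. Note that this quantity is \emph{not} determined by the ultrametric $u_L$ (equivalently by $\lambda$) alone: it requires the extra data $\rho(l,a)$, i.e.\ the self-brackets, and its non-negativity is a separate application of the triangle inequality for $\rho$ (namely $\rho(l,b)\le\rho(l,a)+\rho(a,b)$), not the non-negativity of Gromov products that you invoke at the attachment of $L$. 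Once this is corrected, the telescoping computation does give $d(L,A)=\rho(l,a)$, hence $d(A,B)=\rho(l,a)+\rho(l,b)-2(a|b)_l=\rho(a,b)$, and the remainder of your argument goes through (modulo the routine proviso that the resolution be chosen fine enough for the map $A\mapsto a$ to be injective, as in Proposition \ref{prop:injrel}).
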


By Proposition \ref{prop:reform}, if $u_L$ is an ultrametric 
     on $\cF \: \setminus \: \{L \}$ for one branch $L$ in $\cF$, then 
     one has the symmetric relation (\ref{point:symint}) for every quadruple 
     of branches of $\cF$ \emph{containing $L$}. The subtle point of the proof 
     of  Proposition \ref{prop:anybranchultram}   
     is to deduce from this fact that (\ref{point:symint}) is satisfied by \emph{all} 
     quadruples.

Given Proposition \ref{prop:anybranchultram}, 
it is natural to try to relate the rooted trees associated 
to the ultrametrics obtained by varying $L$ among the branches of $\cF$. 
By looking at quadruples of branches from $\cF$, one may prove using 
Propositions \ref{prop:reform} and \ref{prop:anybranchultram} that:

\begin{proposition}  \label{prop:injrel}
   Let $\cF$ be a finite set of branches on $S$. Consider an embedded resolution of 
   $\cF$ such that the map associating to each branch $A$ of $\cF$ the component 
   $E_a$ of the exceptional divisor intersected by its strict transform is injective. Denote 
   by $\cF^{\pi}$ the set of divisorial valuations $a$ appearing in this way. 
   Then:
       \begin{enumerate}
            \item  The function $u_L$ is an ultrametric on $\cF \: \setminus \: \{L \}$ 
                for some branch $L \in \cF$ if and only if the angular distance $\rho$ 
                satisfies the four points condition in restriction to the set $\cF^{\pi}$. 
            \item  Assume that the previous condition is satisfied. Then the rooted tree associated 
               to $u_L$ on $\cF \: \setminus \: \{L \}$ is isomorphic to the tree hull of 
               $(\cF^{\pi}, \rho)$ by an isomorphism which sends each end marked by a 
               branch $A$ of $\cF$ to the vertex $a$ of the tree hull. 
       \end{enumerate}
\end{proposition}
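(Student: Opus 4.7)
The plan is to prove the two parts separately, using Proposition \ref{prop:reform} as the key bridge between the functions $u_L$ and the angular distance $\rho$, with Proposition \ref{prop:anybranchultram} handling the symmetry in the choice of $L$, and Proposition \ref{prop:satisf4pt} providing the tree hull in Part (2).

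For Part (1), I would proceed as follows. Proposition \ref{prop:reform}(4) identifies, for any quadruple $\{L, A, B, C\}$ of distinct branches of $\cF$, the ultrametric inequality $u_L(A, B) \leq \max\{u_L(A, C), u_L(B, C)\}$ with the four points condition for $\rho$ on $\{l, a, b, c\}$. The reverse implication of Part (1) is then immediate: if $\rho$ satisfies the four points condition on $\cF^{\pi}$, then for every $L \in \cF$ and every triple $\{A, B, C\} \subset \cF \setminus \{L\}$, the corresponding ultrametric inequality for $u_L$ follows. For the forward implication, one must upgrade ``four points condition on quadruples containing $l$'' to ``four points condition on arbitrary quadruples of $\cF^{\pi}$''; for this I would invoke Proposition \ref{prop:anybranchultram}: if $u_L$ is ultrametric for some $L$, then $u_M$ is ultrametric for every $M \in \cF$, so any quadruple of $\cF^{\pi}$ plays the role of $\{l, a, b, c\}$ for a suitable choice of $M$, and Proposition \ref{prop:reform}(4) yields the four points condition on that quadruple.

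For Part (2), assume the four points condition holds. Proposition \ref{prop:satisf4pt} supplies the tree hull $T$ of $(\cF^{\pi}, \rho)$: an $\cF^{\pi}$-tree equipped with a length function $\ell$ realizing $\rho$ as tree distance. Root $T$ at the vertex $l$, producing the partial order $\preceq_l$ on $V(T)$. The central computation is the identity
\[
  -\log u_L(A, B) \;=\; \rho(l, a \wedge_l b) \;-\; \log \langle l, l \rangle,
\]
obtained by unfolding the definition of $u_L$, substituting $\log \langle x, y \rangle = \tfrac{1}{2}(\log \langle x, x \rangle + \log \langle y, y \rangle - \rho(x, y))$ from Definition \ref{def:rho}, and simplifying using the tree-metric identity $\rho(l, a) + \rho(l, b) - \rho(a, b) = 2\, \rho(l, a \wedge_l b)$, which holds in $T$ since the path from $a$ to $b$ in $T$ passes through $a \wedge_l b$. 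Because $\rho(l, \cdot)$ is strictly increasing along $\preceq_l$, this identity shows that $u_L(A, B)$ is determined by the vertex $a \wedge_l b$ alone and that distinct vertices of $T$ yield distinct values of $u_L$. The hierarchy of closed balls of $(\cF \setminus \{L\}, u_L)$ is thus in bijection with $V(T)$ ordered by $\preceq_l$, each closed ball corresponding to the subset of leaves descending from the associated vertex of $T$. This establishes the desired isomorphism of rooted trees, under which each end marked by $A \in \cF \setminus \{L\}$ is sent to $a$ and the root, marked by $L$, is sent to $l$.

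The principal obstacle is the identity displayed above together with the verification that the induced match between closed balls and vertices of $T$ is a genuine isomorphism of rooted trees. While the algebraic substitution is routine, a delicate point arises when $l$ happens to separate some pair $\{a, b\} \subset \cF^{\pi}$ in the dual graph $\Gamma(\pi)$, which by Theorem \ref{thm:isdist} corresponds to the equality $\rho(l, a) + \rho(l, b) = \rho(a, b)$ and forces $l$ to have valency at least two in $T$. Handling this case cleanly requires stating the isomorphism so that the end of the hierarchical tree labeled by $L$ is identified with $l$ regardless of its valency in $T$, rather than with an artificially inserted degree-one ancestor.
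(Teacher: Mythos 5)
Your Part (1) is exactly the argument the paper intends: Proposition \ref{prop:reform} converts the ultrametric inequality for quadruples containing $L$ into the four points condition for $\rho$, and Proposition \ref{prop:anybranchultram} lets you move the reference branch so as to reach every quadruple of $\cF^{\pi}$. For Part (2) you take a more explicit route than the paper (which only gestures at comparing shapes of quadruples via Propositions \ref{prop:reform} and \ref{prop:satisf4pt}), and your identity $-\log u_L(A,B)=\rho(l,a\wedge_l b)-\log\langle l,l\rangle$ is correct and is the right tool: it exhibits $u_L$ as $\lambda(a\wedge_l b)$ for a function $\lambda$ strictly decreasing along each geodesic issued from $l$, which is precisely the format of Proposition \ref{prop:convultram}.

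The gap is in the sentence ``the hierarchy of closed balls of $(\cF\setminus\{L\},u_L)$ is thus in bijection with $V(T)$'', and it is not only the root-side degeneracy you flag. A marked vertex $a$ with $A\neq L$ may be an \emph{internal} vertex of the tree hull $T$: by Theorem \ref{thm:isdist} this happens exactly when $E_a$ separates two other members of $\cF^{\pi}$ in $\Gamma(\pi)$, for instance when the dual graph is a chain with $E_b$ between $E_l$ and $E_a$. In that case the singleton $\{B\}$ and the strictly larger closed ball consisting of all $A'$ with $b\preceq_l a'$ are two distinct balls that would both have to correspond to the vertex $b$, so the hierarchical tree of $u_L$ (in which every element of $\cF\setminus\{L\}$ is a leaf, by Definitions \ref{def:treehierarchy} and \ref{def:hiertree}) has strictly more vertices than $T$ and cannot be isomorphic to it as written. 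The repair, which is also how the proposition actually feeds into Theorem \ref{thm:ultrambv}, is to compare the rooted tree of $u_L$ with the tree obtained from the tree hull by attaching at each $a$ the pendant edge carried by the strict transform of $A$ (the arrowhead vertex of $\Gamma(\pi^{-1}\cF)$); with that convention your depth function argument closes. A minor further slip: ``distinct vertices of $T$ yield distinct values of $u_L$'' is false (two branch points of $T$ may be equidistant from $l$), but your argument does not need it, only the strict monotonicity of $\lambda$ along each $[l,a]$.
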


Proposition \ref{prop:injrel} implies readily that Theorem \ref{thm:ultrambv} 
is a consequence of the following fact (see \cite[Cor. 1.40]{GBGPPPR 19}):

\begin{theorem}   \label{thm:reformangdist}
    Let $S$ be a normal surface singularity. Consider a set $\cG$ of vertices of the dual 
    graph $\Gamma$ of a good resolution $\pi: S^{\pi} \to S$ of $S$. Assume that the convex hull 
     $\mathrm{Conv}_{\cB\cV(\Gamma)}(\cG)$ of $\cG$ in the brick-vertex tree of $\Gamma$ 
     does not contain brick-vertices 
     of valency at least $4$ in $\mathrm{Conv}_{\cB\cV(\Gamma)}(\cG)$. Then 
     the restriction of the angular distance $\rho$ to $\cG$ satisfies the four points condition and 
     the associated tree hull is isomorphic as a $\cG$-tree to $\mathrm{Conv}_{\cB\cV(\Gamma)}(\cG)$.
\end{theorem}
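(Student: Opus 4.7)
The plan is to construct, from the convex hull $T := \mathrm{Conv}_{\cB\cV(\Gamma)}(\cG)$ viewed as an abstract tree, a non-negative length function $\ell$ on its edges such that the induced tree metric $d_T$ satisfies $d_T(a,b) = \rho(a,b)$ for all $a,b \in \cG$. Once this is achieved, Proposition \ref{prop:satisf4pt} applied to $(\cG, \rho|_{\cG})$ immediately yields the four points condition, and the tree hull is recovered from $T$ by contracting any non-$\cG$ vertex of valency $\leq 2$, identifying it with $T$ as a $\cG$-tree.

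First I would establish the following additivity lemma: for any $a,b \in V(\Gamma)$, if $a = w_0, w_1, \dots, w_n = b$ are the regular vertices of $\Gamma$ appearing in order along the unique path from $\overline{a}$ to $\overline{b}$ in $\cB\cV(\Gamma)$, then
\[
\rho(a,b) \; = \; \sum_{i=0}^{n-1} \rho(w_i, w_{i+1}).
\]
Indeed, for each $0 < i < n$ the vertex $\overline{w_i}$ is an interior point of the $\cB\cV(\Gamma)$-path from $\overline{a}$ to $\overline{b}$, so Proposition \ref{prop:importbv} ensures that $w_i$ separates $a$ from $b$ in $\Gamma$, and Theorem \ref{thm:isdist} yields $\rho(a,b) = \rho(a,w_i) + \rho(w_i,b)$; iterating proves the claim.

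Next I would assign $\ell$ edgewise on $T$. Since brick-vertices are never elements of $\cG$, any brick-vertex appearing in $T$ lies on some $\cG$-to-$\cG$ path, so has valency $\geq 2$ in $T$; combined with the hypothesis this forces its valency in $T$ to be exactly $2$ or $3$. To a bridge edge $[\overline{v_1}, \overline{v_2}]$ I set $\ell := \rho(v_1, v_2)$. At a valency-$2$ brick-vertex $\overline{b}$ with regular $T$-neighbors $\overline{v_1}, \overline{v_2}$, I split $\rho(v_1, v_2)$ arbitrarily (say equally) between the two incident edges. At a valency-$3$ brick-vertex $\overline{b}$ with regular $T$-neighbors $\overline{v_1}, \overline{v_2}, \overline{v_3}$, I set
\[
\ell([\overline{v_j}, \overline{b}]) \; := \; \tfrac{1}{2}\bigl(\rho(v_j, v_{j+1}) + \rho(v_j, v_{j-1}) - \rho(v_{j-1}, v_{j+1})\bigr),
\]
with indices taken cyclically modulo $3$; the triangle inequality for the distance $\rho$ (Theorem \ref{thm:isdist}) guarantees non-negativity, while a direct check gives $\ell([\overline{v_j}, \overline{b}]) + \ell([\overline{v_k}, \overline{b}]) = \rho(v_j, v_k)$ for all $j \neq k$. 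Consequently, along any $T$-path from $a \in \cG$ to $b \in \cG$, the accumulated $\ell$-length between two consecutive regular vertices $w_i, w_{i+1}$ equals $\rho(w_i, w_{i+1})$, and the additivity lemma then yields $d_T(a,b) = \rho(a,b)$.

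The principal obstacle is the handling of valency-$3$ brick-vertices, where one must simultaneously realize the three pairwise $\rho$-distances among $v_1, v_2, v_3$ by a single tripod; this amounts exactly to the triangle inequality for $\rho$ at these three vertices, which is available here because $\rho$ is known to be a genuine distance. If one were to permit brick-vertices of valency $\geq 4$ in $T$, one would instead need the full four points condition among four neighbors of a single brick-vertex, which is not provided by the general theory of the angular distance — precisely the degeneracy that the hypothesis of the theorem is designed to rule out.
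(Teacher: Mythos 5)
Your construction is sound and reaches the statement by a genuinely different route from the paper. The paper reduces the theorem to the purely graph-theoretic Theorem \ref{thm:BVuse} and proves that result by checking, quadruple by quadruple, that the pattern of equalities among the triangle inequalities and four points conditions is the one dictated by the shape of the convex hull in $\cB\cV(\Gamma)$ (the strategy illustrated in Example \ref{ex:graphthm} and carried out in \cite[Thm.~1.38]{GBGPPPR 19}). You instead exhibit an explicit edge-length function on $\mathrm{Conv}_{\cB\cV(\Gamma)}(\cG)$ realizing $\rho|_{\cG}$ as a tree metric and then invoke the easy direction of Proposition \ref{prop:satisf4pt}. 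Your additivity lemma is correct: an interior regular vertex of the $\cB\cV(\Gamma)$-path separates the endpoints in $\Gamma$ by Proposition \ref{prop:importbv}, so Theorem \ref{thm:isdist} applies and iterates along the path. The local assignments at bridges and at brick-vertices of valency $2$ or $3$ are consistent with that decomposition, and you correctly identify why the hypothesis excludes valency $\geq 4$ (a tripod always realizes a three-point metric, while a star with four or more legs requires the four points condition among the neighbors). Since your argument uses only the two inputs of Theorem \ref{thm:BVuse} --- that $\delta$ is a distance and the separation/additivity equivalence --- it actually yields an alternative proof of that graph-theoretic statement; what it buys is a direct construction of the tree hull rather than an indirect identification through the shapes of quadruples.

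One point must be added for the second half of the conclusion, the isomorphism of the tree hull with the convex hull. You only check that the tripod legs at a valency-$3$ brick-vertex are \emph{non-negative}, but the uniqueness in Proposition \ref{prop:satisf4pt} identifies the tree hull with your tree only if no edge of the reduced $\cG$-tree has length zero: a zero-length leg would collapse in the tree hull and change its combinatorial type. Strict positivity does hold, but it uses the converse direction of Theorem \ref{thm:isdist}: if $\ell([\overline{v_j},\overline{b}])=0$ then $\rho(v_{j-1},v_{j+1})=\rho(v_{j-1},v_j)+\rho(v_j,v_{j+1})$, so $v_j$ would separate $v_{j-1}$ from $v_{j+1}$ in $\Gamma$; this is impossible because $v_{j-1}$, $v_j$, $v_{j+1}$ are all vertices of the brick $b$, which is inseparable and not a bridge, so $b\setminus\{v_j\}$ remains connected and contains both $v_{j-1}$ and $v_{j+1}$. (Bridge edges and the two halves at a valency-$2$ brick-vertex are positive simply because $\rho$ is a distance, and every leaf of the convex hull lies in $\overline{\cG}$, so suppression only concerns valency-$2$ vertices.) With this observation added, your proof is complete.
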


In turn, Theorem \ref{thm:reformangdist} is a consequence of a graph-theoretic result 
presented in the next section (see Theorem \ref{thm:BVuse}).

\section{A theorem of graph theory}
\label{sec:thmgraph}
\medskip

In this final section we state a pure graph-theoretical theorem, which implies 
Theorem \ref{thm:reformangdist} of the previous section. As we explained before, 
that theorem implies in turn our strongest generalization of P\l oski's theorem, 
that is, Theorem \ref{thm:ultrambv}. 
\medskip

Theorem \ref{thm:reformangdist} is a consequence of Theorem \ref{thm:isdist}  and 
of the following graph-theoretic result: 

\begin{theorem}   \label{thm:BVuse}
    Let $\Gamma$ be a finite connected graph and $\delta$ be a distance 
    on the set $V(\Gamma)$ of vertices of $\Gamma$, such that
    for every $a,b,c \in V(\Gamma)$, the following properties are equivalent:
         \begin{itemize} 
             \item one has the equality 
                 $\delta(a,b) + \delta(b,c) = \delta(a,c)$;
             \item  $b$ separates $a$ and $c$  in $\Gamma$. 
         \end{itemize}
    Let $X$ be a set of vertices of $\Gamma$ such that the convex hull 
    $\mathrm{Conv}_{\cB\cV(\Gamma)}(X)$ of $X$ in the brick-vertex tree of 
    $\Gamma$ does not contain brick-vertices of valency at least $4$ in 
    $\mathrm{Conv}_{\cB\cV(\Gamma)}(X)$. Then $\delta$ satisfies 
    the $4$ points condition in restriction to $X$ and the tree hull 
    of $(X, \delta)$ is isomorphic to $\mathrm{Conv}_{\cB\cV(\Gamma)}(X)$ 
    as an $X$-tree. 
\end{theorem}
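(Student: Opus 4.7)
The plan is to realise $\delta|_X$ as the path-length metric of a suitable weighted tree, the underlying tree being the convex hull $T := \mathrm{Conv}_{\cB\cV(\Gamma)}(X)$ itself. As a first step I would use Proposition~\ref{prop:importbv} to rewrite the hypothesis on $\delta$ as follows: for every $a,b,c \in V(\Gamma)$, one has $\delta(a,b) + \delta(b,c) = \delta(a,c)$ if and only if the unique path from $\overline{a}$ to $\overline{c}$ in the tree $\cB\cV(\Gamma)$ passes through $\overline{b}$. This brings the problem into a tree-theoretic framework and is the link that will feed all telescoping arguments below.

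Next I would define a length function $\ell$ on the edges of $T$. Each edge is either a bridge-edge $[\overline{u}, \overline{v}]$, to which I assign $\ell(\overline{u}, \overline{v}) := \delta(u, v)$, or a star-edge incident to a brick-vertex $\overline{b}$ of $T$. Such a brick-vertex has neighbours in $T$ of the form $\overline{v_1}, \ldots, \overline{v_k}$ with $v_i \in V(\Gamma)$, and the valency hypothesis yields $k \leq 3$; since brick-vertices never lie in $X$ while leaves of a convex hull always do, one in fact has $k \in \{2, 3\}$. When $k = 3$, I would set
\[
   \ell(\overline{b}, \overline{v_i}) := \tfrac{1}{2}\bigl( \delta(v_i, v_j) + \delta(v_i, v_r) - \delta(v_j, v_r) \bigr)
\]
for $\{i,j,r\} = \{1,2,3\}$, the unique Steiner-type solution of the system $\ell(\overline{b}, \overline{v_i}) + \ell(\overline{b}, \overline{v_j}) = \delta(v_i, v_j)$, whose non-negativity is the triangle inequality for $\delta$. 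When $k = 2$, any decomposition with the same sum property works.

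The heart of the argument is then the path-length identity $\delta(a,b) = \sum_{e} \ell(e)$ for $a, b \in X$, the sum being over the edges of the $T$-path from $\overline{a}$ to $\overline{b}$. Listing the $V(\Gamma)$-vertices met by this path in order, $\overline{a} = \overline{w_0}, \overline{w_1}, \ldots, \overline{w_m} = \overline{b}$, the reformulated hypothesis together with Proposition~\ref{prop:importbv} forces each intermediate $w_i$ to separate $a$ from $b$ in $\Gamma$, and more generally $w_i$ to separate $w_j$ from $w_r$ whenever $j < i < r$; a telescoping then gives $\delta(a,b) = \sum_{i=0}^{m-1} \delta(w_i, w_{i+1})$. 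Since in $\cB\cV(\Gamma)$ brick-vertices are adjacent only to $V(\Gamma)$-vertices, each sub-path between $\overline{w_i}$ and $\overline{w_{i+1}}$ is either a single bridge-edge or a pair of star-edges through one brick-vertex, and in both cases the defining relations of $\ell$ make the sub-path length equal to $\delta(w_i, w_{i+1})$.

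Once this identity is established, Proposition~\ref{prop:satisf4pt} yields both conclusions at once: $\delta|_X$ satisfies the four points condition because it is realised by a weighted tree, and after suppressing from $T$ the vertices of valency $2$ that are not in $X$, the uniqueness part of that proposition identifies the resulting $X$-tree with the tree hull of $(X, \delta)$. The main obstacle lies precisely at the Steiner-type step: a brick-vertex of valency $\geq 4$ in $T$ would produce an overdetermined linear system for the $\ell_i$, solvable only under a restrictive star condition on the $\delta(v_i, v_j)$, which is exactly the situation in which the four points condition may genuinely fail — so the valency hypothesis is consumed at this one point, and the rest of the proof is a consistency check of the bookkeeping.
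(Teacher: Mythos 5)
Your argument is correct, but it takes a genuinely different route from the one the paper sketches. The paper (following \cite[Thm. 1.38]{GBGPPPR 19}) proceeds quadruple by quadruple: for each four-element subset of $X$ it determines, via cut-vertices and telescoping, which of the twelve triangle inequalities and three four-point inequalities are equalities, and then uses the fact that an $X$-tree is characterized by the shapes of the convex hulls of its quadruples (the discussion around Proposition \ref{prop:satisf4pt} and Example \ref{ex:graphthm}). You instead construct a single global length function $\ell$ on $\mathrm{Conv}_{\cB\cV(\Gamma)}(X)$ --- $\delta$ itself on bridge-edges, Gromov products at trivalent brick-vertices --- prove the path-length identity once by telescoping along the chain of $V(\Gamma)$-vertices of each geodesic, and then obtain both conclusions from Proposition \ref{prop:satisf4pt} in one stroke. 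Both arguments rest on the same two pillars (transport of the hypothesis to $\cB\cV(\Gamma)$ via Proposition \ref{prop:importbv}, and additivity of $\delta$ along chains of separating vertices), but your version has the merit of making completely explicit where the valency hypothesis is consumed, namely in the solvability of the Steiner system at each brick-vertex; the paper's version instead localizes the possible failure in the shapes of individual quadruples.

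One detail to nail down for the second conclusion. You verify only \emph{non-negativity} of the star-edge lengths, but the uniqueness part of Proposition \ref{prop:satisf4pt} identifies your suppressed weighted tree with the tree hull only if no edge has length zero (a zero-length edge would be collapsed in the tree hull, changing the combinatorial type). Strict positivity does hold and uses the converse implication of the hypothesis, which your write-up otherwise never invokes: if $\ell(\overline{b}, \overline{v_i}) = 0$ then $\delta(v_i,v_j) + \delta(v_i,v_r) = \delta(v_j,v_r)$, so $v_i$ separates $v_j$ from $v_r$ in $\Gamma$, so $\overline{v_i}$ lies on the path $\overline{v_j}$--$\overline{b}$--$\overline{v_r}$ in $\cB\cV(\Gamma)$, which is absurd; bridge-edges have positive length because $\delta$ is a distance. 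With this one-line addition the proof is complete.
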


The idea of the proof of Theorem \ref{thm:BVuse} is to show that, under the given 
hypotheses, the equalities among the triangle inequalities and four points conditions 
are as described by the brick-vertex tree. It is writtend in a detailed way 
in \cite[Thm. 1.38]{GBGPPPR 19}.

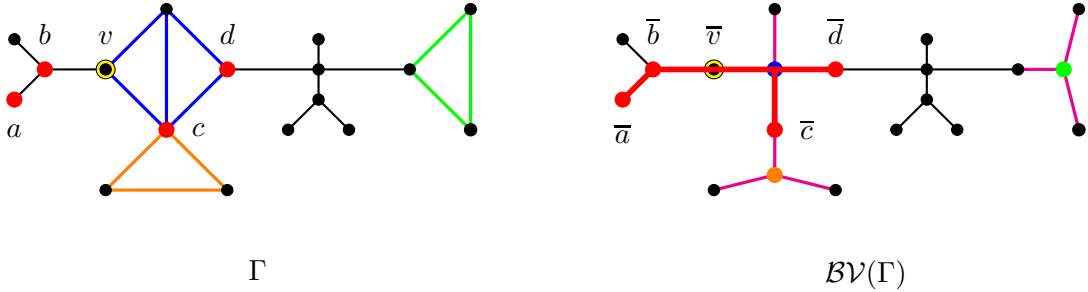
\begin{figure}[h!]
\begin{center}
\begin{tikzpicture}[scale=0.4]

   \draw [-, color=black, thick](0,0) -- (-1,1) ;
 \draw [-, color=black, thick](0,0) -- (-1,-1) ;
  \node[draw,circle, inner sep=1.5pt,color=black, fill=black] at (-1,1){};
  \node[draw,circle, inner sep=2pt,color=red, fill=red] at (-1,-1){};
          \node [below] at (-1,-1.5) {$a$};
 
   \draw [-, color=black, thick](0,0) -- (2,0) ;
   
    \draw [-, color=blue, very thick](4,2) -- (2,0) ;
    \draw [-, color=blue, very thick](4,-2) -- (2,0) ;
    \draw [-, color=blue, very thick](4,-2) -- (4,2) ;
       \draw [-, color=blue, very thick](4,-2) -- (6,0) ;
    \draw [-, color=blue, very thick](6,0) -- (4,2) ;
    
     \draw [-, color=orange, very thick](2,-4) -- (4,-2) ;
    \draw [-, color=orange, very thick](4,-2) -- (6,-4) ;
    \draw [-, color=orange, very thick](2,-4) -- (6,-4) ;
      \node[draw,circle, inner sep=1.5pt,color=black, fill=black] at (4,2){};
  \node[draw,circle, inner sep=2pt,color=red, fill=red] at (4,-2){};
         \node [right] at (4.5,-2) {$c$};
        
  \node[draw,circle, inner sep=2pt,color=red, fill=red] at (0,0){};
          \node [above] at (0,0.5) {$b$};
  
    \node[draw,circle, inner sep=2.5pt,color=black, fill=yellow] at (2,0){};
    \node [above] at (2,0.5) {$v$};
  \node[draw,circle, inner sep=1.5pt,color=black, fill=black] at (2,0){};
    \node[draw,circle, inner sep=1.5pt,color=black, fill=black] at (6,-4){};
    \node[draw,circle, inner sep=1.5pt,color=black, fill=black] at (2,-4){};
    \draw [-, color=black, thick](6,0) -- (9,0) ;
    \draw [-, color=black, thick](9,0) -- (9,1) ;
    \draw [-, color=black, thick](9,-1) -- (9,0) ;
    \draw [-, color=black, thick](9,0) -- (12,0) ;
    
  \node[draw,circle, inner sep=2pt,color=red, fill=red] at (6,0){};
       \node [above] at (6,0.5) {$d$};
  
  \node[draw,circle, inner sep=1.5pt,color=black, fill=black] at (9,0){};
    
      \draw [-, color=black, thick](8,-2) -- (9,-1) ;
    \draw [-, color=black, thick](9,-1) -- (10,-2) ;
  \node[draw,circle, inner sep=1.5pt,color=black, fill=black] at (9,-1){};
  \node[draw,circle, inner sep=1.5pt,color=black, fill=black] at (8,-2){};
  \node[draw,circle, inner sep=1.5pt,color=black,  fill=black] at (10,-2){};
    \node[draw,circle, inner sep=1.5pt,color=black, fill=black] at (9,1){};
    \draw [-, color=green, very thick](12,0) -- (14,-2) ;
    \draw [-, color=green, very thick](12,0) -- (14,2) ;
    \draw [-, color=green, very thick](14,-2) -- (14,2) ;
       \node[draw,circle, inner sep=1.5pt,color=black, fill=black] at (12,0){};
   \node[draw,circle, inner sep=1.5pt,color=black, fill=black] at (14,2){};
  \node[draw,circle, inner sep=1.5pt,color=black, thick, fill=black] at (14,-2){};

  \node [below] at (7,-6) {$\Gamma$};
  

  \begin{scope}[shift={(20,0)}]
  
   \draw [-, color=black, thick](0,0) -- (-1,1) ;
 \draw [-, color=black, thick](0,0) -- (-1,-1) ;
  \node[draw,circle, inner sep=1.5pt,color=black, fill=black] at (-1,1){};
  \node[draw,circle, inner sep=2pt,color=red, fill=red] at (-1,-1){};
          \node [below] at (-1,-1.5) {$\overline{a}$};
 
   \draw [-, color=black, thick](0,0) -- (2,0) ;    
    
    \draw [-, color=magenta, very thick](4,0) -- (2,0) ;
    \draw [-, color=magenta, very thick](4,0) -- (4,-2) ;
     \draw [-, color=magenta, very thick](4,0) -- (4,2) ;  
     \draw [-, color=magenta, very thick](4,0) -- (6,0) ;
     \node[draw,circle, inner sep=2pt,color=blue, fill=blue] at (4,0){};
        
     \draw [-, color=magenta, very thick](4,-3.5) -- (2,-4) ;
    \draw [-, color=magenta, very thick](4,-3.5) -- (4,-2) ;
     \draw [-, color=magenta, very thick](4,-3.5) -- (6,-4) ;
     \node[draw,circle, inner sep=2pt,color=orange, fill=orange] at (4,-3.5){};
    
      \node[draw,circle, inner sep=1.5pt,color=black, fill=black] at (4,2){};
      \node[draw,circle, inner sep=2pt,color=red, fill=red] at (4,-2){};
      \node [right] at (4.5,-2) {$\overline{c}$};
         
  \node[draw,circle, inner sep=2pt,color=red, fill=red] at (0,0){};
          \node [above] at (0,0.5) {$\overline{b}$};
 
          \node[draw,circle, inner sep=2.5pt,color=black, fill=yellow] at (2,0){};
              \node [above] at (2,0.5) {$\overline{v}$};
  \node[draw,circle, inner sep=1.5pt,color=black, fill=black] at (2,0){};
    \node[draw,circle, inner sep=1.5pt,color=black, fill=black] at (6,-4){};
    \node[draw,circle, inner sep=1.5pt,color=black, fill=black] at (2,-4){};
    \draw [-, color=black, thick](6,0) -- (9,0) ;
    \draw [-, color=black,  thick](9,0) -- (9,1) ;
    \draw [-, color=black,  thick](9,-1) -- (9,0) ;
    \draw [-, color=black, thick](9,0) -- (12,0) ;
    
   \node[draw,circle, inner sep=2pt,color=red, fill=red] at (6,0){};
       \node [above] at (6,0.5) {$\overline{d}$};

  \node[draw,circle, inner sep=1.5pt,color=black, fill=black] at (9,0){};
    
    \draw [-, color=black, thick](8,-2) -- (9,-1) ;
    \draw [-, color=black, thick](9,-1) -- (10,-2) ;
    \node[draw,circle, inner sep=1.5pt,color=black, fill=black] at (9,-1){};
  \node[draw,circle, inner sep=1.5pt,color=black, fill=black] at (8,-2){};
  \node[draw,circle, inner sep=1.5pt,color=black,  fill=black] at (10,-2){};
    \node[draw,circle, inner sep=1.5pt,color=black, fill=black] at (9,1){};

     \draw [-, color=magenta, very thick](13.5,0) -- (12,0) ;
    \draw [-, color=magenta, very thick](13.5,0) -- (14,-2) ;
     \draw [-, color=magenta, very thick](13.5,0) -- (14,2) ;
    \node[draw,circle, inner sep=2pt,color=green, fill=green] at (13.5,0){};
    
       \node[draw,circle, inner sep=1.5pt,color=black, fill=black] at (12,0){};
   \node[draw,circle, inner sep=1.5pt,color=black, fill=black] at (14,2){};
  \node[draw,circle, inner sep=1.5pt,color=black, thick, fill=black] at (14,-2){};

     \node [below] at (7,-6) {$\cB\cV(\Gamma)$};
     
      \draw [-, line width=0.75mm, color=red] (-1,-1)--(0,0)--(6,0);
   \draw [-, line width=0.75mm, color=red] (4,0)--(4,-2);

   \end{scope}
 
 \end{tikzpicture}
\end{center}
\caption{A convex hull of four vertices}
\label{fig:Computineq}
\end{figure}

\begin{example}  \label{ex:graphthm}
    Let us consider again the connected graph $\Gamma$ of Example \ref{ex:brickvertreeex}. 
    Look at its vertices $a,b,c,d$ shown on the left of Figure \ref{fig:Computineq}. 
    The corresponding vertices $\overline{a}, \overline{b}, \overline{c}, \overline{d}$ 
    of the brick-vertex tree $\cB \cV(\Gamma)$ are shown on the right side of the figure. 
    Denoting $X := \{a,b,c,d\}$, the convex hull $\mbox{Conv}_{\cB\cV(\Gamma)}(X)$  
    is also drawn on the right side using thick red segments. 
     We see that the hypothesis of Theorem \ref{thm:BVuse} about the valencies of 
     brick-vertices is satisfied, as the only 
     brick-vertex contained in $\mbox{Conv}_{\cB\cV(\Gamma)}(X)$ is of valency $3$ 
     in this convex hull. 
     
     As shown by the $F$-shape of $\mbox{Conv}_{\cB\cV(\Gamma)}(X)$, one 
     should have the following equalities and inequalities in the four points conditions 
     concerning $X$:
          \begin{equation} \label{eq:eqineq} 
                \delta(a,d) + \delta(b,c) =   \delta(a,c) + \delta(b,d) >  \delta(a,b) + \delta(c,d).  
          \end{equation}
     Let us prove that this is indeed the case. Consider the cut vertex $v$ of $\Gamma$  shown 
     on the left side of Figure \ref{fig:Computineq}. As it separates $a$ from $d$, 
     we have the equality $\delta(a,d) = \delta(a,v) + \delta(v,d)$. As $v$ 
     does not separate $a$ from $b$, we have the strict inequality 
     $\delta(a,v) + \delta(b,v) >  \delta(a,b)$. Using similar equalities and inequalities, we get:
       \[ \begin{array}{l} 
                \delta(a,d) + \delta(b,c) =  \\
                = (\delta(a,v) + \delta(v,d)) + ( \delta(b,v) + \delta(v,c)  )  = \\
                =  (\delta(a,v) + \delta(v,c)) + ( \delta(b,v) + \delta(v,d)  )  = \\
                =   \delta(a,c) + \delta(b,d) =  \\
                =  (\delta(a,v) + \delta(b,v)) + ( \delta(v,d) + \delta(v,c)  )  = \\
                >  \delta(a,b) + \delta(c,d). 
           \end{array} \]
     The (in)equalities (\ref{eq:eqineq}) are proved. 
     
     One proves similarly the triangle equalities $\delta(a,b) + \delta(b,c) =  \delta(a,c)$, 
     $\delta(a,b) + \delta(b,d) =  \delta(a,d)$ and the fact that one has no equality 
     among the triangle inequalities concerning the triple $\{a,c,d\}$, which shows that 
     the tree hull of $(X, \delta)$ has indeed an $F$-shape, with the vertices 
     $a,b,c,d$ placed as in $\mbox{Conv}_{\cB\cV(\Gamma)}(X)$.
\end{example}

\printindex

\medskip
\end{document}